\newcommand{\gmtw}[1]{\langle #1 \rangle}
\newcommand{\Aone}{{\mathbb{A}^1}}
\newcommand{\Gm}{{\mathbb{G}_m}}
\newcommand{\Gmp}[1]{{\mathbb{G}_m^{\wedge #1}}}
\newcommand{\id}{\operatorname{id}}
\newcommand{\ZZ}{\mathbb{Z}}
\newcommand{\eff}{{\text{eff}}}
\newcommand{\veff}{{\text{veff}}}
\newcommand{\DM}{\mathbf{DM}}
\newcommand{\SH}{\mathbf{SH}}
\newcommand{\Hom}{\operatorname{Hom}}
\newcommand{\hocolim}{\operatorname{hocolim}}
\newcommand{\ul}{\underline}
\newcommand{\heart}{\heartsuit}
\newcommand{\KO}{{KO}}
\newcommand{\iso}{\cong}
\newcommand{\wequi}{\simeq}
\newtheorem{prop}{Proposition}
\newtheorem{lemm}[prop]{Lemma}
\newtheorem{thm}[prop]{Theorem}
\newtheorem*{thm*}{Theorem}
\title{The Generalized Slices of Hermitian K-Theory} 
\author{Tom Bachmann}
\begin{document}

\maketitle

\begin{abstract}
We compute the generalized slices (as defined by Spitzweck-{\O}stv{\ae}r) of
the motivic spectrum $\KO$ (representing hermitian $K$-theory) in terms of
motivic cohomology and (a
version of) generalized motivic cohomology, obtaining good agreement with the
situation in classical topology and the results predicted by
Markett-Schlichting. As an application, we compute the homotopy sheaves of (this
version of) generalized motivic cohomology, which establishes a version of a
conjecture of Morel.
\end{abstract}

\section{Introduction}

$K$-theory was invented by algebraic geometers and taken up by topologists. As a
result of Bott-periodicity, the homotopy groups of the (topological) complex
$K$-theory spectrum $KU$ are alternatingly $\ZZ$ and $0$. Consequently the (sped
up) Postnikov tower yields a filtration of $KU$ with layers all equal to the
Eilenberg-MacLane spectrum $H\ZZ$ (which is also the zeroth Postnikov-layer of the
topological sphere spectrum). From this one obtains the Atiyah-Hirzebruch
spectral sequence, which has the singular cohomology of a space $X$ on the $E_2$
page and converges to the (higher) topological $K$-theory of $X$.

Much research has been put into replicating this picture in algebraic geometry.
In its earliest form, this meant trying to find a cohomology theory for
algebraic varieties called \emph{motivic cohomology} which is related via a
spectral sequence to higher algebraic $K$-theory. There is now a very
satisfactory version of this picture. The motivic analog of the stable homotopy
category $\SH$ is the motivic stable homotopy category $\SH(k)$ \cite[Section
5]{morel2004motivic-pi0}. Following
Voevodsky \cite[Section 2]{voevodsky-slice-filtration}, this category is
filtered by effectivity, yielding a kind
of $\Gm$-Postnikov tower called the \emph{slice filtration} and denoted
\[ \dots \to f_{n+1} E \to f_n E \to f_{n-1} E \to \dots \to E. \]
The cofibres $f_{n+1} E \to f_nE \to s_n E$ are called the \emph{slices} of $E$,
and should be thought of as one kind of replacement of the (stable) homotopy
groups from
classical topology in motivic homotopy theory.

In $\SH(k)$ there are (at least) two special objects (for us): the sphere
spectrum $S \in \SH(k)$ which is the unit of the symmetric monoidal structure,
and the algebraic $K$-theory spectrum $KGL \in \SH(k)$ representing algebraic
$K$-theory. One may show that up to twisting, all the slices of $KGL$ are
isomorphic, and in fact isomorphic to the zero slice of $S$ \cite[Sections 6.4
and 9]{levine2008homotopy}.
Putting $H_\mu \ZZ = s_0 S$, this spectrum can be used to \emph{define} motivic
cohomology, and then the sought-after picture is complete.

Nonetheless there are some indications that the slice filtration is not quite
right in certain situations. We give three examples.
(1) We have said before that the homotopy groups of
$KU$ are alternatingly given by $\ZZ$ and $0$. Thus in order to obtain a
filtration in which all the layers are given by $H\ZZ$, one has to ``speed up''
the Postnikov filtration by slicing ``with respect to $S^2$ instead of $S^1$''.
Since the slice filtration is manifestly obtained by slicing with respect to
$\Gm$ which is (at best) considered an analogue of $S^1$, and yet the layers are
already given at double speed, something seems amiss. (2) In classical topology
there is another version of $K$-theory, namely the $K$-theory of real (not
complex) vector bundles, denoted $KO$. There is also Bott-periodicity, this time
resulting in the computation that the homotopy groups of $KO$ are given by $\ZZ,
\ZZ/2, \ZZ/2, 0, \ZZ, 0, 0, 0$ and then repeating periodically. There is an
analogue of topological $KO$ in algebraic geometry, namely \emph{hermitian
$K$-theory} \cite{hornbostel2005a1} and (also) denoted $\KO \in \SH(k)$. It
satisfies an
appropriate form of Bott periodicity, but this is not captured accurately by its
slices, which are also very different from the topological analog
\cite{rondigs2013slices}. (3) The slice filtration does not always
\emph{converge}. Thus just considering slices is not enough, for example, to
determine if a morphism of spectra is an isomorphism.

Problem (3) has lead Spitzweck-{\O}stv{\ae}r \cite{spitzweck2012motivic} to
define a refined version of the effectivity condition yielding the slice
filtration which they call being ``very effective''. In this article we shall
argue that their filtration also solves issues (1) and (2).

To explain the ideas, recall that the category $\SH(k)^\eff$ is the localising
(so triangulated!)
subcategory generated by objects of the form $\Sigma^\infty X_+$ for $X \in
Sm(k)$ (i.e. no desuspension by $\Gm$). Then one defines $\SH(k)^\eff(n) =
\SH(k) \wedge T^{\wedge n}$ and for $E \in \SH(k)$ the $n$-effective cover $f_n
E \in \SH(k)^\eff(n)$ is the universal object mapping to $E$. (Note that since
$\SH(k)^\eff$ is triangulated, we have $\SH(k)^\eff \wedge T^{\wedge n} = \SH(k)^\eff
\wedge \Gmp{n}$.) In contrast, Spitzweck-{\O}stv{\ae}r define the
subcategory of very effective spectra $\SH(k)^\veff$ to be the subcategory
generated under homotopy colimits and extensions
by $\Sigma^\infty X_+ \wedge S^n$ where $X \in
Sm(k)$ and $n \ge 0$. This subcategory is \emph{not} triangulated! As before we
put $\SH(k)^\veff(n) = \SH(k)^\veff \wedge T^{\wedge n}$. (Note that now,
crucially, $\SH(k)^\veff \wedge T^{\wedge n} \ne \SH(k)^\veff \wedge \Gmp{n}$.)
Then as before the very $n$-effective cover $\tilde{f}_n E \in
\SH(k)^\veff(n)$ is the universal object mapping to $E$. The cofibres
$\tilde{f}_{n+1} E \to \tilde{f}_n E \to \tilde{s}_n E$ are called the
\emph{generalized slices} of $E$.

As pointed out by Spitzweck-{\O}stv{\ae}r, the connectivity of $\tilde{f}_n E$
\emph{in the homotopy $t$-structure} increases with $n$, so the generalized
slice filtration automatically converges. Moreover it is easy to see that
$\tilde{f}_n KGL = f_n KGL$ (i.e. the $n$-effective cover of $KGL$ is
``accidentally'' already very $n$-effective) and thus $\tilde{s}_n KGL =
s_n(KGL)$. This explains how the generalized slice filtration solves problem
(1): we see that the ``$\Gm$-slices'' (i.e. ordinary slices) of $KGL$ agree ``by
accident'' with the ``$T$-slices'' (i.e. generalized slices). But note that $T$
is an analogue of $S^2$, explaining the double-speed convergence.

The main point of this article is that the generalized slices of $\KO$ can be
computed, and have a form which is very similar to the classical analogue, thus
solving problem (2). Of
course this leads to Atiyah-Hirzebruch type spectral sequences for Hermitian
$K$-theory. Heuristically, the generalized slices of $\KO$ are (supposed to be) like the
$S^2$-Postnikov layers of the topological spectrum $\KO$. We thus expect that
they are $4$-periodic (up to twist). Moreover we expect that $\tilde{s}_i$ for
$i \equiv 1, 2, 3 \pmod{4}$ should just ``accidentally'' be ordinary zero-slices
(corresponding to the fact that $\pi_i KO = 0$ for $i = 3, 5, 7$), whereas
$\tilde{s}_0 \KO$ should be an extension of two objects (corresponding to $\pi_1
KO \ne 0 \ne \pi_0 KO$). This is indeed the case:
\begin{thm*}[(see Theorem \ref{thm:main})]
The generalized slices of Hermitian $K$-theory are given as follows:
\begin{equation*}
\tilde{s}_n \KO \wequi T^{\wedge n} \wedge
  \begin{cases}
  \tilde{s}_0(\KO) & n \equiv 0 \pmod{4} \\
  H_\mu \ZZ/2      & n \equiv 1 \pmod{4} \\
  H_\mu \ZZ        & n \equiv 2 \pmod{4} \\
  0                & n \equiv 3 \pmod{4}
  \end{cases}
\end{equation*}
\end{thm*}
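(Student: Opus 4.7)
My strategy combines the motivic Wood cofiber sequence, hermitian Bott periodicity, and the well-understood effective slices of $KGL$. The starting point is the motivic Wood fiber sequence $\Sigma^{1,1}\KO \xrightarrow{\eta} \KO \to KGL$, rotated to the form $\KO \to KGL \to T \wedge \KO$. Hermitian Bott periodicity $T^{\wedge 4} \wedge \KO \simeq \KO$ yields $\tilde{s}_{n+4}\KO \simeq T^{\wedge 4} \wedge \tilde{s}_n\KO$, so it suffices to pin down $\tilde{s}_n\KO$ for $n \in \{0,1,2,3\}$; the case $n=0$ defines the spectrum we are calling generalized motivic cohomology, to be identified elsewhere in the paper.

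The engine of the computation is obtained by applying the coreflection $\tilde{f}_n$ to the Wood fiber sequence. Because $\tilde{f}_n$ is right adjoint to the inclusion $\SH(k)^\veff(n)\hookrightarrow\SH(k)$ it preserves fibers; using the adjunction identity $\tilde{f}_n(T\wedge X)\simeq T\wedge\tilde{f}_{n-1}(X)$ together with the identification $\tilde{f}_n KGL\simeq f_n KGL$ recalled in the introduction, this gives the recursive formula
\[ \tilde{f}_n\KO \simeq \mathrm{fib}\bigl(f_n KGL\to T\wedge\tilde{f}_{n-1}\KO\bigr). \]
Comparing adjacent stages via the octahedral axiom and passing to cofibers produces, for $n \geq 1$, a cofiber sequence of generalized slices
\[ \tilde{s}_n\KO \to s_n KGL \to T\wedge\tilde{s}_{n-1}\KO, \]
in which $s_n KGL \simeq T^{\wedge n}\wedge H_\mu\ZZ$ is completely explicit. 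This is the recursion that drives the entire computation.

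The main obstacle is identifying the connecting map $T^{\wedge n}\wedge H_\mu\ZZ \to T\wedge\tilde{s}_{n-1}\KO$ at each step $n=1,2,3$, since its fiber is precisely the sought-after $\tilde{s}_n\KO$. Once $\tilde{s}_{n-1}\KO$ is under control, the possible maps are classified by low-degree motivic cohomology operations between $T$-shifted copies of $H_\mu\ZZ$ and $H_\mu\ZZ/2$ --- essentially multiplication by $2$, reduction mod $2$, and the Bockstein --- and I would pin them down by combining the compatibility of the Wood sequence with $\eta$-multiplication on $\KO$, comparison with the topological Wood cofiber sequence under complex realization, and the internal consistency forced by Bott periodicity. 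In particular, the requirement that the recursion close at $n=4$ with $\tilde{s}_4\KO \simeq T^{\wedge 4}\wedge\tilde{s}_0\KO$ rigidly constrains the maps and in particular forces $\tilde{s}_3\KO = 0$. Successively reading off fibers then yields $\tilde{s}_1\KO\simeq T\wedge H_\mu\ZZ/2$, $\tilde{s}_2\KO\simeq T^{\wedge 2}\wedge H_\mu\ZZ$, and $\tilde{s}_3\KO\simeq 0$; Bott periodicity extends these identifications to all residues modulo $4$, completing the proof.
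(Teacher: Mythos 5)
Your overall strategy --- Wood triangle plus Bott periodicity plus known slices of $KGL$ --- is the same as the paper's, but there is a genuine gap at the very first technical step: $\tilde{f}_n$ does \emph{not} preserve fiber sequences. The subcategory $\SH(k)^\veff(n)$ is not a stable (triangulated) subcategory, only closed under colimits and extensions, so although $\tilde{r}_n$ is a right adjoint, the inclusion $\tilde{i}_n$ fails to preserve limits and the composite $\tilde{f}_n = \tilde{i}_n\tilde{r}_n$ is not exact. Concretely, by Lemma \ref{lemm:fn-trunc} one has $\tilde{f}_n E \wequi f_n(E_{\ge n})$; the slice functor $f_n$ is triangulated, but truncation $(\bullet)_{\ge n}$ sends a triangle $X\to Y\to Z$ to a triangle only when $\ul{\pi}_n(Y) \to \ul{\pi}_n(Z)$ is an epimorphism (Lemma \ref{lemm:triag-trunc}). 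Your recursive formula $\tilde{f}_n\KO \simeq \mathrm{fib}\bigl(f_nKGL\to T\wedge\tilde{f}_{n-1}\KO\bigr)$ and the resulting ``cofiber sequence of generalized slices'' therefore do not hold in general, and indeed the latter already fails at $n=1$: the cofiber of any map $T\wedge H_\mu\ZZ/2 \to T\wedge H_\mu\ZZ$ cannot be $T\wedge\tilde{s}_0\KO$, since $\tilde{s}_0\KO$ is an extension of $\tilde{H}\ZZ$ (not $H_\mu\ZZ$) by $H_\mu\ZZ/2[1]$. The paper instead applies the exact functor $f_0$ to suitable $T$-twists of the Wood triangle, checks the $\pi_0$-surjectivity hypothesis of Lemma \ref{lemm:triag-trunc} case by case (sometimes after rotating the triangle), and only then truncates.

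A second, independent gap is the absence of an anchor for the recursion. Even if the inductive cofiber sequences held, knowing $\tilde{s}_0\KO$ only as an abstract object does not determine the connecting maps, and ``consistency with Bott periodicity'' is not by itself a rigidity argument: one needs an explicit computation to start from. The paper supplies this via Lemma \ref{lemm:s0-KQH}, which identifies $s_0(\KO^{[2]}_{\ge 0}) \wequi H_\mu\ZZ$ using the geometric representability of symplectic $K$-theory by quaternionic Grassmannians (Panin--Walter, Schlichting--Tripathi) together with Voevodsky's cell-structure argument showing $\Sigma^\infty_s HGr(m,n)$ is $1$-effective. This geometric input is essential and is what actually pins down the otherwise-undetermined boundary maps; your proposal would need to import it (or an equivalent) before the rest of the computation can get off the ground.
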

What about the
``conglomerate'' $\tilde{s}_0 \KO$? We offer two ways of decomposing it, either
using the effectivity (slice) filtration or using the homotopy $t$-structure. The
relevant triangles are
\[ H_\mu \ZZ/2[1] \to \tilde{s}_0 \KO \to \tilde{H} \ZZ \]
\[ H_W \ZZ \wedge \Gm \to \tilde{s}_0 \KO \to H_\mu \ZZ. \]
See Lemma \ref{lemm:s0-decomp} and Theorem \ref{thm:main} again.
Here $\tilde{H} \ZZ$ is a spectrum which we call \emph{generalized motivic
cohomology}, and $H_W \ZZ$ is a spectrum which we call \emph{Witt-motivic
cohomology}. They can be characterised abstractly as the effective covers of
certain objects in the heart of the homotopy $t$-structure on $\SH(k)$.

The boundary maps in the above two triangles are very interesting and will be
subject of further investigation. Also the computation of generalized slices of
other spectra is an interesting topic which we shall take up in future work.

\paragraph{Relationship to Other Works.}

All our computations are done abstractly in the motivic homotopy category. This
is not really satisfactory, since in general it is essentially impossible to
compute cohomology with coefficients in some abstract spectrum. For the motivic
spectral sequence, there is a parallel and much more computational story to the
one we have outlined above: Voevodsky has defined motivic cohomology via a
category $\DM(k)$ which is reasonably computable \cite{lecture-notes-mot-cohom},
and in fact
motivic cohomology in this sense coincides with motivic cohomology in the sense
of higher Chow groups \cite[Theorem 19.1]{lecture-notes-mot-cohom}
which is certainly very explicit. Grayson
has defined an explicit spectral sequence converging to algebraic
$K$-theory \cite[Section 5]{grayson2005motivic}. Work of Voevodsky
\cite{voevodsky2003zero} and Levine \cite{levine2008homotopy}
shows that the explicit definitions of motivic cohomology mentioned above agree
with the abstract definition $H_\mu \ZZ = s_0 S$. Work of Suslin
\cite{suslin2003grayson}
shows that the Grayson spectral sequence has layers given by the explicit form
of motivic cohomology, which by what we just said is the same as the abstract
form. Work of Garkusha-Panin \cite{garkusha2012motivic} shows that the abstract and explicit
motivic spectral sequences agree.

A similar picture is expected for Hermitian $K$-theory. Calmès-Fasel
\cite{calmes2014finite} have defined a variant $\widetilde{\DM}(k)$ of $\DM(k)$ and
an associated theory $\tilde{H}' \ZZ$ which they call generalized motivic
cohomology. Markett-Schlichting [in preparation]
have defined a version of the
Grayson filtration for Hermitian $K$-theory and they hope to show that the
layers are of the same form as in our Theorem \ref{thm:main}, with
$\tilde{H}\ZZ$ replaced by $\tilde{H}'\ZZ$. The author contends that it will
eventually be shown that $\tilde{H}\ZZ = \tilde{H}'\ZZ$ and that the
Market-Schlichting spectral sequence coincides with the generalized slice
spectral sequence.\footnote{Added later: the isomorphism $\tilde{H}\ZZ \wequi
\tilde{H}'\ZZ$ has now been established and will appear in forthcoming joint work
of the author and Jean Fasel.}

We note that an obvious modification of the Calmés-Fasel construction yields a
spectrum $H_W' \ZZ$. Again the author contends that $H_W' \ZZ = H_W \ZZ$, but this is not
currently known.

\paragraph{More about $\tilde{H}\ZZ$ and $H_W \ZZ$.}

In the mean time, we propose to study the spectra $\tilde{H}\ZZ$ and $H_W \ZZ$
abstractly. Taking intuition from classical topology, i.e. comparing the
two decompositions of $\tilde{s}_0 \KO$ with $(\pi_1 KO, \pi_0 KO) = (\ZZ/2,
\ZZ)$ we see that $\tilde{H}\ZZ$ should be a ``variant'' of $\ZZ$ and $H_W \ZZ$
should be a ``variant'' of $\ZZ/2$. This is a familiar game in motivic homotopy
theory: the standard unoriented variant of $\ZZ$ is the homotopy module
$\ul{K}_*^{MW}$ of Milnor-Witt $K$-theory, i.e. $\ul{\pi}_0 S_*$, and the
standard unoriented variant of $\ZZ/2$ is the homotopy module $\ul{K}_*^W =
\ul{K}_*^{MW}/h$ of Witt $K$-theory \cite[Chapter 3]{A1-alg-top}. (The standard oriented
variants are Milnor $K$-theory $\ul{K}_*^M$ and its mod-2 version.)
Thus the following result
confirms a very optimistic guess:
\begin{thm*}[(Morel's Structure Conjecture; see Theorem \ref{thm:morel-conjecture})]
The homotopy sheaves of $\tilde{H}\ZZ$ and $H_W \ZZ$ are given as follows:
\begin{equation*}
\begin{split}
\ul{\pi}_i(\tilde{H}\ZZ)_* =
\begin{cases}
  \ul{K}_*^{MW} &$i=0$ \\
  \ul{\pi}_i(H_\mu \ZZ)_* &i\ne 0
\end{cases}
\end{split}
\quad\quad
\begin{split}
\ul{\pi}_i(H_W\ZZ)_* =
\begin{cases}
  \ul{K}_*^{W} &$i=0$ \\
  \ul{\pi}_i(H_\mu \ZZ/2)_* &i\ne 0
\end{cases}
\end{split}
\end{equation*}
\end{thm*}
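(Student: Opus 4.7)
My plan is to compute the homotopy sheaves of $\tilde H \ZZ$ and $H_W \ZZ$ by first pinning down those of $\tilde s_0 \KO$ and then extracting the answer from the two triangles
\[ H_\mu \ZZ/2 [1] \to \tilde s_0 \KO \to \tilde H \ZZ, \qquad H_W \ZZ \wedge \Gm \to \tilde s_0 \KO \to H_\mu \ZZ \]
via their long exact sequences of homotopy sheaves. Since $\ul{\pi}_*(H_\mu \ZZ)_*$ and $\ul{\pi}_*(H_\mu \ZZ/2)_*$ are just motivic cohomology and mod-$2$ motivic cohomology respectively, they are explicit input.

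To compute $\ul{\pi}_*(\tilde s_0 \KO)_*$ I would use the generalized slice tower of $\KO$. The universal map $\tilde f_0 \KO \to \KO$ induces an isomorphism on $\ul{\pi}_i$ for $i \ge 0$ and kills the negative homotopy sheaves: its cofibre lies in the right orthogonal of $\SH(k)^\veff$, which sits in $\SH(k)_{<0}$ by the connectivity of the generators $\Sigma^\infty X_+ \wedge S^n$ with $n \ge 0$. Hence $\ul{\pi}_{\ge 0}(\tilde f_0 \KO)_*$ is nothing but the connective cover of $\ul{\pi}_* \KO_*$, which is explicit in terms of Grothendieck-Witt and Witt sheaves. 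By the Main Theorem, every higher slice $\tilde s_n \KO$ for $n \ge 1$ is a twist of $H_\mu \ZZ$, $H_\mu \ZZ/2$, or $0$, so the homotopy sheaves of $\tilde f_1 \KO$ can be computed by induction down the slice tower with motivic cohomology as input. The cofibre triangle $\tilde f_1 \KO \to \tilde f_0 \KO \to \tilde s_0 \KO$ then delivers $\ul{\pi}_*(\tilde s_0 \KO)_*$.

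Feeding this into the two triangles above, the LES immediately yields the formulas for $\ul{\pi}_i$ with $i \ne 0$: the "correction terms" $H_\mu \ZZ/2[1]$ and $H_W \ZZ \wedge \Gm$ contribute in adjacent degrees only, so in the range $i > 0$ everything matches $\ul{\pi}_i H_\mu \ZZ$ (resp.\ $\ul{\pi}_i H_\mu \ZZ/2$). At $i = 0$ the output is an extension of $\ul{K}_*^M$ by the fundamental ideal $\ul{I}^{*+1}$ in the case of $\tilde H \ZZ$, and of $\ul{K}_*^M/2$ by $\ul{I}^{*+1}$ in the case of $H_W \ZZ$; these must be identified as $\ul{K}_*^{MW}$ and $\ul{K}_*^W$ respectively.

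The main obstacle is precisely this last identification of the extension class, since several non-isomorphic homotopy modules share the same kernel and cokernel. To rigidify it, I would exploit the motivic unit map $S \to \KO$, which descends through the slice tower to give a map $s_0 S = H_\mu \ZZ \to \tilde s_0 \KO \to \tilde H \ZZ$, together with Morel's theorem $\ul{\pi}_0 S_* = \ul{K}_*^{MW}$. The non-triviality of the resulting extension class is detected by the $\eta$-action, which is non-zero on $\KO$ (witnessed by the non-vanishing Witt groups) and thus rules out the split alternative, forcing the Milnor-Witt and Witt $K$-theory structures.
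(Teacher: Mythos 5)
Your proposal has the right skeleton — use the two triangles decomposing $\tilde s_0 \KO$, feed in what is known about $H_\mu \ZZ$ and $H_\mu \ZZ/2$, and extract the answer — but it underestimates the difficulty of two key steps, and one of its technical claims is wrong.

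First, the claim that the cofibre of $\tilde f_0 \KO \to \KO$ lies in $\SH(k)_{<0}$ is false. By Lemma \ref{lemm:fn-trunc} we have $\tilde f_0 E \wequi f_0(E_{\ge 0})$, so the cofibre is built out of $E_{<0}$ (which is indeed coconnective) \emph{and} out of the "anti-effective" part of $E_{\ge 0}$, which need not be coconnective. The right orthogonal of $\SH(k)^\veff$ is the class of objects $Y$ with $[\Sigma^\infty X_+ \wedge S^n, Y] = 0$ for $n \ge 0$; this kills $\ul{\pi}_{\ge 0}(Y)_j$ only for $j \ge 0$. So $\ul{\pi}_{\ge 0}(\tilde f_0 \KO)_*$ is \emph{not} the connective cover of $\ul{\pi}_*(\KO)_*$ as a homotopy module; in particular $\ul{\pi}_0(\tilde f_0 \KO)_j$ differs from $\ul{GW}_j$ for $j < 0$. (You can also see this directly: $\ul{\pi}_0(\tilde f_0 \KO)_* = i^\heart f_0 \ul{\pi}_0(\KO)_*$, which is the effective cover.) Moreover, the higher homotopy sheaves $\ul{\pi}_i(\KO)_*$, $i>0$, are higher Grothendieck--Witt sheaves and are far from explicit, so the "connective cover" step would not give you closed-form input even if it were correct.

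Second, and more seriously, the sentence "the LES immediately yields the formulas for $\ul{\pi}_i$ with $i \neq 0$" is exactly where the real content lies, and it is not immediate. In weights $j \le 0$ everything is indeed formal, but in weights $j \ge 1$ neither $\ul{\pi}_i(\tilde s_0 \KO)_j$ nor the relevant boundary maps are under control a priori. The paper's proof of this portion (Lemmas \ref{lemm:HW-wt-one}--\ref{lemm:HW-pi-ge1}) does it by a non-trivial induction on the weight $n$, using: the identification $H_W \ZZ/\eta \wequi H_\mu\ZZ/2 \oplus H_\mu\ZZ/2[2]$, the identification of a boundary map with $\tau$ via the computation of the motivic Steenrod algebra (Hoyois--Kelly--{\O}stv{\ae}r), and then Voevodsky's resolution of the Milnor conjectures to guarantee that multiplication by $\tau$ is an isomorphism on $\ul{\pi}_{\ge 1}(H_\mu \ZZ/2)_*$. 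None of these ingredients appears in your proposal, and without something like them there is no way to propagate the computation from weight $0$ to all positive weights. You also propose an inductive computation of $\ul{\pi}_*(\tilde f_1\KO)$ down the slice tower, which would require explicit control of all the boundary maps in that tower; the paper does not attempt this, and for good reason.

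Finally, the issue you flag as the "main obstacle" — the identification of the extension at $i = 0$ — is actually the easy part in the paper's approach. By Lemma \ref{lemm:htpy-mods-eff} the homotopy modules $\ul{K}_*^{MW}$ and $\ul{K}_*^W$ are effective, and by Proposition \ref{prop:more-iheart}(1)--(2) we have $r i^\heart \wequi \id$, so $\ul{\pi}_0(f_0 F_*)_* \iso F_*$ for any effective homotopy module $F_*$. This gives $\ul{\pi}_0(\tilde H\ZZ)_* = \ul{K}_*^{MW}$ and $\ul{\pi}_0(H_W\ZZ)_* = \ul{K}_*^W$ immediately, with no need to analyse an extension class or an $\eta$-action; compare Lemma \ref{lemm:pi0-HW}. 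So your plan puts the effort in the wrong place: the $i = 0$ case is formal, while the $i > 0$, positive-weight case is where you need substantial additional input that your sketch omits.
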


\paragraph{Organisation of this Article.}

In the preliminary Section \ref{sec:recollections} we recall some basic facts
about stable motivic homotopy theory, and in particular the homotopy
$t$-structures.

In Section \ref{sec:effective-spectra} we collect some results about the
category $\SH(k)^\eff$ of effective spectra. In particular we show that it
carries a $t$-structure, show that the effectivization functor $r: \SH(k) \to
\SH(k)^\eff$ is exact, and provide some results about the heart
$\SH(k)^{\eff,\heart}$. Note that by definition $\SH(k)^\veff = \SH(k)^\eff_{\ge
0}$.

In Section \ref{sec:generalized-slice-filtration} we define the generalized
slice filtration and establish some basic results. In particular we show that
there are two canonical ways of decomposing a generalized slice, similar to how
we decomposed $\tilde{s}_0\KO$. We also give precise definitions of the spectra
$H_\mu \ZZ, H_\mu \ZZ/2, \tilde{H}\ZZ$ and $H_W \ZZ$ we use.

In Section \ref{sec:slices-herm-k} we prove our main theorem computing the
generalized slices of the hermitian $K$-theory spectrum $\KO$. This uses
crucially a lemma of Voevodsky \cite[Proposition 4.4]{voevodsky293possible},
the detailed study of the geometry of quaternionic
Grassmannians by Panin-Walter \cite{panin2010quaternionic} and the geometric
representability of symplectic $K$-theory by Quaternionic Grassmannians, as
proved by Panin-Walter \cite{panin2010motivic} and Schlichting-Tripathi
\cite{schlichting2015geometric}.

Finally in Section \ref{sec:morel-conjecture} we compute the homotopy sheaves of
$\tilde{H}\ZZ$ and $H_W\ZZ$. The computation of $\ul{\pi}_i(\tilde{H}\ZZ)_j$ and
$\ul{\pi}_i(H_W \ZZ)_j$ for $i \le 0$ or $j \le 0$
is a rather formal consequence of results in
Section \ref{sec:effective-spectra}. Thus the main work is in computing the higher
homotopy sheaves in positive weights. The basic idea is to play off the two triangles
$H_\mu \ZZ/2[1] \to \tilde{s}_0 \KO \to \tilde{H} \ZZ$ and $H_W \ZZ \wedge \Gm
\to \tilde{s}_0 \KO \to H_\mu \ZZ$ against each other. For example, an immediate
consequence of the first triangle is that $\ul{\pi}_i(\tilde{s}_0 \KO)_0$ is
given by $\ul{GW}$ if $i=0$, by $\ZZ/2$ if $i = 1$, and by $0$ else. This
implies that $\ul{\pi}_1(H_W \ZZ)_1 = \ZZ/2$, which is a very special case of
Theorem \ref{thm:morel-conjecture}. The general case proceeds along the same
lines. We should mention that this pulls in many more dependencies than the
previous sections, including the resolution of the Milnor conjectures and the
computation of the motivic Steenrod algebra.

\paragraph{Acknowledgements.}
The main impetus for writing this article was a talk by Marco Schlichting where
he presented his results about a Grayson-type spectral sequence for Hermitian
$K$-theory, and Markus Spitzweck's question if this spectral sequence can be
recovered using the generalized slice filtration.

I would also like to thank Fabien Morel for teaching me most of the things I know
about motivic homotopy theory, and for encouragement and support during my
attempts at proving these results. Finally I thank Benjamin Antieau for comments
on a draft of this article.

\paragraph{Conventions.}
Throughout, $k$ is perfect base field. This is because
we will make heavy use of the
homotopy $t$-structure on $\SH(k)$, the heart of which is the category of
homotopy modules \cite[Section 5.2]{morel-trieste}. We denote unit of the
symmetric monoidal structure on $\SH(k)$ by $S$, this is also known as the motivic
sphere spectrum.

We denote by $Sm(k)$ the
category of smooth $k$-schemes. If $X \in Sm(k)$ we write $X_+ \in
Sm(k)_*$ for the pointed smooth scheme obtained by adding a disjoint base point.

We use \emph{homological} grading for our $t$-structures, see for example
\cite[Definition 1.2.1.1]{lurie-ha}.

Whenever we say ``triangle'', we actually mean ``distinguished triangle''.

\section{Recollections on Motivic Stable Homotopy Theory}
\label{sec:recollections}

We write $\SH^{S^1}(k)$ for the $S^1$-stable motivic homotopy
category \cite[Section 4.1]{morel-trieste} and $\SH(k)$ for the
$\mathbb{P}^1$-stable motivic homotopy category \cite[Section
5.1]{morel-trieste}. We let $\Sigma^\infty_{S^1}: Sm(k)_* \to \SH^{S^1}(k)$ and
$\Sigma^\infty: Sm(k)_* \to \SH(k)$ denote the infinite suspension spectrum
functors. Note that there exists an essentially unique adjunction
\[ \Sigma^\infty_s: \SH^{S^1}(k) \leftrightarrows \SH(k): \Omega^\infty_s \]
such that $\Sigma^\infty_s \circ \Sigma^\infty_{S^1} \iso \Sigma^\infty$.

For $E \in \SH^{S^1}(k)$ and $i \in \ZZ$ we define
$\ul{\pi}_i(E)$ to be the Nisnevich sheaf on $Sm(k)$ associated with the
presheaf $X \mapsto [\Sigma^\infty_{S^1} X_+ \wedge S^i, E]$. For $E \in \SH(k)$
and $i, j \in \ZZ$ we put $\ul{\pi}_i(E)_j = \ul{\pi}_i(\Omega^\infty_s(E \wedge
\Gmp{j})$. We define
\begin{align*}
\SH^{S^1}(k)_{\ge 0} &= \{E \in \SH^{S^1}(k) | \ul{\pi}_i(E) = 0 \text{ for all } i < 0\} \\
\SH^{S^1}(k)_{\le 0} &= \{E \in \SH^{S^1}(k) | \ul{\pi}_i(E) = 0 \text{ for all } i > 0\} \\
\SH(k)_{\ge 0} &= \{E \in \SH(k) | \ul{\pi}_i(E)_j = 0 \text{ for all } i < 0, j \in \ZZ\} \\
\SH(k)_{\le 0} &= \{E \in \SH(k) | \ul{\pi}_i(E)_j = 0 \text{ for all } i > 0, j \in \ZZ\}.
\end{align*}
As was known already to Voevodsky, this defines
$t$-structures on $\SH^{S^1}(k), \SH(k)$ \cite[Theorems 4.3.4 and 5.2.6]{morel-trieste}, called the
\emph{homotopy $t$-structures}. The most important ingredient in the proof of
this fact is the \emph{stable connectivity theorem}. The unstable proof in
\cite[Lemma 3.3.9]{morel-trieste} is incorrect; this has been fixed in
\cite[Theorem 6.1.8]{morel2005stable}. It implies that if $X \in Sm(k)$ then
$\Sigma^\infty X_+ \in \SH(k)_{\ge 0}$ and $\Sigma^\infty_{S^1} X_+ \in
\SH^{S^1}(k)_{\ge 0}$ \cite[Examples 4.1.16 and 5.2.1]{morel-trieste}.
If $E \in \SH(k)$ then we denote its truncations by $E_{\ge 0} \in \SH(k)_{\ge
0}, E_{\le 0} \in \SH(k)_{\le 0}$ and so on. We will not explicitly use the
truncation functors of $\SH^{S^1}(k)$, and so do not introduce a notation.

The hearts $\SH^{S^1}(k)^\heart$,
$\SH(k)^\heart$ can be described explicitly. Indeed $\SH^{S^1}(k)^\heart$ is
equivalent to the category of Nisnevich sheaves of
abelian groups which are strictly homotopy invariant (i.e. sheaves $F$ such that
the map $H^p_{Nis}(X, F) \to
H^p_{Nis}(X \times \Aone, F)$ obtained by pullback along the projection $X
\times \Aone \to X$ is an isomorphism, for every $X \in Sm(k)$) \cite[Lemma 4.3.7(2)]{morel-trieste}.
On the other hand $\SH(k)^\heart$ is equivalent to
the category of \emph{homotopy modules} \cite[Theorem 5.2.6]{morel-trieste}. Let us
recall that a homotopy module $F_*$ consists of a sequence of sheaves $F_i \in
Shv(Sm(k)_{Nis})$ which are strictly homotopy invariant, and isomorphisms $F_i
\to (F_{i+1})_{-1}$. Here for a sheaf $F$ the \emph{contraction} $F_{-1}$ is as
usual defined as $F_{-1}(X) = F(X \times (\Aone \setminus 0)) / F(X)$. The
morphisms of homotopy modules are the evident compatible systems of morphisms.
One then shows that in fact for $E \in \SH(k)$, the homotopy sheaves
$\ul{\pi}_i(E)_*$ form (for each $i$) a homotopy module in a natural way
\cite[Lemma 5.2.5]{morel-trieste}.

We will mostly not distinguish the category $\SH(k)^\heart$ from the
(equivalent) category of homotopy modules, and so may write things like ``let
$F_* \in \SH(k)^\heart$ be a homotopy module''.

Because there can be some confusion about the meaning of epimorphism and so on
when several abelian categories are being used at once, let us include the
following observation. It implies in particular that not much harm will come from
confusing for $E \in \SH(k)$ the homotopy module $\ul{\pi}_i(E)_* \in
\SH(k)^\heart$ with the family of Nisnevich sheaves $(i \mapsto
\ul{\pi}_i(E)_i)$.

\begin{lemm} \label{lemm:exactness}
Write $Ab(Shv(Sm(k)_{Nis}))$ for the category of Nisnevich sheaves of abelian groups
on $Sm(k)$, and $Ab(Shv(Sm(k)_{Nis}))^\ZZ$ for the category of $\ZZ$-graded families
of sheaves of abelian groups.
\begin{enumerate}[(1)]
\item The category $\SH^{S^1}(k)^\heart$ has all limits and colimits and the
      functor $\SH^{S^1}(k)^\heart \to Ab(Shv(Sm(k)_{Nis})), E \mapsto \ul{\pi}_0(E)$ is fully faithful
      and preserves limits and colimits.
\item The category $\SH(k)^\heart$ has all limits and colimits and the
      functor $\SH(k)^\heart \to Ab(Shv(Sm(k)_{Nis}))^\ZZ, E \mapsto
      (\ZZ \ni i \mapsto \ul{\pi}_0(E)_i)$ is conservative and preserves limits and colimits.
\end{enumerate}
\end{lemm}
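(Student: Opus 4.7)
The plan is to reduce each statement to a concrete claim about strictly $\Aone$-invariant Nisnevich sheaves, for which Morel's work over a perfect field provides what we need.

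For part (1), I will invoke the equivalence between $\SH^{S^1}(k)^\heart$ and the full subcategory of $Ab(Shv(Sm(k)_{Nis}))$ spanned by strictly $\Aone$-invariant sheaves, under which $E \mapsto \ul{\pi}_0(E)$ is (up to canonical isomorphism) the inclusion functor. Fully faithfulness is then built into the equivalence. Over a perfect field, strictly $\Aone$-invariant sheaves are closed in $Ab(Shv(Sm(k)_{Nis}))$ under kernels, cokernels, arbitrary products and arbitrary (filtered) colimits; this is exactly the statement that they form an abelian subcategory on which the inclusion is exact and preserves all limits and colimits. So (1) reduces to citing these closure properties (which rest on Morel's theorem that over a perfect field strongly $\Aone$-invariant sheaves are strictly $\Aone$-invariant).

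For part (2), I will use the equivalence $\SH(k)^\heart \simeq$ homotopy modules and unwind the definition: a homotopy module is a family $(F_i)_{i \in \ZZ}$ of strictly $\Aone$-invariant sheaves together with isomorphisms $F_i \cong (F_{i+1})_{-1}$. The forgetful functor to $Ab(Shv(Sm(k)_{Nis}))^\ZZ$ sends $(F_i, \alpha_i)$ to $(F_i)$; conservativity is obvious because a morphism of homotopy modules that is zero in each weight is zero. For (co)limits, the plan is to show they are computed weightwise in the category of strictly $\Aone$-invariant sheaves, which by part (1) agrees with weightwise (co)limits in $Ab(Shv(Sm(k)_{Nis}))$. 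This reduces the claim to the fact that the contraction functor $F \mapsto F_{-1}$ commutes with all limits and colimits when restricted to strictly $\Aone$-invariant sheaves. That in turn follows from the definition $F_{-1}(X) = \ker(F(X \times (\Aone \setminus 0)) \to F(X))$ (evaluation at a rational point), since kernels, sections and quotients of sheaves all commute with the relevant (co)limits once closure of strictly $\Aone$-invariant sheaves under (co)limits is known.

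I expect the only nontrivial input to be the fact, used in (1), that the subcategory of strictly $\Aone$-invariant sheaves is closed under cokernels and colimits inside all Nisnevich sheaves of abelian groups; this is where the perfect-field hypothesis really enters, via Morel's structure theorem. Everything else — fully faithfulness, conservativity, the weightwise computation of (co)limits of homotopy modules, and the exactness of contraction — is a routine unwinding of definitions once that foundational closure statement is in hand.
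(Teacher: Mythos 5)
Your route is genuinely different from the paper's: you try to identify the hearts concretely (strictly $\Aone$-invariant sheaves, homotopy modules as graded sheaves with contraction data) and verify closure and preservation by hand, whereas the paper never touches the sheaf-theoretic description. Instead it proves two abstract lemmas (Lemma \ref{lemm:coproducts-heart} and Lemma \ref{lemm:pres-lim-colim}) about how truncation interacts with products/coproducts, and then exhibits each functor as $U^\heart$ for a $t$-exact right adjoint $U$ in an adjunction ($i: \SH^{S^1}(k) \hookrightarrow \SH^{S^1}_s(k)$ for part (1), and $\Omega^{\infty\pm d}_s: \SH(k) \to \SH^{S^1}(k)$ for part (2)); preservation of (co)limits then falls out without any input from Morel's theory.

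There is a real gap in your argument for part (2). You claim that the contraction $F \mapsto F_{-1}$ ``commutes with all limits and colimits... from the definition, since kernels, sections and quotients of sheaves all commute with the relevant (co)limits.'' This is false as stated: $F_{-1}(X) = \ker(F(X \times \Gm) \to F(X))$ is manifestly a kernel construction, hence only \emph{left} exact a priori, and sections of sheaves do not commute with cokernels. Right exactness of contraction on strictly $\Aone$-invariant sheaves is a genuine theorem of Morel (proved via the Rost--Schmid/Gersten resolution, not from the defining formula), and it must be cited as such rather than read off from the definition. Relatedly, you ascribe the closure of strictly $\Aone$-invariant sheaves under cokernels, arbitrary products and colimits entirely to the ``strongly $\Rightarrow$ strictly'' theorem, but the product and filtered-colimit cases additionally require that Nisnevich cohomology on $Sm(k)$ commutes with those operations (bounded cohomological dimension / noetherian hypotheses). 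These are all available in the literature, but your write-up presents them as definitional when they are not; by contrast, the paper's adjunction argument circumvents the need for any of these explicit exactness and closure statements.
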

In particular both functors are exact and detect epimorphisms.
Let us also note that a conservative exact functor is faithful
(two morphisms are equal if and only if their equaliser maps isomorphically to
the source).

Before the proof we have two lemmas, which surely must be well-known.

\begin{lemm} \label{lemm:coproducts-heart}
Let $\mathcal{C}$ be a $t$-category and write $j: \mathcal{C}^\heart \to
\mathcal{C}$ for the inclusion of the heart. Let $\{E_i \in \mathcal{C}\}_{i \in
I}$ be a family of objects. If $\bigoplus_i j(E_i) \in \mathcal{C}$ exists then
$\bigoplus_i E_i \in \mathcal{C}^\heart$  exists and is given by $(\bigoplus_i
j(E_i))_{\le 0}$. Similarly, if $\prod_i j(E_i) \in \mathcal{C}$ exists then
$\prod_i E_i \in \mathcal{C}^\heart$  exists and is given by $(\prod_i
j(E_i))_{\ge 0}$.
\end{lemm}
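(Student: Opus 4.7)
The plan is to reduce both statements to the universal property by combining the inclusions $\mathcal{C}^\heart \hookrightarrow \mathcal{C}_{\ge 0} \hookrightarrow \mathcal{C}$ and $\mathcal{C}^\heart \hookrightarrow \mathcal{C}_{\le 0} \hookrightarrow \mathcal{C}$ with the standard adjunctions of a $t$-structure: in the homological convention of \cite[Definition 1.2.1.1]{lurie-ha}, the truncation $\tau_{\le 0}$ is left adjoint to $\mathcal{C}_{\le 0} \hookrightarrow \mathcal{C}$ and $\tau_{\ge 0}$ is right adjoint to $\mathcal{C}_{\ge 0} \hookrightarrow \mathcal{C}$. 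I first note the convention-free fact that $\mathcal{C}_{\ge 0}$ admits the characterisation $\{X : [X, Y] = 0 \text{ for all } Y \in \mathcal{C}_{\le -1}\}$, from which closure under those coproducts that exist in $\mathcal{C}$ is immediate (since $[\bigoplus_i X_i, Y] = \prod_i [X_i, Y]$); dually, $\mathcal{C}_{\le 0}$ is closed under products that exist in $\mathcal{C}$.

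For the coproduct statement, since each $j(E_i)$ lies in $\mathcal{C}_{\ge 0}$, the assumed object $\bigoplus_i j(E_i) \in \mathcal{C}$ also lies in $\mathcal{C}_{\ge 0}$, and hence its truncation $C := (\bigoplus_i j(E_i))_{\le 0}$ lies in $\mathcal{C}^\heart$. To verify $C$ is the coproduct in $\mathcal{C}^\heart$, take any $F \in \mathcal{C}^\heart$ and compute
\[
  \Hom_{\mathcal{C}^\heart}(C, F) = \Hom_{\mathcal{C}}(C, F) = \Hom_{\mathcal{C}}\bigl(\textstyle\bigoplus_i j(E_i), F\bigr) = \prod_i \Hom_{\mathcal{C}^\heart}(E_i, F),
\]
using that $\mathcal{C}^\heart \subset \mathcal{C}$ is full, the adjunction $\tau_{\le 0} \dashv (\mathcal{C}_{\le 0} \hookrightarrow \mathcal{C})$ together with $F \in \mathcal{C}_{\le 0}$, and the universal property of the coproduct in $\mathcal{C}$. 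The product statement follows by the exactly dual argument, starting from the fact that $\prod_i j(E_i) \in \mathcal{C}_{\le 0}$ and applying the adjunction $(\mathcal{C}_{\ge 0} \hookrightarrow \mathcal{C}) \dashv \tau_{\ge 0}$.

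There is no real obstacle here; the only minor point to get right is the direction of the two adjunctions and the observation that $\mathcal{C}_{\ge 0}$ (respectively $\mathcal{C}_{\le 0}$) is closed under the ambient coproduct (respectively product). Everything else is a formal chain of Yoneda identifications.
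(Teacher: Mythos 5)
Your proof is correct and takes essentially the same route as the paper's: observe that $\bigoplus_i j(E_i)$ lies in $\mathcal{C}_{\ge 0}$ because the defining vanishing condition is closed under the coproduct, then apply the adjunction $\tau_{\le 0} \dashv (\mathcal{C}_{\le 0} \hookrightarrow \mathcal{C})$ and fullness of $j$ to verify the universal property, with the product statement by duality. You merely spell out the adjunction step a bit more explicitly than the paper does.
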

\begin{proof}
The second statement is dual to the first (under passing to opposite
categories), so we need only prove the latter. Note first that $\bigoplus_i
j(E_i) \in \mathcal{C}_{\ge 0}$. Indeed if $E \in \mathcal{C}_{< 0}$ then
$[\bigoplus_i j(E_i), E] = \prod_i[j(E_i), E] = 0$. Consequently if $E \in
\mathcal{C}^\heart$ then $[(\bigoplus_i j(E_i))_{\le 0}, E] = [\bigoplus_i
j(E_i), jE] = \prod_i [E_i, E]$, since $jE \in \mathcal{C}_{\le 0}$ and $j$ is
fully faithful. This concludes the proof.
\end{proof}

Let $\mathcal{C}, \mathcal{D}$ be provided with subcategories $\mathcal{C}_{\ge
0}, \mathcal{C}_{\le 0}, \mathcal{D}_{\ge 0}, \mathcal{D}_{\le 0}$; for example
$\mathcal{C}, \mathcal{D}$ could be $t$-categories.
A functor $F: \mathcal{C} \to \mathcal{D}$ is called \emph{right} (respectively
\emph{left}) \emph{t-exact} if $F(\mathcal{C}_{\ge 0}) \subset \mathcal{D}_{\ge
0}$ (respectively $F(\mathcal{C}_{\le 0}) \subset \mathcal{D}_{\le 0}$). It is
called t-exact if it is both left and right t-exact.

\begin{lemm} \label{lemm:pres-lim-colim}
Let $F: \mathcal{C} \leftrightarrows \mathcal{D}: U$ be an adjunction of
$t$-categories, and assume that $U$ is $t$-exact. Then the induced functor
$U^\heart: \mathcal{D}^\heart \to \mathcal{C}^\heart$ preserves limits and finite colimits.
If $\mathcal{C}$ is compactly generated, $F$ preserves compact objects, and
$\mathcal{D}$ has arbitrary coproducts, then $U^\heart$ preserves all colimits.
\end{lemm}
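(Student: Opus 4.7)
The plan is to produce a left adjoint to $U^\heart$ (giving preservation of limits for free), to deduce preservation of finite colimits from the fact that $U$ is an exact triangulated functor preserving the heart, and finally to handle arbitrary coproducts via the classical compactness trick combined with Lemma \ref{lemm:coproducts-heart}.

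Since $U$ is $t$-exact, $U$ restricts to a functor $U^\heart : \mathcal{D}^\heart \to \mathcal{C}^\heart$, and a standard $\Hom$-vanishing argument on the adjunction shows that $F$ is then right $t$-exact; hence the formula $F^\heart(C) := (FC)_{\leq 0}$ defines a functor $\mathcal{C}^\heart \to \mathcal{D}^\heart$ which one checks is left adjoint to $U^\heart$. As a right adjoint $U^\heart$ then preserves all limits. For finite colimits I would note that an exact triangulated functor which is $t$-exact automatically restricts to an exact functor on hearts, since a short exact sequence in $\mathcal{D}^\heart$ is the same data as a triangle in $\mathcal{D}$ whose three vertices happen to lie in $\mathcal{D}^\heart$, and $U$ carries such a triangle to one with vertices in $\mathcal{C}^\heart$. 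So $U^\heart$ is exact in the abelian sense, in particular right exact, and so preserves all finite colimits.

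For the final claim the strategy is to reduce coproducts in the hearts to coproducts in the ambient triangulated categories. By Lemma \ref{lemm:coproducts-heart} the coproduct in $\mathcal{D}^\heart$ of a family $\{D_i\}$ is $(\bigoplus_i D_i)_{\leq 0}$, computed in $\mathcal{D}$, and similarly on the $\mathcal{C}$-side (all the relevant coproducts exist: $\mathcal{D}$ has them by hypothesis and $\mathcal{C}$ has them by compact generation). Moreover $U$ commutes with $\tau_{\leq 0}$, as one sees by applying the triangulated functor $U$ to the truncation triangle and invoking $t$-exactness. It therefore suffices to show that the natural comparison map $\bigoplus_i U D_i \to U(\bigoplus_i D_i)$ in $\mathcal{C}$ is an isomorphism, and this is precisely the classical compactness calculation: for any compact $c \in \mathcal{C}$, compactness of $Fc$ in $\mathcal{D}$ together with the adjunction gives
\[
[c, \bigoplus_i U D_i] \iso \bigoplus_i [c, U D_i] \iso \bigoplus_i [Fc, D_i] \iso [Fc, \bigoplus_i D_i] \iso [c, U(\bigoplus_i D_i)],
\]
and compact generation of $\mathcal{C}$ promotes this to the desired isomorphism.

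The whole argument is formal, so I do not anticipate a serious obstacle. The only point that genuinely uses the extra hypotheses is the compactness calculation in the last paragraph, so this is the step to keep an eye on — both compact generation of $\mathcal{C}$ and preservation of compacts by $F$ are essential inputs there, and without either one the final assertion can fail (for instance when $U$ is a forgetful functor from modules over a large ring).
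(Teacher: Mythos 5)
Your proof is correct and follows essentially the same path as the paper: you construct the induced adjunction $F^\heart \dashv U^\heart$ (the paper cites BBD Proposition 1.3.17(iii) for this), deduce preservation of limits formally, obtain right exactness of $U^\heart$ from $t$-exactness of $U$ (the paper cites BBD 1.3.17(i), you argue directly via the triangle/short-exact-sequence dictionary, which is fine and in fact gives two-sided exactness), and handle arbitrary coproducts by the compactness computation combined with Lemma~\ref{lemm:coproducts-heart} and the fact that $U$ commutes with truncation. The paper leaves the ``$U$ preserves coproducts'' step implicit; you spell it out, but it is the same argument.
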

\begin{proof}
There is an induced adjunction $F^\heart: \mathcal{C}^\heart \leftrightarrows
\mathcal{D}^\heart: U^\heart$, cf. \cite[Proposition 1.3.17
(iii)]{beilinson1982faisceaux}. It follows that $U^\heart$ preserves all limits.
Since $U$ is right $t$-exact, $U^\heart$ is right exact, i.e. preserves finite
colimits \cite[Proposition 1.3.17 (i)]{beilinson1982faisceaux}.

Under the additional assumptions, $U$ preserves arbitrary coproducts, and so
Lemma \ref{lemm:coproducts-heart} implies that $\mathcal{D}^\heart$ has
arbitrary coproducts and that $U^\heart$ preserves them.
The result follows since all colimits can be built from coproducts and finite
colimits.
\end{proof}

\begin{proof}[of Lemma \ref{lemm:exactness}]
The category $\SH(k)$ is compactly generated \cite[Proposition
6.4(3)]{hoyois-equivariant}, and hence has all products and coproducts. It
follows from Lemma \ref{lemm:coproducts-heart} that $\SH(k)^\heart$ has all
products and coproducts, and hence all limits and colimits. The same argument
applies to $\SH^{S^1}(k)^\heart$.

Let $\SH^{S^1}_s(k)$ denote the stable Nisnevich-local homotopy category, built
in the same way as $\SH^{S^1}(k)$, but without performing $\Aone$-localization. It is
also compactly generated. Then
for $E \in \SH^{S^1}_s(k)$ we may define $\ul{\pi}_i(E)$ in just the same way as
before, and we may also define $\SH^{S^1}_s(k)_{\ge 0}, \SH^{S^1}_s(k)_{\le 0}$ in
the same way as before. Then $\SH^{S^1}_s(k)$ is a $t$-category with heart
$Shv(Sm(k)_{Nis})$ \cite[Proposition 3.3.2]{morel2005stable}.
We have the localization adjunction $L: \SH^{S^1}_s(k)
\leftrightarrows \SH^{S^1}(k): i$. By construction $i$ is fully faithful and
$t$-exact. The functor from (1) is given by $i^\heart$, so in particular is
fully faithful. It preserves all limits and colimits by Lemma
\ref{lemm:pres-lim-colim}.

To prove (2), denote the functor by $u$.
Consider for $d \in \ZZ$ the adjunction $\Sigma_s^{\infty+d}: \SH^{S^1}(k)
\leftrightarrows \SH(k): \Omega^{\infty+d}_s$ given by $\Sigma^{\infty+d}_s(E) =
\Sigma^\infty_s(E) \wedge \Gmp{d}$, $\Omega^{\infty+d}_s(F) =
\Omega^{\infty}_s(F \wedge \Gmp{-d})$. Then $\Omega^{\infty + d}$ is
$t$-exact by construction, and so Lemma \ref{lemm:pres-lim-colim} applies. Note
that for $E \in \SH(k)^\heart$ we have $u(E)_d = i^\heart
(\Omega^{\infty-d}_s)^\heart(E)$, where $i^\heart$ is the functor from (1). It
follows that $E \mapsto u(E)_d$ preserves all limits and colimits, and hence so
does $u$. Note also that $u$ detects
zero objects \cite[Proposition 5.1.14]{morel-trieste}, and hence is conservative
(since it detects vanishing of kernel and cokernel of a morphism).
\end{proof}

\section{The Category of Effective Spectra}
\label{sec:effective-spectra}

We write $\SH(k)^\eff$ for the localising subcategory of $\SH(k)$ generated by
the objects
$\Sigma^\infty X_+,$ with $X \in Sm(k)$. By Neeman's version of Brown
Representability, the inclusion $i: \SH(k)^\eff \to \SH(k)$ has a right adjoint
which we denote by $r$.

For $E \in \SH(k)^\eff$ we let $\ul{\pi}_i(E)_0 \in Shv(Sm(k)_{Nis})$ denote
$\ul{\pi}_i(E)_0 := \ul{\pi}_i(iE)_0$. In general we may drop application of the
functor $i$ when no confusion seems likely. We define
\begin{align*}
\SH(k)^\eff_{\ge 0} &= \{E \in \SH(k)^\eff | \ul{\pi}_i(E)_0 = 0 \text{ for all } i < 0\} \\
\SH(k)^\eff_{\le 0} &= \{E \in \SH(k)^\eff | \ul{\pi}_i(E)_0 = 0 \text{ for all } i > 0\}.
\end{align*}

Some or all of the following was already known to Spitzweck-{\O}stv{\ae}r
\cite[paragraph before Lemma 5.6]{spitzweck2012motivic}.

\begin{prop} \label{prop:effective-t-structure}
\begin{enumerate}[(1)]
\item The functors $\ul{\pi}_i(\bullet)_0: \SH(k)^\eff \to Shv(Sm(k)_{Nis})$ form a
  conservative collection.
\item The category
  $\SH(k)^{\eff}_{\ge 0}$ is generated under homotopy colimits and extensions
  by $\Sigma^\infty X_+ \wedge S^n$, where $n \ge 0, X \in Sm(k)$.
\item The functor $r$ is $t$-exact and $i$ is
  right-$t$-exact.
\item The subcategories $\SH(k)^\eff_{\ge 0}, \SH(k)^\eff_{\le 0}$
  constitute a non-degenerate $t$-structure on $\SH(k)^\eff$.
\end{enumerate}
\end{prop}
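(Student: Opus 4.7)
The plan is to prove (1) first, then to construct a $t$-structure on $\SH(k)^\eff$ abstractly from its compact generators, and to identify the nonnegative part of this $t$-structure with $\SH(k)^\eff_{\ge 0}$ as defined. This will prove (2) and (4) simultaneously, and (3) will then drop out. The main obstacle will be the identification step, which requires combining (1) with the long exact sequence of homotopy sheaves attached to a triangle.

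For (1), I would recall that by definition $\SH(k)^\eff$ is the localising subcategory of $\SH(k)$ generated by $\{\Sigma^\infty X_+ : X \in Sm(k)\}$, and these objects (together with their suspensions) detect zero. For $E \in \SH(k)^\eff$, the Nisnevich sheaf $\ul{\pi}_n(E)_0$ is the sheafification of the presheaf $X \mapsto [\Sigma^\infty X_+ \wedge S^n, E]$; since $\SH(k)$ is modelled by Nisnevich-local spectra, this presheaf is already a Nisnevich sheaf. Hence $\ul{\pi}_n(E)_0 = 0$ for all $n$ forces every map from a generator into $E$ to vanish, whence $E = 0$.

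Next I would apply the standard result (e.g.\ Alonso Tarr{\'{i}}o--Jerem{\'{i}}as L{\'{o}}pez--Souto Salorio on the construction of $t$-structures): in a compactly generated triangulated category, the smallest full subcategory containing a given set of compact objects and closed under coproducts, extensions, and $[1]$ is the nonnegative aisle of a $t$-structure. Applying this inside $\SH(k)^\eff$ with generators $\{\Sigma^\infty X_+ : X \in Sm(k)\}$ yields a $t$-structure whose aisle $\mathcal{C}$ is precisely the subcategory described in (2). To identify $\mathcal{C} = \SH(k)^\eff_{\ge 0}$, the inclusion $\mathcal{C} \subseteq \SH(k)^\eff_{\ge 0}$ follows from Morel's stable connectivity theorem ($\Sigma^\infty X_+ \in \SH(k)_{\ge 0}$, hence $\ul{\pi}_i(\Sigma^\infty X_+)_0 = 0$ for $i<0$) together with closure of $\SH(k)^\eff_{\ge 0}$ under coproducts, extensions, and positive suspensions. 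For the reverse, given $E \in \SH(k)^\eff_{\ge 0}$, I would form the triangle $E' \to E \to F$ coming from the new $t$-structure, with $E' \in \mathcal{C}$ and $F \in \mathcal{C}^{\perp}[1]$; by adjunction $\ul{\pi}_i(F)_0 = 0$ for $i \ge 0$, and the long exact sequence of $\ul{\pi}_\ast(-)_0$ applied to this triangle (using $E, E' \in \SH(k)^\eff_{\ge 0}$) gives $\ul{\pi}_i(F)_0 = 0$ also for $i < 0$. Hence $F = 0$ by (1), and $E = E' \in \mathcal{C}$.

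Finally, (3) reduces to two short points. The adjunction $i \dashv r$ gives $\ul{\pi}_i(rE)_0 = \ul{\pi}_i(E)_0$ for all $E \in \SH(k)$, which immediately yields $t$-exactness of $r$. Right-$t$-exactness of $i$ follows from the description in (2): the generators $\Sigma^\infty X_+ \wedge S^n$ with $n \ge 0$ lie in $\SH(k)_{\ge 0}$ by stable connectivity, and $\SH(k)_{\ge 0}$ is closed under hocolims and extensions. Non-degeneracy is a consequence of (1): any $E$ in $\bigcap_n \SH(k)^\eff_{\ge n}$ or $\bigcap_n \SH(k)^\eff_{\le -n}$ has $\ul{\pi}_i(E)_0 = 0$ for all $i$, and hence is zero.
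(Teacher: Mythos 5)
Your proof of (1) has a genuine gap. You claim that "since $\SH(k)$ is modelled by Nisnevich-local spectra, this presheaf is already a Nisnevich sheaf." This is false: a Nisnevich-local spectrum $E$ satisfies descent as a spectrum (i.e.\ $E$ applied to a Nisnevich square is a homotopy cartesian square), but homotopy groups do not commute with homotopy limits, so the individual presheaves $X \mapsto [\Sigma^\infty X_+ \wedge S^n, E]$ are not sheaves. For instance $[\Sigma^\infty X_+, E]$ carries a nontrivial filtration whose graded pieces are $H^p_{Nis}(X, \ul{\pi}_p(E)_0)$, not just the $p=0$ term. The correct argument, which is what the paper uses, invokes the strongly convergent Nisnevich descent spectral sequence $H^p_{Nis}(X, \ul{\pi}_{-q}(E)_0) \Rightarrow [\Sigma^\infty X_+, E[p+q]]$; vanishing of all the homotopy sheaves kills the whole $E_2$-page, hence the abutment, hence $E$ since the $\Sigma^\infty X_+$ generate $\SH(k)^\eff$. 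Your conclusion is correct, but the stated justification would not survive scrutiny.

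For (2), (3), (4) your route is genuinely different from the paper's and, once (1) is repaired, works. The paper proves (2) directly by a killing-cells argument, then deduces (3) from (2), and finally builds the $t$-structure (4) by hand: it pushes the truncation triangle of $iE$ in $\SH(k)$ through the $t$-exact functor $r$ to get the decomposition triangles, and uses conservativity plus right-$t$-exactness of $i$ to get the orthogonality $[\SH(k)^\eff_{>0}, \SH(k)^\eff_{\le 0}] = 0$. You instead appeal to the general theorem that in a compactly generated triangulated category a set of compact objects generates an aisle, then identify that aisle with $\SH(k)^\eff_{\ge 0}$ by a two-containment argument: the easy containment via Morel's connectivity, the hard containment via a truncation triangle in the new $t$-structure, the long exact sequence of homotopy sheaves, and the conservativity from (1). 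This is more conceptual and shorter in places, at the cost of importing the AJS machinery. Your treatment of (3) and non-degeneracy is essentially the same as the paper's.
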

\begin{proof}
For $X \in Sm(k)$ and $E \in \SH(k)$ we have the strongly convergent Nisnevich descent
spectral sequence $H^p_{Nis}(X, \ul{\pi}_{-q}(E)_0) \Rightarrow [\Sigma^\infty X_+,
E[p+q]]$. Consequently if $E \in \SH(k)^\eff$ and $\ul{\pi}_i(E)_0 = 0$ for all
$i$ then $[\Sigma^\infty X_+, E[n]] = 0$ for all $X \in Sm(k)$ and all $n$.
It follows that $E=0$, since the $\Sigma^\infty X_+$ generate $\SH(k)^\eff$ as
a localizing subcategory (by
definition). Thus the $\ul{\pi}_i(\bullet)_0$ form a conservative collection, i.e.
we have proved (1).

As recalled in the previous section, we have $\Sigma^\infty
X_+ \wedge S^n \in \SH(k)^\eff_{\ge n}$ for $n \ge 0$. Thus if $E \in
\SH(k)^{\eff}_{\ge 0}$, then the homotopy sheaves $\ul{\pi}_i(E)_0$ can be killed off by
attaching cells of the form $\Sigma^\infty X_+ \wedge S^n$ for $n \ge 0$, $X \in
Sm(k)$.
Consequently $\SH(k)^{\eff}_{\ge 0}$ is generated under homotopy colimits and
extensions by objects of the form claimed in (2). We give more details on this
standard argument at the end of the proof.

It follows from adjunction
that for $E \in \SH(k)$ we have $\ul{\pi}_i(r(E))_0 =
\ul{\pi}_i(E)_0$. Consequently $r$ is $t$-exact.
Since $i$ is a left adjoint it commutes with homotopy colimits and so
$i(\SH(k)^\eff_{\ge 0}) \subset \SH(k)_{\ge 0}$ by (2), i.e. $i$ is
right-$t$-exact. Thus we have shown (3).

It remains to show (4), i.e. that we have a non-degenerate $t$-structure. If $E \in
\SH(k)^\eff$ then $riE \wequi E$. Since $r$ is $t$-exact, the
 triangle $r[(iE)_{\ge 0}] \to riE \wequi E \to
r[(iE)_{< 0}]$ coming from the decomposition of $iE$ in the homotopy
$t$-structure is a decomposition of $E$ into non-negative and negative part as
required for a $t$-structure.

Next we need to show that if $E \in \SH(k)^\eff_{>0}$
and $F \in \SH(k)^\eff_{\le 0}$ then $[E, F] = 0$. The natural map $F \to
r[(iF)_{\le 0}]$ induces an isomorphism on all $\ul{\pi}_i(\bullet)_0$,
so is a weak equivalence (by the conservativity result (1)). Thus
$[E, F] = [E, r[(iF)_{\le 0}]] = [iE, (iF)_{\le 0}] = 0$ since $i$ is right-$t$-exact
and so $iE \in \SH(k)_{>0}$.

We have thus shown that $\SH(k)^\eff_{\ge 0}, \SH(k)^\eff_{< 0}$ form a
$t$-structure. It is non-degenerate by (1). This concludes the proof.

\paragraph{Details on killing cells.} We explain in more detail how to prove
(2). Let $\mathcal{C}$ be the
subcategory of $\SH(k)^\eff$ generated under homotopy colimits and extensions by
$\Sigma^\infty X_+ \wedge S^n$, where $n \ge 0, X \in Sm(k)$. We wish to show
that $\SH(k)^\eff_{\ge 0} \subset \mathcal{C}$. As a first step, I claim that if $E
\in \SH(k)^\eff_{\ge n}$ (with $n \ge 0$) there exists $R(E) \in \mathcal{C}
\cap \SH(k)^\eff_{\ge n}$
together with $R(E) \to E$ inducing a surjection on $\ul{\pi}_i(\bullet)_0$ for all $i \ge
0$. Indeed, just let $R(E)$ be the sum $\bigoplus_{\Sigma^\infty X_+ \wedge S^k
\to E} \Sigma^\infty X_+ \wedge S^k$, where the sum is over $k \ge n$, a
suitably large set of varieties $X$, and all maps in $\SH(k)$ as indicated.

Now let $E \in \SH(k)^\eff_{\ge 0}$. We shall construct a diagram $E_0 \to E_1
\to \dots \to E$ with $E_i \in \mathcal{C}$ and $E_i \to E$ inducing an
isomorphism on $\ul{\pi}_j(\bullet)_0$ for all $j < i$. Clearly then $\hocolim_i E_i \to E$
is an equivalence, showing that $E \in \mathcal{C}$, and concluding the proof.

We shall also arrange that $\ul{\pi}_j(E_i)_0 \to
\ul{\pi}_j(E)_0$ is surjective for all $j$ and $i$. Take $E_0 =
R(E)$. Suppose that $E_i$ has been constructed and let us construct $E_{i+1}$.
Consider the homotopy fibre $F \to E_i \to E$. Then $F \in \SH(k)^\eff_{\ge i}$
and $\ul{\pi}_i(F)_0 \to \ul{\pi}_i(E_i)_0 \to \ul{\pi}_i(E)_0 \to 0$ is an exact
sequence (*). Let $E_{i+1}$ be a cone on the composite $R(F) \to F \to E_i$. Since
the composite $R(F) \to F \to E_i \to E$ is zero, the map $E_i \to E$ factors
through $E_i \to E_{i+1}$. It is now easy to see, using (*), that $E_{i+1}$ has
the desired properties.
\end{proof}

\paragraph{Remark.} The paragraph on killing cells in fact shows that
$\SH(k)^\veff$ is generated by $\Sigma^\infty_+ Sm(k)$ under homotopy colimits; no
extensions are needed. We will not use this observation.

\paragraph{Terminology.} In order to distinguish the $t$-structure on
$\SH(k)^\eff$ from the $t$-structure of $\SH(k)$, we will sometimes call the
former the \emph{effective (homotopy) $t$-structure}. We denote the truncations
of $E \in \SH(k)^\eff$ by $E_{\ge_e 0} \in \SH(k)^\eff_{\ge 0}, E_{\le_e 0} \in
\SH(k)^\eff_{\le 0}$ and so on. For $E \in \SH(k)^\eff$, we denote by
$\ul{\pi}_i^\eff(E) \in \SH(k)^{\eff,\heart}$ the homotopy objects. We prove
below that the functor $\ul{\pi}_0(\bullet)_0: \SH(k)^{\eff,\heart} \to Ab(Shv(Sm(k)_{Nis}))$ is conservative
and preserves all limits and colimits. It is thus usually no problem to confuse
$\ul{\pi}_i^\eff(E)$ and $\ul{\pi}_i(E)_0$.

\paragraph{Remark.} By Proposition \ref{prop:effective-t-structure}(3), we have
$\SH(k)^\eff_{\ge 0} \subset \SH(k)_{\ge 0} \cap \SH(k)^\eff$. Since the reverse
inclusion is clear by definition, we conclude that $\SH(k)^\eff_{\ge 0} =
\SH(k)_{\ge 0} \cap \SH(k)^\eff$.

\paragraph{Remark.}
We call the heart $\SH(k)^{\eff,\heart}$ the category of
\emph{effective homotopy modules}. We show below that $i^\heart:
\SH(k)^{\eff,\heart} \to \SH(k)^\heart$ is fully faithful, justifying this
terminology. Note, however, that this is \emph{not} the same category as $\SH(k)^\heart
\cap \SH(k)^\eff$.
It follows from work of Garkusha-Panin
\cite{garkusha2015homotopy} that $\SH(k)^{\eff,\heart}$ is equivalent to the category of homotopy
invariant, quasi-stable,
Nisnevich sheaves of abelian groups with linear framed transfers. We contend
that this category is equivalent to the category of homotopy invariant Nisnevich
sheaves with generalized transfers in the sense of Calmès-Fasel
\cite{calmes2014finite} and also in the sense of Morel
\cite[Definition 5.7]{morel-friedlander-milnor}.

\paragraph{} Except for Proposition \ref{prop:more-iheart}(3) below,
the remainder of this section is not used in the computation of the
generalized slices of hermitian K-theory, only in the last section.

\begin{prop} \label{prop:more-iheart}
\begin{enumerate}[(1)]
\item For $E \in
  \SH(k)^\eff_{\ge 0}$ we have $\ul{\pi}_0(iE)_* = i^\heart \ul{\pi}_0^\eff(E)$,
  where $i^\heart: \SH(k)^{\eff,\heart} \to \SH(k)^\heart$ is the induced
  functor $i^\heart(M) = (iM)_{\le 0}$.
\item The functor $i^\heart: \SH(k)^{\eff,\heart} \to \SH(k)^\heart$ is fully
  faithful.
\item The category $\SH(k)^{\eff,\heart}$ has all limits and colimits, and 
   functor $\SH(k)^{\eff,\heart} \to Ab(Shv(Sm(k)_{Nis})), E \mapsto
  \ul{\pi}_i(E)_0$ is conservative and preserves limits and colimits.
\end{enumerate}
\end{prop}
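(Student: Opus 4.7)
The plan is to tackle the three parts in order, with each building on the previous.

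For part (1), I will apply the inclusion $i$ to the effective truncation triangle $E_{\ge_e 1} \to E \to \ul{\pi}_0^\eff(E)$. Right-$t$-exactness of $i$ from Proposition \ref{prop:effective-t-structure}(3), applied on shifted subcategories, yields $iE_{\ge_e 1} \in \SH(k)_{\ge 1}$ while $iE, i\ul{\pi}_0^\eff(E) \in \SH(k)_{\ge 0}$. Truncating by $\tau_{\le 0}$ in $\SH(k)$ kills the first term, so the resulting isomorphism reads $\ul{\pi}_0(iE)_* = (iE)_{\le 0} \cong (i\ul{\pi}_0^\eff(E))_{\le 0} = i^\heart \ul{\pi}_0^\eff(E)$.

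For part (2), I will use the adjunction $i \dashv r$ together with $t$-exactness of $r$ (Proposition \ref{prop:effective-t-structure}(3)) to obtain an induced adjunction $i^\heart \dashv r^\heart$, where $r^\heart$ is just the restriction of $r$. Full faithfulness of $i^\heart$ amounts to the unit $M \to r^\heart i^\heart M$ being an isomorphism for $M \in \SH(k)^{\eff,\heart}$. For this I apply $r$ to the truncation triangle $(iM)_{\ge 1} \to iM \to i^\heart M$ of $iM$ in $\SH(k)$; $t$-exactness of $r$ makes the image a truncation triangle for $riM = M$ in $\SH(k)^\eff$, so $M$ being in the effective heart identifies $r i^\heart M \cong \ul{\pi}_0^\eff(M) = M$.

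For part (3), existence of all limits and colimits in $\SH(k)^{\eff,\heart}$ follows from Lemma \ref{lemm:coproducts-heart} applied to the compactly generated category $\SH(k)^\eff$. For conservativity of $E \mapsto \ul{\pi}_0(iE)_0$, I observe that any $M \in \SH(k)^{\eff,\heart}$ satisfies $\ul{\pi}_j(iM)_0 = 0$ for all $j \neq 0$: negative $j$ by right-$t$-exactness of $i$ (giving $iM \in \SH(k)_{\ge 0}$), positive $j$ directly from the defining condition for $\SH(k)^\eff_{\le 0}$. The joint conservativity of $\{\ul{\pi}_j(\cdot)_0\}_{j \in \ZZ}$ on $\SH(k)^\eff$ from Proposition \ref{prop:effective-t-structure}(1) then forces $M = 0$ whenever $\ul{\pi}_0(iM)_0 = 0$.

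The main obstacle is preservation of limits and colimits, for which I will realize the functor as the induced heart functor of a $t$-exact right adjoint. Since $\Sigma^\infty_s$ carries each generator $\Sigma^\infty_{S^1} X_+$ into $\SH(k)^\eff$ and preserves colimits, it factors as $\Sigma^\infty_s = i \circ \tilde\Sigma$ for some $\tilde\Sigma: \SH^{S^1}(k) \to \SH(k)^\eff$, yielding an adjunction $\tilde\Sigma \dashv \Omega^\infty_s \circ i$. The right adjoint $\Omega^\infty_s \circ i$ is $t$-exact: right-$t$-exactness from its two factors, and left-$t$-exactness from the identity $\ul{\pi}_j(\Omega^\infty_s iE) = \ul{\pi}_j(E)_0$ hardwired into the definition of $\SH(k)^\eff_{\le 0}$. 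Moreover $\tilde\Sigma$ sends compact generators to compact generators, so Lemma \ref{lemm:pres-lim-colim} implies that $(\Omega^\infty_s \circ i)^\heart: \SH(k)^{\eff,\heart} \to \SH^{S^1}(k)^\heart$ preserves all limits and colimits. Under the identification from Lemma \ref{lemm:exactness}(1) this functor is $E \mapsto \ul{\pi}_0(iE)_0$, and post-composing with that same fully faithful limit- and colimit-preserving embedding into $Ab(Shv(Sm(k)_{Nis}))$ completes the argument.
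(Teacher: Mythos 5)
Your proposal is correct and matches the paper's proof in all essentials: part (1) via the long exact sequence (equivalently, $\tau_{\le 0}$) applied to $i$ of the effective truncation triangle using right-$t$-exactness of $i$; part (2) via the heart adjunction $i^\heart \dashv r^\heart$ and $t$-exactness of $r$ to get $r i^\heart \cong \id$ (the paper computes $r i^\heart E = (riE)_{\le_e 0} = E$ directly while you pass through the truncation triangle of $iM$, but it is the same observation); part (3) via Lemma \ref{lemm:coproducts-heart}, the adjunction $\Sigma^\infty_s \dashv \Omega^\infty_s\circ i$, and Lemma \ref{lemm:pres-lim-colim}, with conservativity reduced to detection of zero objects from Proposition \ref{prop:effective-t-structure}(1).
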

\begin{proof}
If $E \in \SH(k)^\eff_{\ge 0}$ then we
have the triangle $E_{\ge_e 1} \to E \to \ul{\pi}_0^\eff E$ and consequently get the
triangle $i(E_{\ge_e
1}) \to iE \to i\ul{\pi}_0^\eff E$. But $i$ is right-$t$-exact, so the end of the
associated long exact sequence of homotopy shaves is $0 = \ul{\pi}_0(iE_{\ge_e
1})_* \to \ul{\pi}_0(iE)_* \to \ul{\pi}_0(i\ul{\pi}_0^\eff(E))_* = i^\heart
\ul{\pi}_0^\eff E \to \ul{\pi}_{-1}(i(E_{\ge_e 1}))_* = 0$, whence the claimed
isomorphism of (1).

Let us now prove (2). If $E \in \SH(k)^{\eff, \heart}$ then $E = riE \iso
(riE)_{\le_e 0} \iso r[(iE)_{\le 0}] = ri^\heart(E)$, where the last equality holds by
definition, and the second to last one by $t$-exactness of $r$.
Thus $ri^\heart \iso \id$. Consequently if $E, F \in \SH(k)^{\eff,\heart}$
then $[i^\heart E, i^\heart F] = [E, ri^\heart F] \iso [E, F]$, so $i^\heart$ is
fully faithful as claimed. Here we have used the well-known fact that a $t$-exact
adjunction between triangulated categories induces an adjunction of the hearts
\cite[Proposition 1.3.17 (iii)]{beilinson1982faisceaux}.

Now we prove (3). Since $\SH(k)^\eff$ is compactly generated, existence of
limits and colimits in $\SH(k)^{\eff,\heart}$
follows from Lemma \ref{lemm:coproducts-heart}. Consider the
adjunction $\Sigma^\infty_s: \SH^{S^1}(k) \leftrightarrows \SH(k)^\eff:
\Omega^\infty_s$. Then Lemma \ref{lemm:pres-lim-colim} applies and we find that
$\SH(k)^{\eff,\heart} \to \SH^{S^1}(k)^\heart$ preserves limits and colimits.
Since $\SH^{S^1}(k)^\heart \to Ab(Shv(Sm(k)_{Nis}))$ preserves limits and colimits
by Lemma \ref{lemm:exactness}, we conclude that $\SH(k)^{\eff,\heart} \to
Ab(Shv(Sm(k)_{Nis}))$ also preserves limits and colimits.
Since the functor also detects zero
objects by Proposition \ref{prop:effective-t-structure}(1), we conclude that
it is conservative.
\end{proof}

\paragraph{Remark.}
Parts (1) and (2) of the above proof do not use any special properties of $\SH(k)$ and in fact
show more generally the following: If $\mathcal{C}, \mathcal{D}$ are
presentable stable $\infty$-categories provided with $t$-structures and
$\mathcal{C} \to \mathcal{D}$ is a right-$t$-exact, fully faithful functor, then
the induced functor $\mathcal{C}^\heart \to \mathcal{D}^\heart$ is fully
faithful (in fact a colocalization). This was pointed out to the author by Benjamin
Antieau.

\paragraph{}
Thus the functor $i^\heart$ embeds $\SH(k)^{\eff,\heart}$ into $\SH(k)^\heart$,
explaining our choice of the name ``effective homotopy module''.
We will call a homotopy module $F_* \in \SH(k)^\heart$ \emph{effective} if it is
in the essential image of $i^\heart$, i.e. if there exists $E \in
\SH(k)^{\eff,\heart}$ such that $i^\heart E \iso F$.

If $F_*$ is a homotopy module then we denote by $F_*\gmtw{i}$ the homotopy module
$\ul{\pi}_0(F \wedge \Gmp{i})_*$, which satisfies $F\gmtw{i}_* = F_{*+i}$
and has the same structure maps (just shifted by $i$ places).

\begin{lemm} \label{lemm:htpy-mods-eff}
The homotopy module $\ul{K}_*^{MW}$ of Milnor-Witt K-theory is effective.
Moreover if $F_*$ is an effective homotopy module then so are $F_*\gmtw{i}$ for all
$i\ge 0$. Also cokernels of morphisms of effective homotopy modules are
effective, as are (more generally) colimits of effective homotopy modules.
\end{lemm}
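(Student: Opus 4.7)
The plan is to prove the four assertions in order, with parts (3) and (4) sharing essentially the same argument.

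For part (1), I observe that $S = \Sigma^\infty \mathrm{pt}_+$ is one of the generators of $\SH(k)^\eff$, so in particular $S \in \SH(k)^\eff_{\ge 0}$. By definition $\ul{K}^{MW}_* = \ul{\pi}_0(S)_*$, and Proposition \ref{prop:more-iheart}(1) identifies this with $i^\heart \ul{\pi}_0^\eff(S)$, which is effective by construction.

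For part (2), given an effective homotopy module $F_* = i^\heart M$ with $M \in \SH(k)^{\eff,\heart}$, I want to identify $F_*\gmtw{i} = \ul{\pi}_0(iM \wedge \Gmp{i})_*$ with $i^\heart$ of something. The plan is to show that $iM \wedge \Gmp{i} \in \SH(k)^\eff_{\ge 0}$, so that Proposition \ref{prop:more-iheart}(1) applies directly. First, $\SH(k)^\eff$ is closed under smash products because it is the localising subcategory generated by the $\Sigma^\infty X_+$ and $(X \times Y)_+ = X_+ \wedge Y_+$; in particular $\Gmp{i}$ itself is effective, being a summand of $\Sigma^\infty \Gm^{\times i}_+$. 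Second, to see connectivity, I use the generators of $\SH(k)^\eff_{\ge 0}$ from Proposition \ref{prop:effective-t-structure}(2): it suffices to check that $\Sigma^\infty X_+ \wedge S^n \wedge \Gmp{i}$ lies in $\SH(k)^\eff_{\ge 0}$ for $n \ge 0$ and $X \in Sm(k)$, and this is clear since it is a summand of the manifestly connective effective spectrum $\Sigma^\infty (X \times \Gm^i)_+ \wedge S^n$. Passing through colimits and extensions preserves both effectivity and connectivity, and the conclusion follows.

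For parts (3) and (4), since cokernels are a special kind of colimit, it suffices to show that colimits of effective homotopy modules are effective. The cleanest route is to observe that $i^\heart$ preserves all colimits. Indeed, since $i \dashv r$ and $r$ is $t$-exact by Proposition \ref{prop:effective-t-structure}(3), the standard machinery (e.g.\ \cite[Proposition 1.3.17(iii)]{beilinson1982faisceaux}) produces an induced adjunction $i^\heart \dashv r^\heart$ on the hearts, so $i^\heart$ is a left adjoint and therefore preserves all colimits. Given a diagram $\{F_j\}$ of effective homotopy modules, write $F_j = i^\heart M_j$, form $M := \colim_j M_j$ in $\SH(k)^{\eff,\heart}$ (which exists by Proposition \ref{prop:more-iheart}(3)), and conclude $\colim_j F_j = \colim_j i^\heart M_j = i^\heart M$ is effective. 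The main technical point in the whole proof is really the connectivity verification in step (2); the rest is formal manipulation of the $t$-structure and its adjunctions.
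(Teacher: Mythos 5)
Your proof is correct and follows essentially the same route as the paper's: part (1) identifies $\ul{K}^{MW}_*$ via Proposition \ref{prop:more-iheart}(1) applied to the effective, connective sphere; part (2) shows $M \wedge \Gmp{i}$ stays in $\SH(k)^\eff_{\ge 0}$ and applies the same proposition; parts (3)--(4) use that $i^\heart$ is a left adjoint and hence cocontinuous. The only cosmetic differences are that you verify connectivity in part (2) by reducing to the generators $\Sigma^\infty X_+ \wedge S^n$ from Proposition \ref{prop:effective-t-structure}(2) (the paper instead deduces $i(M \wedge \Gmp{i}) \in \SH(k)_{\ge 0}$ directly from right-$t$-exactness of $i$ and $t$-exactness of $\wedge \Gmp{i}$), and that you fold cokernels into the general colimit argument rather than treating them first; both choices are sound. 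One small point worth making explicit, which neither you nor the paper states: to write a diagram $\{F_j\}$ of effective homotopy modules as $i^\heart$ applied to a diagram $\{M_j\}$ in $\SH(k)^{\eff,\heart}$ (including the transition maps), one needs the full faithfulness of $i^\heart$ from Proposition \ref{prop:more-iheart}(2), not just essential-image membership objectwise.
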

In particular the following homotopy modules are effective: $\ul{K}_*^M =
coker(\eta: \ul{K}_*^{MW}\gmtw{1} \to \ul{K}_*^{MW})$,
$\ul{K}_*^W = coker(h: \ul{K}^{MW}_* \to \ul{K}^{MW}_*)$ as are e.g. $K_*^M/p,
K_*^{MW}[1/p]$ etc.
\begin{proof}
The sphere spectrum $S$ is effective, non-negative and satisfies
$\ul{\pi}_0(S)_* = \ul{K}_*^{MW}$. Hence Milnor-Witt K-theory is effective by
Proposition \ref{prop:more-iheart} part (1).

Now let $E$ in $\SH(k)^{\eff, \heart}$. Then for $i \ge 0$ we have $i(E \wedge
\Gmp{i}) \in \SH(k)_{\ge 0}$ and so $E \wedge \Gmp{i} \in \SH(k)^\eff_{\ge 0}$.
Consequently $(i^\heart E)\gmtw{i} = E_{\le 0} \wedge \Gmp{i} = (E \wedge \Gmp{i})_{\le 0} = i^\heart[(E \wedge
\Gmp{i})_{\le_e 0}]$ is effective. Here the last equality is by
Proposition \ref{prop:more-iheart}(1).

Let $E \to F \in \SH(k)^{\eff,\heart}$ be a morphism and form the right exact
sequence $E \to F \to C \to 0$. Since $i^\heart$ has a right adjoint it is right
exact, whence $i^\heart E \to i^\heart F \to i^\heart C \to 0$ is right exact.
It follows that the cokernel of $i^\heart(E) \to i^\heart(F)$ is $i^\heart(C)$,
which is effective.

Finally $i^\heart$ preserves colimits, again since it has a right adjoint, so
colimits of effective homotopy modules are effective by a similar argument.
\end{proof}

\section{The Generalized Slice Filtration}
\label{sec:generalized-slice-filtration}

We put $\SH(k)^\veff = \SH(k)^\eff_{\ge 0}$ and for $n \in \ZZ$, $\SH(k)^\eff(n)
= \SH(k)^\eff \wedge T^{\wedge n}$, $\SH(k)^\veff(n) = \SH(k)^\veff \wedge
T^{\wedge n}$. Here $T = \Aone/\Gm \wequi (\mathbb{P}^1, \infty) \wequi S^1
\wedge \Gm$ denotes the Tate object. These are the categories of (very)
$n$-effective spectra (we just say ``(very) effective'' if $n=0$).

Write $i_n: \SH(k)^\eff(n) \to \SH(k)$ for the inclusion, $r_n: \SH(k) \to
\SH(k)^\eff(n)$ for the right adjoint, put $f_n = i_n r_n$ and define $s_n$ as
the cofibre $f_{n+1} E \to f_n E \to s_n E$. This is of course the slice
filtration \cite[Section 2]{voevodsky-slice-filtration}.

Similarly we write $\tilde{i}_n: \SH(k)^\veff(n) \to \SH(k)$ for the inclusion.
There is a right adjoint $\tilde{r}_n$ (see for example the proof of Lemma \ref{lemm:fn-trunc} below),
and we put $\tilde{f}_n = \tilde{i}_n
\tilde{r}_n$. This is the generalized slice filtration
\cite[Definition 5.5]{spitzweck2012motivic}. We denote by $\tilde{s}_n(E)$ a
cone on $\tilde{f}_{n+1} E \to \tilde{f}_n E$. This depends functorially on $E$:

\begin{lemm} \label{lemm:slices-functorial}
There exist a functor $\tilde{s}_0: \SH(k) \to \SH(k)$ and natural
transformations $p: \id \Rightarrow \tilde{s}_0$ and $\partial: \tilde{s}_0
\Rightarrow \tilde{f}_1[1]$, all determined up to unique
isomorphism, such that for each $E \in \SH(k)$ the following triangle is
distinguished: $\tilde{f}_1(E) \to \tilde{f}_0(E) \xrightarrow{p_E}
\tilde{s}_0(E) \xrightarrow{\partial_E} \tilde{f}_1E[1]$.

Moreover, for $E, F \in \SH(k)$ we have $[\tilde{f}_1(E)[1], \tilde{s}_0 F] =
0$.
\end{lemm}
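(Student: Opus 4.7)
The plan is to establish the Hom-vanishing statement first, and then use it as the sole ingredient to promote a pointwise cone to a functor and to obtain uniqueness up to unique isomorphism.

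For the vanishing, I would apply $[\tilde{f}_1 E[1], -]$ to the defining distinguished triangle $\tilde{f}_1 F \to \tilde{f}_0 F \to \tilde{s}_0 F \to \tilde{f}_1 F[1]$ (where $\tilde{s}_0 F$ is any pointwise choice of cone). Two ingredients suffice. First, from the description of $\SH(k)^\veff(1)$ in terms of generators $\Sigma^\infty X_+ \wedge S^n \wedge T$ with $n \ge 0$, this subcategory is closed under $[1]$ and contained in $\SH(k)^\veff$; so $\tilde{f}_1 E[1]$ lies in both. Second, by the adjunction $\tilde{i}_n \dashv \tilde{r}_n$ together with full faithfulness of $\tilde{i}_n$, the unit $\tilde{f}_n H \to H$ induces an isomorphism $[G, \tilde{f}_n H] \iso [G, H]$ for any $G \in \SH(k)^\veff(n)$. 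Combining these, the map $[\tilde{f}_1 E[1], \tilde{f}_1 F] \to [\tilde{f}_1 E[1], \tilde{f}_0 F]$ and its shift $[\tilde{f}_1 E[1], \tilde{f}_1 F[1]] \to [\tilde{f}_1 E[1], \tilde{f}_0 F[1]]$ both become the identity on $[\tilde{f}_1 E[1], F]$, respectively $[\tilde{f}_1 E[1], F[1]]$. Sandwiched between these isomorphisms in the long exact sequence, $[\tilde{f}_1 E[1], \tilde{s}_0 F]$ must vanish.

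With the vanishing in hand, the rest is formal obstruction theory in triangulated categories. For each $E$ fix any distinguished triangle $\tilde{f}_1 E \to \tilde{f}_0 E \xrightarrow{p_E} \tilde{s}_0 E \xrightarrow{\partial_E} \tilde{f}_1 E[1]$. Given $f: E \to E'$, axiom (TR3) provides a fill-in $\tilde{s}_0(f): \tilde{s}_0 E \to \tilde{s}_0 E'$; the difference of two such fill-ins is killed by precomposition with $p_E$, so by the exact sequence $[\tilde{f}_1 E[1], \tilde{s}_0 E'] \to [\tilde{s}_0 E, \tilde{s}_0 E'] \to [\tilde{f}_0 E, \tilde{s}_0 E']$ it comes from $[\tilde{f}_1 E[1], \tilde{s}_0 E'] = 0$ and is therefore zero. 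Thus $\tilde{s}_0(f)$ is unique, which automatically makes $\tilde{s}_0$ a functor and $p, \partial$ natural transformations. Running the same argument for a morphism between two triples $(\tilde{s}_0, p, \partial) \to (\tilde{s}_0', p', \partial')$ produces a unique natural transformation $\phi: \tilde{s}_0 \Rightarrow \tilde{s}_0'$ compatible with the $p$'s and $\partial$'s, and this is a natural isomorphism by the five lemma.

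The only real work is thus the vanishing computation, and even that is routine once one identifies the relevant Hom groups via the universal property of $\tilde{f}_n$. The main care needed is verifying naturality of the identifications along the unit maps $\tilde{f}_n H \to H$, so that the connecting maps in the long exact sequence really do become identities rather than merely isomorphisms.
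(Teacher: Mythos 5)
Your argument is correct and mirrors the paper's: prove the vanishing $[\tilde{f}_1 E[1], \tilde{s}_0 F]=0$ from the universal property of $\tilde{f}_1$, then deduce existence, functoriality, and uniqueness of the cone by standard triangulated-category obstruction theory (the paper simply cites \cite[Proposition 1.1.9]{beilinson1982faisceaux} for this formal step, which you rederive by hand). One small remark: in the vanishing step you identify both $[\tilde{f}_1 E[1],\tilde{f}_1 F]$ and $[\tilde{f}_1 E[1],\tilde{f}_0 F]$ with $[\tilde{f}_1 E[1],F]$, which relies on the containment $\SH(k)^\veff(1)\subset\SH(k)^\veff$; this is true but is not immediate from the generator description alone (one way: $T$ is a retract of $\Sigma^\infty\mathbb{P}^1_+$ and $\SH(k)^\veff$ is closed under retracts, being closed under coproducts and cofibres), whereas the paper sidesteps the issue by using $\tilde{f}_1 \tilde{f}_0 F\wequi \tilde{f}_1 F$ together with $\tilde{f}_1 E[1]\in\SH(k)^\veff(1)$ directly.
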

\begin{proof}
By \cite[Proposition 1.1.9]{beilinson1982faisceaux} it
suffices to show the ``moreover'' part. We may as well show that if $E \in \SH(k)^\veff(1), F \in
\SH(k)$ then $[E[1], \tilde{s}_0 F] = 0$. Considering the long exact sequence
\[ [E[1], \tilde{f}_1 F] \xrightarrow{\alpha} [E[1], \tilde{f}_0 F] \to [E[1],
\tilde{s}_0 F] \to [E, \tilde{f}_1 F] \xrightarrow{\beta} [E, \tilde{f}_0 F] \]
it is enough to show that $\alpha$ and $\beta$ are isomorphisms. This is clear
since $E, E[1] \in \SH(k)^\veff(1)$ and $\tilde{f}_1 \tilde{f}_0 F \wequi
\tilde{f}_1 F$.
\end{proof}

The following lemmas will feature ubiquitously in the sequel. Recall that the
$f_i$ are triangulated functors.

\begin{lemm} \label{lemm:fn-T-smash}
For $E \in \SH(k)$ we have
\[ T \wedge f_n(E) \wequi f_{n+1}(T \wedge E) \]
and
\[ T \wedge \tilde{f}_n(E) \wequi \tilde{f}_{n+1}(T \wedge E). \]
\end{lemm}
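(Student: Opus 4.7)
The plan is to deduce both isomorphisms from a single formal mechanism: the auto-equivalence $\Sigma_T := T \wedge (-)$ of $\SH(k)$ restricts to equivalences of the two families of subcategories indexing the (very) effective covers, and the universal property of the right adjoint is then transported along this equivalence.

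First I would record that $\Sigma_T: \SH(k) \to \SH(k)$ is an equivalence with quasi-inverse $\Sigma_T^{-1} = T^{-1} \wedge (-)$, since $T$ is $\otimes$-invertible in $\SH(k)$. By the definitions $\SH(k)^\eff(n+1) = \SH(k)^\eff \wedge T^{\wedge n+1}$ and $\SH(k)^\veff(n+1) = \SH(k)^\veff \wedge T^{\wedge n+1}$, the functor $\Sigma_T$ restricts to equivalences $\SH(k)^\eff(n) \xrightarrow{\sim} \SH(k)^\eff(n+1)$ and $\SH(k)^\veff(n) \xrightarrow{\sim} \SH(k)^\veff(n+1)$, and these restrictions are compatible with the inclusions $i_n, i_{n+1}$ (resp. $\tilde{i}_n, \tilde{i}_{n+1}$) into $\SH(k)$ up to the evident strict equality. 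In other words, the squares
\[
\tilde{i}_{n+1} \circ \Sigma_T = \Sigma_T \circ \tilde{i}_n, \qquad i_{n+1} \circ \Sigma_T = \Sigma_T \circ i_n
\]
commute.

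Next I would pass to right adjoints. A commutative square of left adjoints gives, upon inverting any equivalences present, a commutative square of right adjoints. Applying this to the square above with $\Sigma_T$ replaced by its inverse yields $\tilde{r}_{n} \circ \Sigma_T^{-1} \wequi \Sigma_T^{-1} \circ \tilde{r}_{n+1}$, hence
\[
\tilde{r}_{n+1} \circ \Sigma_T \wequi \Sigma_T \circ \tilde{r}_{n}.
\]
Composing with $\tilde{i}_{n+1}$ on the left and using $\tilde{i}_{n+1} \circ \Sigma_T = \Sigma_T \circ \tilde{i}_n$ produces the natural equivalence
\[
\tilde{f}_{n+1}(T \wedge E) = \tilde{i}_{n+1}\tilde{r}_{n+1}\Sigma_T E \wequi \Sigma_T \tilde{i}_n \tilde{r}_n E = T \wedge \tilde{f}_n(E),
\]
which is the second claim. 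The first claim follows by exactly the same argument applied to $i_n, r_n$ in place of $\tilde{i}_n, \tilde{r}_n$.

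There is no real obstacle; the only point one must be careful about is the passage from ``commuting square of left adjoints'' to ``commuting square of right adjoints''. This is entirely standard when the horizontal arrows are equivalences, as is the case here, and so the entire lemma is essentially formal once one has granted existence of the adjoints $r_n$ and $\tilde{r}_n$ (the latter being produced, e.g., as in the proof of Lemma \ref{lemm:fn-trunc} referenced immediately after).
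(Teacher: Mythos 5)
Your proof is correct, and the underlying idea is the same as the paper's: the autoequivalence $T \wedge (-)$ of $\SH(k)$ carries $\SH(k)^\eff(n)$ (resp.\ $\SH(k)^\veff(n)$) to $\SH(k)^\eff(n+1)$ (resp.\ $\SH(k)^\veff(n+1)$), and the identifications then follow formally. The paper expresses this by spelling out the Yoneda computation $[X \wedge T, T \wedge f_n E] \iso [X, f_n E] \iso [X, E] \iso [X \wedge T, T \wedge E] \iso [X \wedge T, f_{n+1}(T \wedge E)]$ for $X \in \SH(k)^\eff(n)$, whereas you package the identical calculation as transport of right adjoints along a commuting square of equivalences; the two are interchangeable.
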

\begin{proof}
We have $\SH(k)^\eff(n+1) = \SH(k)^\eff(n) \wedge T$. Now for $X \in
\SH(k)^\eff(n)$ we compute
\[ [X \wedge T, T \wedge f_n E] \iso [X, f_n E] \iso [X, E] \iso [X \wedge T, E \wedge T]
    \iso [X \wedge T, f_{n+1}(E \wedge T)]. \]
Thus $T \wedge f_n E \wequi f_{n+1} (T \wedge E)$ by the Yoneda lemma.

The proof
for $\tilde{f}_n$ is exactly the same, with $\SH(k)^\eff$ replaced by
$\SH(k)^\veff$ and $f_\bullet$ replaced by $\tilde{f}_\bullet$.
\end{proof}

We obtain a $t$-structure on $\SH(k)^\eff(n)$ by ``shifting'' the $t$-structure on
$\SH(k)^\eff$ by $\Gmp{n}$. In other words, if $E \in \SH(k)^\eff(n)$
then $E \in \SH(k)^\eff(n)_{\ge 0}$ if and only if $\ul{\pi}_i(E)_{-n} = 0$ for all
$i < 0$. Since $\wedge \Gmp{n}: \SH(k)^\eff \to \SH(k)^\eff(n)$ is an
equivalence of categories, all the properties established in the previous
section apply to $\SH(k)^\eff(n)$ as well, suitably reformulated. In particular
$\SH(k)^\eff(n)_{\ge 0}$ \emph{is} the non-negative part of a $t$-structure. We denote
the associated truncation by $E \mapsto E_{\ge_{e,n}0} \in \SH(k)^\eff(n)_{\ge
0}$, and so on.

\begin{lemm} \label{lemm:fn-shifted}
Denote by $j_n: \SH(k)^\eff(n+1) \to \SH(k)^\eff(n)$ the canonical inclusion.
The restricted functor $f_{n+1}: \SH(k)^\eff(n) \to \SH(k)^\eff(n+1)$ right adjoint
to $j_n$ and is $t$-exact, and $j_n$ is right $t$-exact.
\end{lemm}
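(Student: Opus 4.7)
The first task is to exhibit the adjunction, which is essentially formal. For $F \in \SH(k)^\eff(n+1)$ and $E \in \SH(k)^\eff(n)$, full faithfulness of the inclusions into $\SH(k)$ yields
\begin{equation*}
\Hom_{\SH(k)^\eff(n)}(j_n F, E) = [F, E]_{\SH(k)} = [F, i_{n+1} r_{n+1} E]_{\SH(k)} = \Hom_{\SH(k)^\eff(n+1)}(F, f_{n+1} E),
\end{equation*}
the middle equality using that $r_{n+1}$ is right adjoint to $i_{n+1}$ and that $F$ already lies in $\SH(k)^\eff(n+1)$. In particular $f_{n+1} E \in \SH(k)^\eff(n+1)$, so the restricted functor is well-defined.

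The key input for the $t$-exactness claims is the identification
\begin{equation*}
\SH(k)^\eff(m)_{\ge 0} = \SH(k)^\eff(m) \cap \SH(k)_{\ge 0},
\end{equation*}
valid for every $m$. The $\supset$ direction is immediate, since connectivity in $\SH(k)$ forces $\ul{\pi}_i(E)_j = 0$ for $i < 0$ at every weight $j$, in particular $j = -m$. For the converse, setting $E' := E \wedge \Gmp{-m} \in \SH(k)^\eff$ one has $\ul{\pi}_i(E')_0 = \ul{\pi}_i(E)_{-m} = 0$ for $i < 0$, so $E' \in \SH(k)^\veff$; by the remark following Proposition \ref{prop:effective-t-structure} this gives $E' \in \SH(k)_{\ge 0}$, and smashing back with $\Gmp{m}$ preserves $\SH(k)_{\ge 0}$ because $\ul{\pi}_i(E' \wedge \Gmp{m})_j = \ul{\pi}_i(E')_{j+m}$.

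With this identification the remaining claims follow quickly. The cofibre sequence $\Sigma^\infty X_+ \to \Sigma^\infty(X \times \Gm)_+ \to \Sigma^\infty X_+ \wedge \Gm$ shows that $\SH(k)^\eff$ is closed under $\wedge \Gm$, whence $\SH(k)^\eff(n+1) \subset \SH(k)^\eff(n)$; intersecting with $\SH(k)_{\ge 0}$ yields $\SH(k)^\veff(n+1) \subset \SH(k)^\veff(n)$, so $j_n$ is right $t$-exact. For right $t$-exactness of $f_{n+1}$: the endofunctor $f_{n+1} = i_{n+1} r_{n+1}$ of $\SH(k)$ is right $t$-exact by Proposition \ref{prop:effective-t-structure}(3), so $E \in \SH(k)^\eff(n)_{\ge 0} \subset \SH(k)_{\ge 0}$ forces $f_{n+1} E \in \SH(k)^\eff(n+1) \cap \SH(k)_{\ge 0} = \SH(k)^\eff(n+1)_{\ge 0}$. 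Left $t$-exactness of $f_{n+1}$ is then automatic, since a right adjoint to a right $t$-exact functor is always left $t$-exact. The main obstacle in this plan is establishing the identification in the previous paragraph; once it is in place, no delicate computation of the homotopy sheaves of $f_{n+1} E$ (which would otherwise require invoking Lemma \ref{lemm:fn-T-smash}) is needed.
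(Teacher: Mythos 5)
Your proof is correct, but it follows a different route from the paper's. The paper argues directly: by adjunction $\ul{\pi}_i(f_{n+1}E)_{-n-1} = \ul{\pi}_i(E)_{-n-1}$, and by the homotopy-module structure this equals the contraction $(\ul{\pi}_i(E)_{-n})_{-1}$, which vanishes whenever $\ul{\pi}_i(E)_{-n}$ does; hence $f_{n+1}$ is $t$-exact in one stroke, and right $t$-exactness of $j_n$ is then a formal consequence (left adjoint of a left $t$-exact functor). You instead first establish the identification $\SH(k)^\eff(m)_{\ge 0} = \SH(k)^\eff(m) \cap \SH(k)_{\ge 0}$, from which $j_n$-right-exactness and $f_{n+1}$-right-exactness both follow by soft arguments, and then recover $f_{n+1}$-left-exactness from the adjunction. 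Both approaches are valid, and they in fact turn out to be of comparable ``depth'': your appeal to Proposition \ref{prop:effective-t-structure}(3) for the right $t$-exactness of $f_{n+1} = i_{n+1} r_{n+1}$ as an endofunctor of $\SH(k)$ implicitly relies on the shifted version of that proposition for $\SH(k)^\eff(n+1)$, which is exactly the ``reformulated'' extension the paper sanctions just before this lemma --- so you aren't really escaping the shift-by-$\Gm$ considerations, just as the paper's one-line use of the homotopy-module contraction isn't a ``delicate computation''. The intersection formula you prove is a genuinely useful observation (it also subsumes the Remark after Proposition \ref{prop:effective-t-structure}), but the paper's contraction trick is arguably shorter once the $t$-structure on $\SH(k)^\eff(n)$ is set up.
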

\begin{proof}
Adjointness is clear. Let $E \in \SH(k)^\eff(n)$. We have $\ul{\pi}_i(f_{n+1}E)_{-n-1} =
\ul{\pi}_i(E)_{-n-1} = (\ul{\pi}_i(E)_{-n})_{-1}$. In particular if
$\ul{\pi}_i(E)_{-n} = 0$ then $\ul{\pi}_i(f_{n+1}E)_{-n-1} = 0$, which proves
that $f_{n+1}$ is $t$-exact. Then $j_n$ is right $t$-exact, being left adjoint
to a left $t$-exact functor.
\end{proof}

\begin{lemm} \label{lemm:fn-trunc}
Let $E \in \SH(k)$. Then
\[ \tilde{f}_nE \wequi i_n(r_n(E)_{\ge_{e,n} n}) \wequi f_n(E_{\ge n}). \]
\end{lemm}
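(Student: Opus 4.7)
The plan is to construct $\tilde{r}_n$ explicitly as $r_n(\cdot)_{\ge_{e,n} n}$, and then to verify both equivalences by universal-property arguments. The cornerstone is the identification of $\SH(k)^\veff(n)$ with the $n$-connective part of the effective $t$-structure on $\SH(k)^\eff(n)$. Specifically, since $\SH(k)^\eff$ is triangulated and $T \simeq S^1 \wedge \Gm$, one has $\SH(k)^\veff(n) = \SH(k)^\veff \wedge T^{\wedge n} = (\SH(k)^\veff[n]) \wedge \Gmp{n}$. The homological convention gives $\SH(k)^\veff[n] = \SH(k)^\eff_{\ge 0}[n] = \SH(k)^\eff_{\ge n}$, and the defining equivalence $\wedge \Gmp{n}\colon \SH(k)^\eff \to \SH(k)^\eff(n)$ transports the effective $t$-structure to the shifted one, yielding $\SH(k)^\eff_{\ge n} \wedge \Gmp{n} = \SH(k)^\eff(n)_{\ge n}$.

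For the first equivalence, $i_n(r_n(E)_{\ge_{e,n} n})$ lies in $\SH(k)^\veff(n)$ by this identification. For any $F = i_n F'$ with $F' \in \SH(k)^\eff(n)_{\ge n}$, the adjunction $i_n \dashv r_n$, the universal property of the $n$-connective cover, and the fully faithfulness of $i_n$ combine to give
$$[F, E]_{\SH(k)} \simeq [F', r_n E] \simeq [F', r_n(E)_{\ge_{e,n} n}] \simeq [F, i_n(r_n(E)_{\ge_{e,n} n})]_{\SH(k)}.$$
This simultaneously establishes existence of the adjoint $\tilde{r}_n = r_n(\cdot)_{\ge_{e,n} n}$ and yields the first equivalence.

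For the second equivalence $i_n(r_n(E)_{\ge_{e,n} n}) \simeq f_n(E_{\ge n})$, I would show $r_n(E_{\ge n}) \simeq r_n(E)_{\ge_{e,n} n}$ and then apply $i_n$. Working out the universal property of $r_n$ directly gives the identity $r_n = (\wedge \Gmp{n}) \circ r_0 \circ (\wedge \Gmp{-n})$; combining this with Morel's $t$-exactness of $\wedge \Gmp{-n}$ and the $t$-exactness of $r_0$ from Proposition~\ref{prop:effective-t-structure}(3) yields $r_n(E_{\ge n}) \in \SH(k)^\eff(n)_{\ge n}$. The Hom calculation
$$[G, r_n(E_{\ge n})] = [i_n G, E_{\ge n}] = [i_n G, E] = [G, r_n E]$$
for $G \in \SH(k)^\eff(n)_{\ge n}$, where the middle equality uses $i_n G \in \SH(k)_{\ge n}$ from the first step, then identifies $r_n(E_{\ge n})$ with $r_n(E)_{\ge_{e,n} n}$ by Yoneda.

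The main obstacle, in my view, is the initial identification $\SH(k)^\veff(n) = \SH(k)^\eff(n)_{\ge n}$: it requires careful tracking of the three $t$-structures (homotopy on $\SH(k)$, effective on $\SH(k)^\eff$, shifted-effective on $\SH(k)^\eff(n)$) and the interplay between the simplicial shift and $\Gm$-smash. Once this is in hand, both equivalences reduce to formal manipulation of adjoints and $t$-structure truncations, making the rest essentially automatic.
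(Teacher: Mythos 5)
Your argument is correct and takes essentially the same approach as the paper: both proofs hinge on the identification $\SH(k)^\veff(n) = \SH(k)^\eff(n)_{\ge n}$, the factorization of the right adjoint along the chain $\SH(k)^\veff(n) \subset \SH(k)^\eff(n) \subset \SH(k)$, and the $t$-exactness of $r$ from Proposition \ref{prop:effective-t-structure}(3). The only organizational difference is that the paper first reduces to $n=0$ using Lemma \ref{lemm:fn-T-smash} together with compatibility of the truncations with the $T$-twist, whereas you handle general $n$ directly by conjugating $r_n$ by $\wedge\Gmp{n}$; the substance is the same.
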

\begin{proof}
Note that $\wedge T: \SH(k)^\eff(n) \to \SH(k)^\eff(n+1)$ induces equivalences
$\SH(k)^\eff(n)_{\ge n} \to \SH(k)^\eff(n+1)_{\ge n+1}$ and similarly for the
non-negative parts. It follows that for $E \in \SH(k)^\eff(n)$ we have
$E_{\ge_{e,n} n} \wedge T \wequi (E \wedge T)_{\ge_{e,n+1}n+1}$. Similarly we
find that
for $E \in \SH(k)$ we have $(E \wedge T)_{\ge n+1} \wequi E_{\ge n} \wedge T$.
Together with Lemma \ref{lemm:fn-T-smash} this implies that the current lemma
holds for some $n$ if and only if it holds for $n+1$ (and all $E$).
We may thus assume that $n=0$.

We have a factorisation
of inclusions $\SH(k)^\veff(0) \to \SH(k)^\eff(0) \to \SH(k)$ and hence the
right adjoint factors similarly. But the right adjoint to $\SH(k)^\veff(0) \to
\SH(k)^\eff(0)$ is truncation in the effective $t$-structure by definition, whence the
first equivalence. The second equivalence follows from $t$-exactness of $r$,
i.e. Proposition \ref{prop:effective-t-structure} part (3).
\end{proof}

From now on, we will write $f_n E_{\ge m}$ when convenient. This shall always
mean $f_n(E_{\ge m})$ and never $f_n(E)_{\ge m}$. This is the same
as $i_n(r_n(E)_{\ge_{e,n} m})$. In particular in calculations, we will similarly write $s_n E_{\ge m}$
to mean $s_n(E_{\ge m})$, never $s_n(E)_{\ge m}$. We will also from now on
mostly write $f_0$ in place of $i_0 r_0$; whenever we make statements like
``$f_0$ is $t$-exact'' we really mean that $f_0: \SH(k) \to \SH(k)^\eff$ is
$t$-exact.

\begin{lemm} \label{lemm:s0-decomp}
For $E \in \SH(k)$ there exist natural  triangles
\begin{equation} \label{eq:s0-decomp-1}
  s_0(E_{\ge 1}) \to \tilde{s}_0(E) \to f_0(\ul{\pi}_0(E)_*).
\end{equation}
and
\begin{equation} \label{eq:s0-decomp-2}
  f_1(\ul{\pi}_0(E)_*) \to \tilde{s}_0(E) \to s_0(E_{\ge 0}).
\end{equation}
\end{lemm}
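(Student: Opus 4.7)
\textbf{Proof plan for Lemma \ref{lemm:s0-decomp}.}

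The plan is to rewrite $\tilde{s}_0(E)$ as the cofibre of a morphism in $\SH(k)^\eff$ using Lemma \ref{lemm:fn-trunc}, and then apply the octahedral axiom twice after factoring that morphism in two different ways.

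First I would use Lemma \ref{lemm:fn-trunc} to identify $\tilde{f}_0 E \wequi f_0(E_{\ge 0})$ and $\tilde{f}_1 E \wequi f_1(E_{\ge 1})$, so that $\tilde{s}_0(E)$ is (canonically, by Lemma \ref{lemm:slices-functorial}) a cone on the composite
\[ f_1(E_{\ge 1}) \longrightarrow f_0(E_{\ge 1}) \longrightarrow f_0(E_{\ge 0}), \]
where the first map is the unit (after restricting along $\SH(k)^\eff(1) \hookrightarrow \SH(k)^\eff$) and the second is induced by the truncation map $E_{\ge 1} \to E_{\ge 0}$. Equally well, $\tilde{s}_0(E)$ is a cone on the alternative factorisation
\[ f_1(E_{\ge 1}) \longrightarrow f_1(E_{\ge 0}) \longrightarrow f_0(E_{\ge 0}). \]

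Next, applying the octahedral axiom to the first factorisation gives a triangle relating the three cones: the cone on $f_1(E_{\ge 1}) \to f_0(E_{\ge 1})$ is $s_0(E_{\ge 1})$ by definition; the cone on the composite is $\tilde{s}_0(E)$; and the cone on $f_0(E_{\ge 1}) \to f_0(E_{\ge 0})$ is $f_0(\ul{\pi}_0(E)_*)$, because $f_0$ is $t$-exact (Proposition \ref{prop:effective-t-structure}(3)) and so takes the homotopy $t$-structure truncation triangle $E_{\ge 1} \to E_{\ge 0} \to \ul{\pi}_0(E)_*$ to a distinguished triangle. This yields precisely \eqref{eq:s0-decomp-1}.

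Applying the octahedral axiom to the second factorisation works the same way: the cone on $f_1(E_{\ge 0}) \to f_0(E_{\ge 0})$ is $s_0(E_{\ge 0})$, the cone on the composite is $\tilde{s}_0(E)$, and the cone on $f_1(E_{\ge 1}) \to f_1(E_{\ge 0})$ is $f_1(\ul{\pi}_0(E)_*)$, because the triangulated functor $f_1$ takes the truncation triangle to a distinguished triangle. This yields \eqref{eq:s0-decomp-2}. Naturality in $E$ is automatic, since every constituent (the truncation functors $(-)_{\ge n}$, the effective covers $f_n$, and $\tilde{s}_0$) is functorial, and the octahedral construction of the connecting triangle is natural in the given composite of morphisms. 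There is no real obstacle here; the only subtlety to check is that the identifications coming from Lemma \ref{lemm:fn-trunc} are natural enough to let the octahedral axiom produce a genuine natural triangle, which follows from the adjoint characterisations of $f_n$ and $\tilde{f}_n$.
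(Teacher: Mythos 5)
Your octahedral approach is a genuinely different route from the paper's. You factor the single map $f_1(E_{\ge 1}) \to f_0(E_{\ge 0})$ (which under Lemma \ref{lemm:fn-trunc} is $\tilde{f}_1 E \to \tilde{f}_0 E$) in two ways and take cones; the paper instead identifies the two triangles as the canonical truncation triangles of $\tilde{s}_0(E)$ itself, namely $\tilde{s}_0(E)_{\ge_e 1} \to \tilde{s}_0(E) \to \ul{\pi}_0^\eff(\tilde{s}_0 E)$ in the effective $t$-structure and $f_1\tilde{s}_0(E) \to \tilde{s}_0(E) \to s_0\tilde{s}_0(E)$ in the slice filtration. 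The existence half of your argument is fine: both $i_0 r_0$ and $i_1 r_1$ are triangulated (composites of exact adjoints), so each sends the truncation triangle $E_{\ge 1} \to E_{\ge 0} \to \ul{\pi}_0(E)_*$ to a triangle, which identifies the "third cone" in each octahedron as claimed. (Minor slip: what is used here is that $f_0, f_1$ are triangulated functors, not that they are $t$-exact.)

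The genuine gap is the naturality claim. You assert that "the octahedral construction of the connecting triangle is natural in the given composite of morphisms," but this is famously false in a general triangulated category: the maps $C_f \to C_{gf} \to C_g$ whose existence the octahedral axiom asserts are not uniquely determined, and a natural transformation of inputs need not lift to a morphism of octahedra. This is not a pedantry: it is precisely what the paper works to rule out. The paper's Lemma \ref{lemm:slices-functorial}, together with \cite[Proposition 1.1.9]{beilinson1982faisceaux} applied to vanishing statements like $[f_1(E_{\ge_e 1})[1], \tilde{s}_0 E] = 0$, shows that the comparison maps in its diagrams are unique, which is what actually forces functoriality. Your proof would need an analogous vanishing argument to pin down the octahedral connecting maps; "the adjoint characterisations of $f_n$ and $\tilde{f}_n$" do not by themselves supply it. The cleanest repair is probably to do what the paper does: show your octahedral triangles coincide with the canonical effective-$t$-structure and slice truncation triangles of $\tilde{s}_0(E)$, which are functorial by construction.
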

Of course, there are variants of this lemma for $\tilde{s}_n$, obtained for
example by using $\tilde{s}_n(E) \wequi \tilde{s}_0(E \wedge T^{-\wedge n})
\wedge T^{\wedge n}$.
\begin{proof}
I claim that there are canonical isomorphisms (1) $s_0(E_{\ge 1}) \wequi
\tilde{s}_0(E)_{\ge_e 1}$, (2) $f_0(\ul{\pi}_0(E)_*) \wequi
\ul{\pi}_0^\eff(\tilde{s}_0 E)$, (3) $f_1(\ul{\pi}_0(E)_*) \wequi f_1
\tilde{s}_0(E)$ and (4) $s_0(E_{\ge 0}) \wequi s_0 \tilde{s}_0(E)$. Since
$\tilde{s}_0(E) \in \SH(k)^\eff_{\ge 0}$, the two purported triangles are thus
the functorial triangles $\tilde{s}_0(E)_{\ge_e 1} \to \tilde{s}_0(E) \wequi
\tilde{s}_0(E)_{\ge_e 0} \to \ul{\pi}_0^\eff \tilde{s}_0(E)$ and $f_1
\tilde{s}_0(E) \to \tilde{s}_0(E) \wequi f_0 \tilde{s}_0(E) \to s_0
\tilde{s}_0(E)$.

(1) We have $s_0(E_{\ge 1}) \wequi s_0 f_0(E_{\ge 1}) \wequi s_0
\tilde{f}_0(E)_{\ge_e 1}$, since $f_0$ is $t$-exact. In particular we may assume
that $E \in \SH(k)^\veff$. Consider the following commutative diagram
\begin{equation*}
\begin{CD}
                 @.               @.  \tilde{s}_0(E)_{\ge_e 1} \\
        @.              @.                @Vp'VV \\
\tilde{f}_1 E   @>>> E           @>>> \tilde{s}_0 E @>>> \tilde{f}_1(E)[1] @>>> E[1] \\
@|                 @ApAA                  @AhAA                 @| @A{p[1]}AA  \\
f_1(E_{\ge_e 1}) @>>> E_{\ge_e 1} @>>> s_0(E_{\ge_e 1}) @>>> f_1(E_{\ge_e 1} )[1] @>>> E_{\ge_e 1}[1]
\end{CD}
\end{equation*}
Here the rows are triangles and the maps $p$ and $p'$ are the
canonical ones. The map $h$ is induced. Using that $[f_1(E_{\ge_e 1} )[1],
\tilde{s}_0 E]$ by Lemma \ref{lemm:slices-functorial} and \cite[Proposition
1.1.9]{beilinson1982faisceaux}, we see that $h$ is in fact the \emph{unique} morphism
rendering the diagram commutative. We have $f_1(E_{\ge_e 1}) \in \SH(k)^\eff_{\ge
1}$, by Lemma \ref{lemm:fn-shifted}. It follows that $s_0(E_{\ge_e 1}) \in \SH(k)^\eff_{\ge 1}$. Of course
also $\tilde{s}_0(E)_{\ge_e 1} \in \SH(k)^\eff_{\ge 1}$. Let $T \in
\SH(k)^\eff_{\ge 1}$. Then $p'$ induces $[T, \tilde{s}_0(E)_{\ge_e 1}]
\xrightarrow{\iso} [T, s_0(E_{\ge_e 1})]$. Hence by Yoneda, it suffices to
show that $h$ induces $[T, s_0(E_{\ge_e 1})] \xrightarrow{\iso} [T,
\tilde{s}_0 E]$. By the 5-lemma it suffices to show that $[T, E_{\ge_e 1}]
\xrightarrow{\iso} [T, E]$ which is clear by definition, and that $[T,
E_{\ge_e 1}[1]] \to [T, E[1]]$ is injective. This follows from the exact
sequence $0= [T, E_{\le_e 0}] \to [T, E_{\ge_e 1}[1]] \to [T, E[1]]$.

(2) We have $\ul{\pi}_0^\eff(\tilde{s}_0 E) \wequi \ul{\pi}_0^\eff(\tilde{f}_0
E) \wequi \ul{\pi}_0^\eff(f_0 E)$,
since $\ul{\pi}_0^\eff(\tilde{f}_1E) = 0 = \ul{\pi}_{-1}^\eff(\tilde{f}_1(E))$,
again by Lemma \ref{lemm:fn-shifted}. We conclude since $f_0: \SH(k) \to
\SH(k)^\eff$ is $t$-exact by Proposition \ref{prop:effective-t-structure}(3).

(3) We have the two canonical triangles $\tilde{f}_0(E)_{\ge_e 1}
\to \tilde{f_0} E \to \ul{\pi}_0^\eff(f_0 E) \wequi f_0 \ul{\pi}_0(E)_*$ (using
$t$-exactness of $f_0$) and $\tilde{f}_1 E \to \tilde{f}_0 E \to \tilde{s}_0 E$.
The middle terms are canonically isomorphic, and the left terms become
canonically isomorphic after applying $f_1$. There is thus an induced
isomorphism $f_1 \ul{\pi}_0(E)_* \to f_1 \tilde{s}_0(E)$, which is in fact
unique by \cite[Proposition 1.1.9]{beilinson1982faisceaux}, provided we show
that $[\tilde{f}_1(E)[1], f_1 \tilde{s}_0(E)] = 0$. This follows from the exact
sequence $[\tilde{f}_1(E)[1], s_0(\tilde{s}_0(E))[1]] \to [\tilde{f}_1(E)[1],
f_1 \tilde{s}_0(E)] \to [\tilde{f}_1(E)[1], \tilde{s}_0(E)]$, Lemma
\ref{lemm:slices-functorial}, and the analogue of Lemma
\ref{lemm:slices-functorial} for ordinary slices. We can quickly prove this
analogue: if
$E \in \SH(k)^\eff(1)$ and $F \in \SH(k)$, then $[E, s_0(F)] = 0$, since $[E,
f_1 F[i]] \iso [E, f_0 F[i]]$ for all $i$.

(4) We have $s_0(E_{\ge 0}) \wequi s_0 f_0(E_{\ge 0}) \wequi s_0(f_0(E)_{\ge_e
0}) \wequi s_0\tilde{f}_0(E)$. Since $s_0$ is a triangulated functor, we have a
canonical triangle $s_0 \tilde{f}_1 E \to s_0 \tilde{f}_0 E \to
s_0 \tilde{s}_0 E$. Since $\tilde{f}_1E \in \SH(k)^\eff(1)$ we have $s_0
\tilde{f}_1 E  \wequi 0$, and hence we obtain the required isomorphism.
\end{proof}

\paragraph{Notation.} We define spectra for (generalized) motivic cohomology
theory as effective covers:
\begin{gather*}
  H_\mu \ZZ := f_0 \ul{K}^{M}_* \\
  H_\mu \ZZ/2 := f_0 \ul{K}^{M}_*/2 \\
  \tilde{H} \ZZ := f_0 \ul{K}^{MW}_* \\
  H_W \ZZ = f_0 \ul{K}^{W}_*.
\end{gather*}
Here $\ul{K}^{MW}_* \in \SH(k)^\heart$ denotes the homotopy module of Milnor-Witt
$K$-theory \cite[Chapter 3]{A1-alg-top}, i.e. $\ul{\pi}_0(S)_*$, where $S$ is
the sphere spectrum \cite[Theorem 6.40]{A1-alg-top}. Also $\ul{K}_*^W :=
\ul{K}_*^{MW}/h$ is the homotopy module of Witt $K$-theory \cite[Example 3.33]{A1-alg-top}.
Similarly
for the other right hand sides. This terminology is justified by the following
result.

\begin{lemm}
Let $S \to \ul{K}^M_*$ denote the composite $S \to \ul{\pi}_0(S)_* \simeq
\ul{K}_*^{MW} \to \ul{K}_*^M$. Then the canonical maps $s_0(S) \to
s_0(\ul{K}^M_*) \leftarrow f_0 \ul{K}^M_*$ are
equivalences. Moreover there is a canonical equivalence $f_0(\ul{K}^M_*)/2
\wequi f_0(\ul{K}^M_*/2)$, where on the left hand side we mean a cone on a
morphism in $\SH(k)$ and on the right hand side $\ul{K}^M_*/2 \in \SH(k)^\heart$
denotes the \emph{cokernel}.
\end{lemm}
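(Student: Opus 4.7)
The statement packages three assertions: (A) $f_1(\ul{K}^M_*) = 0$, so that the canonical map $f_0 \ul{K}^M_* \to s_0 \ul{K}^M_*$ is an equivalence; (B) the induced map $s_0(S) \to s_0(\ul{K}^M_*)$ is an equivalence; and (C) the ``moreover'' equivalence $f_0(\ul{K}^M_*)/2 \wequi f_0(\ul{K}^M_*/2)$. The unifying tool throughout is Proposition \ref{prop:effective-t-structure}(1), which lets me verify $f_0 E = 0$ for $E \in \SH(k)$ by checking $\ul{\pi}_i(E)_0 = 0$ for all $i$, since $\ul{\pi}_i(rE)_0 = \ul{\pi}_i(E)_0$ by adjunction.

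For (A), Lemma \ref{lemm:fn-T-smash} reduces the vanishing of $f_1(\ul{K}^M_*)$ to that of $f_0(\Omega_T \ul{K}^M_*)$. I would observe that $\Omega_T \ul{K}^M_* \wequi \ul{K}^M_*\langle -1\rangle[-1]$, whose only nonvanishing weight-zero homotopy sheaf sits in degree $i = -1$ and equals $\ul{K}^M_{-1} = 0$ (Milnor $K$-theory vanishes in negative degrees). By the reduction above, $f_0(\Omega_T \ul{K}^M_*) = 0$, hence $f_1 \ul{K}^M_* = 0$.

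The main obstacle is (B). My plan is to invoke the Voevodsky-Levine identification $s_0 S \wequi H\ZZ$, together with $\ul{\pi}_0(s_0 S)_* \wequi \ul{K}^M_*$ (Nesterenko-Suslin-Totaro) and the classical vanishing $\ul{\pi}_i(s_0 S)_0 = 0$ for $i \neq 0$ of weight-zero motivic cohomology of a point. The map $S \to \ul{K}^M_*$ then factors canonically as $S \to s_0 S \to \ul{\pi}_0(s_0 S)_* \wequi \ul{K}^M_*$, the second arrow being Postnikov truncation with fibre $(s_0 S)_{\geq 1}$. By the universal property of $f_0$ (combined with (A)), the induced map $s_0 S \to f_0 \ul{K}^M_* \wequi s_0 \ul{K}^M_*$ is an equivalence iff $f_0((s_0 S)_{\geq 1}) = 0$, which reduces, via the weight-zero homotopy-sheaf criterion, to the vanishing above. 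I expect this to be the hard step precisely because of its reliance on substantial external input; a more self-contained approach through the factorization $S \to \ul{K}^{MW}_* \to \ul{K}^M_*$ would seem to require establishing $1$-effectivity of both $S_{\geq 1}$ and $\ker(\eta: \ul{K}^{MW}_{*+1} \to \ul{K}^{MW}_*)$, which appears comparably difficult.

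For (C), $f_0 = i \circ r$ is triangulated and so preserves cones, giving $f_0(\ul{K}^M_*)/2 \wequi f_0(\mathrm{cof}(2: \ul{K}^M_* \to \ul{K}^M_*))$. The long exact sequence of homotopy sheaves yields a Postnikov triangle $T[1] \to \mathrm{cof}(2) \to \ul{K}^M_*/2$, where $T = \ker(2: \ul{K}^M_* \to \ul{K}^M_*) \in \SH(k)^\heart$ is the $2$-torsion subsheaf. Since $T_0 = \ker(2: \ZZ \to \ZZ) = 0$ and $T$ lies in the heart, all $\ul{\pi}_i(T)_0$ vanish, so $f_0(T) = 0$, and the triangle yields the desired equivalence.
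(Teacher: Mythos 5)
Your proof is correct, and parts (A) and (B) follow essentially the same route as the paper: you use $\ul{K}^M_{-1}=0$ for (A), and for (B) you use the identification of $s_0 S$ with motivic cohomology plus the concentration $\ul{\pi}_i(s_0 S)_0 = 0$ for $i\ne 0$. Your organization of (B) is slightly different — you factor $S \to \ul{K}^M_*$ through $s_0 S \to \ul{\pi}_0(s_0 S)_* \cong \ul{K}^M_*$ and reduce to $f_0((s_0 S)_{\ge 1})=0$, whereas the paper checks a $\ul{\pi}_0$-level epimorphism via $[S,S] \twoheadrightarrow [S, s_0 S]$ — but these play the same role and rely on the same external input (one additional compatibility your write-up uses silently is that the map $\ul{\pi}_0(S)_* = \ul{K}^{MW}_* \to \ul{\pi}_0(s_0 S)_*$ is the canonical quotient under the Nesterenko--Suslin--Totaro identification). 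Where you genuinely diverge is part (C). The paper descends to the abelian category $\SH(k)^{\eff,\heart}$: it invokes exactness of $f_0$ on hearts (Proposition~\ref{prop:more-iheart}(3) and Lemma~\ref{lemm:exactness}), identifies $f_0(\ul{K}^M_*/2)$ with the cokernel of $2$, and notes cokernel equals cone because $2$ is injective. You stay entirely in the triangulated world: $f_0$ preserves cones, the Postnikov triangle $T[1] \to \mathrm{cof}(2) \to \ul{K}^M_*/2$ with $T = \ker(2) \in \SH(k)^\heart$, and the conservativity criterion (Proposition~\ref{prop:effective-t-structure}(1)) killing $f_0(T)$ since $T_0 = 0$. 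Your route buys a more self-contained argument — it needs none of the effective-heart structure theory the paper develops in the second half of Section~\ref{sec:effective-spectra} — while the paper's route is more in the spirit of the abelian-category bookkeeping it uses systematically elsewhere; either way the decisive input is the same, namely $\ker(2:\ZZ\to\ZZ)=0$.
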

In particular,
the spectra $H_\mu \ZZ$ and $H_\mu \ZZ/2$
represent motivic cohomology in the sense of Bloch's higher Chow groups
\cite[Theorems 6.5.1 and 9.0.3]{levine2008homotopy}. This agrees with
Voevodsky's definition of motivic cohomology \cite[Theorem
19.1]{lecture-notes-mot-cohom} (recall that our base field is perfect). Note
that if $C$ is a cone on $\ul{K}_*^M \xrightarrow{2} \ul{K}_*^M$, then $f_0(C)$
is a cone on $H_\mu \ZZ \xrightarrow{2} H_\mu \ZZ$, i.e. also canonically
isomorphic to $H_\mu \ZZ/2$. In other words in the notation $f_0 \ul{K}^M_*/2$
it does not matter if we view $\ul{K}^M_*/2$ as a cone or cokernel.
\begin{proof}
Since $\ul{K}^M_{-1} = 0$ we have $f_1 \ul{K}^M_* = 0$ and so $f_0 \ul{K}^M_*
\to s_0 \ul{K}^M_*$ is an equivalence. The spectrum $s_0 S$ represents motivic
cohomology \cite[Theorems 6.5.1 and 9.0.3]{levine2008homotopy} \cite[Theorem
19.1]{lecture-notes-mot-cohom} and hence $\ul{\pi}_0(s_0 S)_0 = \ZZ$ whereas
$\ul{\pi}_i(s_0 S)_0 = 0$ for $i \ne 0$. Similarly we have just from the
definitions that
$\ul{\pi}_0(f_0 \ul{K}^M_*)_0 = \ZZ$ and $\ul{\pi}_i(f_0 \ul{K}^M_*)_0 =
0$ for $i \ne 0$. Thus the map $s_0 S \to s_0 \ul{K}^M_* \wequi f_0 \ul{K}^M_*$ induces
an isomorphism on all $\ul{\pi}_i(\bullet)_0$, provided that $[S, s_0S] \to
[S, s_0 \ul{K}_*^M]$ is an epimorphism. But $[S, S] \to [S, s_0 S]$ and $[S, S]
\to [S, \ul{K}_*^M] = [S, f_0 \ul{K}_*^M] = [S, s_0 \ul{K}_*^M]$ are both
epimorphisms, so this is true. The first claim now follows from Proposition
\ref{prop:effective-t-structure}(1).

For the second claim, note that it follows from Propositions
\ref{prop:more-iheart}(3) and Lemma \ref{lemm:exactness} that $f_0:
\SH(k)^\heart \to \SH(k)^{\eff,\heart}$ is exact. Thus $f_0(\ul{K}^M_*/2)$ is
the cokernel of $\alpha: f_0(\ul{K}^M_*) \xrightarrow{2} f_0(\ul{K}^M_*) \in
\SH(k)^{\eff,\heart}$ and it remains to show that this cokernel is
isomorphic to the cone of $\alpha$. This happens if and only if $\alpha$ is
injective, which is clear since under the conservative exact functor from
Proposition \ref{prop:more-iheart}(3), $\alpha$ just corresponds to $\ZZ
\xrightarrow{2} \ZZ$.
\end{proof}

\paragraph{Philosophy.}
The complex realisation of $\Gm$ is $S^1$ and the complex realisation of $T$ is
$S^2$. We propose to think of the generalized slices as some kind of
``motivic (stable) 1-types''. Note that ordinary slices, as well as objects of
$\SH(k)^\heart$ and $\SH(k)^{\eff, \heart}$ would all be reasonable candidates
for ``motivic 0-types''. Triangle \eqref{eq:s0-decomp-1} shows that every
motivic 1-type can be canonically decomposed into an element of
$\SH(k)^{\eff,\heart}$ and a zero-slice (i.e. a birational motive) -- both of
which we think of as different kinds motivic 0-types. Thus in triangle
\eqref{eq:s0-decomp-1} we think of $f_0\ul{\pi}_0(E)_*$ as the $\pi_0$ part of the
motivic 1-type $\tilde{s}_0 E$ and of $s_0(E_{\ge 1})$ as the $\pi_1$ part of the
motivic 1-type. Of course, triangle \eqref{eq:s0-decomp-2} shows that every
motivic 1-type can be canonically decomposed into an element of $\SH(k)^{\eff,
\heart} \wedge \Gm$ and a zero-slice, which are again motivic 0-types. Thus in triangle
\eqref{eq:s0-decomp-2} we think of $s_0 E_{\ge 0}$ as the $\pi_0$-part of the
motivic 1-type and of $f_1 \ul{\pi}_0(E)_*$ as the $\pi_1$-part.

\section{The Generalized Slices of Hermitian K-Theory}
\label{sec:slices-herm-k}

We shall now compute $\tilde{s}_n(\KO)$. Recall that there exist motivic spaces
$GW^{[n]} \in Spc_*(k)$ which represent Hermitian $K$-theory and come
with a canonical weak equivalence
$\Omega_T GW^{[n]} \wequi GW^{[n-1]}$. Thus they can be assembled into a motivic
$T$-spectrum $\KO = (GW^{[0]}, GW^{[1]}, \dots) \in \SH(k)$ also representing Hermitian
$K$-theory \cite{hornbostel2005a1}. We remind that this spectrum is \emph{not} connective. We will write $\KO^{[n]} := \KO \wedge T^{\wedge n}$.
Observe that under the adjunction $\Sigma^\infty: Ho(Spc_*(k))
\leftrightarrows \SH(k): \Omega^\infty$ we have $\Omega^\infty(\KO^{[n]}) \wequi GW^{[n]}$.

By Bott periodicity, we have $GW^{[n+4]} \wequi GW^{[n]}$ and so $\KO^{[n+4]}
\wequi \KO^{[n]}$ \cite[Proposition 7]{schlichting2010mayer}.
Also recall the low-degree Hermitian $K$-groups
from Table \ref{tab:ko-groups}. This table can be deduced for example from the
identification of $GW^{[n]}_0$ with the Balmer-Walter Grothendieck-Witt groups
\cite[Lemma 8.2]{schlichting2016hermitian} and \cite[Theorem
10.1]{walter2003grothendieck}.

\begin{table}[tb]
\center
\begin{tabular}{c|c}
$n$ & $\ul{\pi}_0(GW^{[n]})$ \\
\hline
$0$ & $\ul{GW}$ \\
$1$ & $0$ \\
$2$ & $\ZZ$ \\
$3$ & $\ZZ/2$ \\
\end{tabular}
\caption{Low degree Hermitian $K$-groups as strictly homotopy invariant
Nisnevich sheaves.}
\label{tab:ko-groups}
\end{table}

\begin{lemm} \label{lemm:s0-KQH}
We have $f_0 \ul{\pi}_0(\KO^{[2]})_* \wequi H_\mu \ZZ$, and the natural induced map
$s_0 \KO^{[2]}_{\ge 0} = s_0 f_0 \KO^{[2]}_{\ge 0} \to s_0 f_0
\ul{\pi}_0(\KO^{[2]})_* \wequi s_0 H_\mu \ZZ \wequi H_\mu \ZZ$ is an isomorphism.
\end{lemm}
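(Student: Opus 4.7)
The plan is as follows. I would first identify $\ul{\pi}_0(\KO^{[2]})_0$: the adjunction $\Sigma^\infty \dashv \Omega^\infty$ gives $\Omega^\infty \KO^{[2]} \wequi GW^{[2]}$, so Table \ref{tab:ko-groups} yields $\ul{\pi}_0(\KO^{[2]})_0 = \ul{\pi}_0(GW^{[2]}) = \ZZ$. Since both $f_0 \ul{\pi}_0(\KO^{[2]})_*$ and $H_\mu \ZZ = f_0 \ul{K}^M_*$ lie in the effective heart with weight-zero sheaf $\ZZ$, by the conservativity statement of Proposition \ref{prop:more-iheart}(3) the first claim reduces to constructing a natural morphism between them which induces the identity on weight zero.

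Such a comparison morphism should be produced from the geometric representability of symplectic $K$-theory by quaternionic Grassmannians due to Panin-Walter and Schlichting-Tripathi: $\Omega^\infty \KO^{[2]} \wequi \colim_n HGr(n,2n) =: HGr$. One natural candidate is the forget-the-form map $\KO^{[2]} \to KGL$ composed with the known identification $s_0 KGL \wequi H_\mu \ZZ$; another is constructed from the basepoint section $\mathrm{pt} \hookrightarrow HGr$, which yields after adjunction a map $S \to \KO^{[2]}$ factoring through $\KO^{[2]}_{\ge 0}$ by connectivity. Either route produces the desired morphism of effective homotopy modules, and the geometric input of Panin-Walter can be used to check that the induced map $\ZZ \to \ZZ$ on weight zero is the identity.

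For the second statement, I would apply $s_0 \circ f_0$ to the truncation triangle $\KO^{[2]}_{\ge 1} \to \KO^{[2]}_{\ge 0} \to \ul{\pi}_0(\KO^{[2]})_*$; the rightmost term becomes $s_0 H_\mu \ZZ \wequi H_\mu \ZZ$ by the first part. It thus remains to show $s_0 f_0 \KO^{[2]}_{\ge 1} = 0$. The Panin-Walter cellular decomposition of $HGr$, whose cells beyond the basepoint are all of the form $T^{\wedge 2m}$ with $m \ge 1$, implies that $\tilde{f}_0 \KO^{[2]} = f_0 \KO^{[2]}_{\ge 0}$ is built from the $0$-cell $S$ by extensions in $\SH(k)^\eff(2) \subset \SH(k)^\eff(1)$; consequently $f_0 \KO^{[2]}_{\ge 1} \in \SH(k)^\eff(1)$, which $s_0$ annihilates.

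The main obstacle is assembling the geometric input rigorously: transferring the Panin-Walter unstable cellular decomposition of $HGr$ into a statement about the cellular structure of the very effective cover $\tilde{f}_0 \KO^{[2]}$, and verifying that the comparison map realises the identity on $\ul{\pi}_0(\cdot)_0$. This is where Voevodsky's Proposition 4.4 (cited in the introduction) and the detailed Panin-Walter geometry of quaternionic Grassmannians are expected to do the heavy lifting.
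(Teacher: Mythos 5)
Your high-level plan matches the paper's: identify $f_0\ul{\pi}_0(\KO^{[2]})_*$ with $H_\mu\ZZ$ using the unit map and the conservativity of Proposition \ref{prop:more-iheart}(3), then reduce the second statement to showing $s_0 f_0\KO^{[2]}_{\ge 1} = 0$ via Panin--Walter representability and Voevodsky's slice machinery. Two details differ, and the second is a genuine gap.

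For the first part, the paper does not construct a direct comparison morphism $f_0\ul{\pi}_0(\KO^{[2]})_* \to H_\mu\ZZ$. Instead it produces two maps \emph{out of} $\tilde{H}\ZZ = f_0\ul{\pi}_0(S)_*$: the map $\alpha$ induced by the unit $1 \in [S,\KO^{[2]}] = \ZZ$, and the canonical $\beta\colon \tilde{H}\ZZ \to H_\mu\ZZ$. Via Proposition \ref{prop:more-iheart}(3) one checks that both are epimorphisms in $\SH(k)^{\eff,\heart}$ with the same kernel (both become the rank map $\ul{GW}\to\ZZ$), so the two quotients are canonically isomorphic. This sidesteps the issue you flag only implicitly: two objects of $\SH(k)^{\eff,\heart}$ with the same underlying sheaf could a priori differ as effective homotopy modules (the transfers could disagree), so matching the weight-zero sheaf is not by itself enough; what saves you is having \emph{one} source $\tilde{H}\ZZ$ mapping onto both with the same kernel.

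For the second part, the claim that ``the Panin--Walter cellular decomposition of $HGr$ implies that $\tilde{f}_0\KO^{[2]} = f_0\KO^{[2]}_{\ge 0}$ is built from the $0$-cell $S$ by extensions in $\SH(k)^\eff(2)$'' does not follow and is not what the paper does. The geometric input $GW^{[2]} \wequi \ZZ\times HGr$ is an identification of motivic \emph{spaces}, i.e.\ of $\Omega^\infty\KO^{[2]}$; it does not hand you a cell structure on the $\mathbb{P}^1$-spectrum $\tilde{f}_0\KO^{[2]}$, which as an infinite-loop object has no cell structure inherited from its zeroth space. The correct bridge---and this is exactly what Voevodsky's Proposition 4.4 and Lemma 4.6 supply---is the detour through the $S^1$-stable category: $\Omega^\infty_s\colon \SH(k)^\eff \to \SH^{S^1}(k)$ is conservative and commutes with the slice functors, so it suffices to compute $s_0^{S^1}(\Omega^\infty_s\KO^{[2]}_{\ge 0})$. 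One then identifies $\Omega^\infty\KO^{[2]}_{\ge 0} \wequi \Omega^\infty\KO^{[2]} \wequi \ZZ\times HGr$ using connectivity of suspension spectra, and the Panin--Walter geometry (homotopy purity applied to the stratification of $HGr(m,n)$ by $N^+(m,n)$ and $Y(m,n)$) shows $\Sigma^\infty_s HGr(m,n)$ is $1$-effective in $\SH^{S^1}(k)$, whence Voevodsky's formal argument gives $s_0^{S^1}(\Omega^\infty_s\KO^{[2]}_{\ge 0}) \wequi \ZZ$. Your proposal compresses this into a direct cell-structure claim for the very effective cover that is simply not available; the proof has to stay at the level of spaces and $S^1$-spectra until the very end.
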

\begin{proof}
Consider the map $S \to \KO^{[2]}$ corresponding to $1 \in \ZZ = [S,
\KO^{[2]}]$. It induces $\alpha: \tilde{H}\ZZ = f_0 \ul{\pi}_0(S)_* \to f_0
\ul{\pi}_0(\KO^{[2]})_* \in \SH(k)^{\eff,\heart}$.
We also have the canonical map $\beta: \tilde{H}\ZZ \to
H_\mu \ZZ \in \SH(k)^{\eff,\heart}$. I claim that $\alpha$ and $\beta$ are
surjections with equal kernels. Indeed this may be checked after applying the
conservative exact functor from Proposition \ref{prop:more-iheart}(3), where
both maps correspond to the canonical map $\ul{GW} \to \ZZ$. It follows that
there is a canonical isomorphism $f_0 \ul{\pi}_0 \KO^{[2]} \wequi H_\mu \ZZ$.
(The point of this elaboration is that even
though we know that $f_0 \ul{\pi}_0 \KO^{[2]}$ and $H_\mu \ZZ$ are objects of
$\SH(k)^{\eff,\heart}$ which have the same underlying homotopy sheaf, a priori
the transfers could be different.)

The rest of the proof essentially uses an argument of Voevodsky \cite[Section
4]{voevodsky293possible}. Write
\[ \Sigma^\infty_s \SH^{S^1}(k) \leftrightarrows \SH(k): \Omega^\infty_s \]
for the canonical adjunction. Note that one may define a slice filtration for
$\SH^{S^1}(k)$ in just the same way as for $\SH(k)$: Let $\SH^{S^1}(k)(n) \subset \SH^{S^1}(k)$ denote the
localizing subcategory generated
by $T^n \wedge E$ for $E \in \SH^{S^1}(k)$. Then the inclusion $i'_n:
\SH^{S^1}(k)(n) \to \SH^{S^1}(k)$ has a right adjoint $r'_n$, one puts $f'_n =
i'_n r'_n$, and $s'_n E$ is defined to be the cofiber of $f'_{n+1} E \to f'_n E$
\cite[Section 7.1]{levine2008homotopy}.

It is enough to show that $f_0(\KO^{[2]}_{\ge 1})
\in \SH(k)^\eff(1)$, i.e. that $s_0(\KO^{[2]}_{\ge 1}) \wequi 0$. The functor
$\Omega^\infty_s: \SH(k)^\eff \to \SH^{S^1}(k)$ is conservative \cite[Lemma
3.3]{voevodsky293possible} and commutes with taking slices \cite[Theorems 9.0.3
and 7.1.1]{levine2008homotopy}. Since also $\Omega^\infty_s H_\mu \ZZ = \ZZ$ we
find that it is enough to show that $s_0^{S^1}(\Omega^\infty_s \KO^{[2]}_{\ge 0})
\wequi \ZZ$. (This is precisely how Voevodsky computes $s_0 KGL$, but since his
conjectures have been proved by Levine our result is unconditional.)

I claim that $\Omega^\infty \KO^{[2]}_{\ge 0} \wequi \Omega^\infty \KO^{[2]}
\wequi GW^{[2]} \in
Spc_*(k)$. To see this, note that for any $E \in \SH(k)$ and $U \in Spc_*(k)$ we have $[U,
\Omega^\infty E] = [\Sigma^\infty U, E] = [\Sigma^\infty U, E_{\ge 0}] = [U,
\Omega^\infty E_{\ge 0}]$, where for the middle equality we have used that
$\Sigma^\infty U \in \SH(k)_{\ge 0}$. Indeed as explained in Section
\ref{sec:recollections} this holds for $U = X_+$ with $X \in Sm(k)$, the
category $Spc_*(k)$ is generated under homotopy colimits by spaces of the form
$X_+$, the functor $\Sigma^\infty$ preserves homotopy colimits, and $\SH(k)_{\ge
0}$ is closed under homotopy colimits.

The geometric representability theorem of Panin-Walter
\cite{panin2010motivic} (for the case of symplectic $K$-theory, which is all we
need here) and Schlichting-Tripathi \cite{schlichting2015geometric} (for the
general case) implies that $GW^{[2]} \wequi \ZZ \times HGr$.
Thus by
\cite[Proposition 4.4 and proof of Lemma 4.6]{voevodsky293possible} the required
computation $s_0^{S^1}(\Omega^\infty \KO^{[2]}_{\ge 0})
\wequi \ZZ$ follows
from the next result (which is completely analogous
to \cite[Lemma 4.7]{voevodsky293possible}).
\end{proof}

\begin{lemm}
Let $HGr(m,n)$ denote the quaternionic Grassmannian of \cite{panin2010quaternionic},
with its canonical base point. Then $\Sigma^\infty_s HGr(m,n) \in
\SH^{S^1}(k)(1)$; in
other words there exists $E \in \SH^{S^1}(k)$ such that $\Sigma^\infty_s
HGr(m,n) \wequi E \wedge T$.
\end{lemm}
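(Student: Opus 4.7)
The plan is to induct on $n - m$ using the cellular structure of $HGr(m,n)$ established by Panin-Walter, converting it into a filtration of $\Sigma^\infty_s HGr(m,n)$ whose graded pieces are Thom spaces of positive-rank vector bundles, each of which lies in $\SH^{S^1}(k)(1)$.

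First I would record the following auxiliary fact: for any $Y \in Sm(k)$ and any vector bundle $\xi$ of rank $r \geq 1$ on $Y$, the Thom space $\mathrm{Th}(\xi)$ lies in $\SH^{S^1}(k)(r) \subset \SH^{S^1}(k)(1)$. Indeed, $\xi$ is Zariski-locally trivial, so $\mathrm{Th}(\xi)$ can be written as a homotopy colimit (over the Čech nerve of a trivialising Zariski cover) of terms of the form $U_+ \wedge T^{\wedge r}$ with $U \in Sm(k)$, and the subcategory $\SH^{S^1}(k)(r)$, being localizing, is closed under homotopy colimits.

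Next I would invoke \cite{panin2010quaternionic}, which provides a closed embedding $\iota : HGr(m, n-1) \hookrightarrow HGr(m, n)$ whose open complement is an affine bundle over $HGr(m-1, n-1)$. A dimension count gives codimension $4m(n-m) - 4m(n-m-1) = 4m$, and this embedding is compatible with the canonical basepoints (both coming from the same distinguished symplectic subspace). Morel-Voevodsky homotopy purity then yields a cofibre sequence of pointed smooth schemes
\[ HGr(m, n-1) \to HGr(m, n) \to \mathrm{Th}(\nu), \]
where $\nu$ is the rank-$4m$ normal bundle of $\iota$.

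Finally I would conclude by induction on $n - m \geq 0$. The base case $n = m$ reduces to $HGr(m, m) \wequi \mathrm{pt}$, so $\Sigma^\infty_s HGr(m, m) \wequi 0 \in \SH^{S^1}(k)(1)$; the case $m = 0$ is likewise trivial. For the inductive step with $m \geq 1$, $\Sigma^\infty_s HGr(m, n-1) \in \SH^{S^1}(k)(1)$ by the inductive hypothesis, and $\Sigma^\infty_s \mathrm{Th}(\nu) \in \SH^{S^1}(k)(4m) \subset \SH^{S^1}(k)(1)$ by the auxiliary fact; since $\SH^{S^1}(k)(1)$ is closed under cofibres, the cofibre sequence forces $\Sigma^\infty_s HGr(m, n) \in \SH^{S^1}(k)(1)$. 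The main obstacle is extracting from \cite{panin2010quaternionic} precisely the closed embedding of positive codimension with compatible basepoints described above, rather than some other geometric recursion; once that geometric input is secured, the proof is a formal quaternionic analog of Voevodsky's \cite[Lemma 4.7]{voevodsky293possible} for ordinary Grassmannians.
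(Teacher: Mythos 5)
Your overall strategy---homotopy purity, Thom spaces of positive-rank bundles being $1$-effective since they are Zariski-locally $T^{\wedge r}$-suspensions, and a recursion through the Panin--Walter geometry of quaternionic Grassmannians---is exactly the one the paper uses, and your auxiliary fact is correct. However, there is a genuine error in how you orient the homotopy purity cofibre sequence, and it propagates into your induction.

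Homotopy purity, for a closed smooth $Z \subset X$ with open complement $U = X \setminus Z$, gives the triangle
\[ \Sigma^\infty_s U_+ \to \Sigma^\infty_s X_+ \to \Sigma^\infty_s Th(N_{Z/X}); \]
the \emph{open complement}, not the closed subscheme, appears as the first term, while the closed subscheme enters only through the Thom space of its normal bundle. You have declared $HGr(m,n-1)$ to be the closed piece of codimension $4m$ and also placed it as the first term of the triangle, which would instead be $X/Z$, not a Thom space. Correcting the orientation, the first term should be the open complement, which by Panin--Walter (\cite[Theorem 5.1]{panin2010quaternionic}) is $\Aone$-weakly equivalent to $HGr(m-1,n-1)$; the closed piece $N^+(m,n)$ (a vector bundle over $HGr(m,n-1)$, hence $\Aone$-equivalent to it, \cite[Theorem 4.1(a)]{panin2010quaternionic}) contributes only via $Th(N_{N^+/HGr})$, which is $1$-effective by your auxiliary fact. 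Thus after the two-out-of-three reduction one is left with $HGr(m-1,n-1)$, not $HGr(m,n-1)$. Since both indices drop by one, $n-m$ is unchanged and your induction on $n-m$ makes no progress; the induction has to run on $m$ (with $m = 0$ the trivial base case), as the paper does.
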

\begin{proof}
If $X$ is a smooth scheme, $U$ an open subscheme and $Z$ the closed complement
which also happens to be smooth, then by homotopy purity \cite[Theorem
3.2.23]{A1-homotopy-theory} there is a triangle
\[ \Sigma^\infty_s U_+ \to \Sigma^\infty_s X_+ \to \Sigma^\infty_s Th(N_{Z/X}), \]
where $N_{Z/X}$ denotes the normal bundle and $Th$ the Thom space (which is
canonically pointed). It follows from the octahedral axiom (for example) that we
may also use a base point inside $U \subset X$, i.e. that there is a  triangle
\[ \Sigma^\infty_s U \to \Sigma^\infty_s X \to \Sigma^\infty_s Th(N_{Z/X}) \]
(provided that $U$ is pointed, of course).

As a next step, if $E$ is a trivial vector bundle (of positive rank $r$) on $Z$ then
$\Sigma^\infty_s Th(E) \wequi T^{\wedge r} \wedge \Sigma^\infty Z_+ \in \SH^{S^1}(k) \wedge T$ is
1-effective. Since all vector bundles are Zariski-locally trivial and
$\SH^{S^1}(k) \wedge T$ is closed under homotopy colimits, the same holds
for an arbitrary vector bundle (of everywhere positive rank).
Consequently $Th(N_{Z/X}) \in \SH^{S^1}(k)
\wedge T$ and so $\Sigma^\infty_s X \in \SH^{S^1}(k) \wedge T$ if and only if
$\Sigma^\infty_s U \in \SH^{S^1}(k) \wedge T$.

We finally come to quaternionic Grassmannians. The space $HGr(m,n)$ has a closed
subscheme $N^+(m, n)$,
with open complement $Y(m,n)$ everywhere of positive codimension
\cite[Introduction]{panin2010quaternionic}. The space $N^+(m,n)$ is a vector bundle over
$HGr(m,n-1)$ \cite[Theorem 4.1(a)]{panin2010quaternionic} and so smooth.
The open complement $Y(m,n)$ is $\Aone$-weakly
equivalent to $HGr(m-1,n-1)$ \cite[Theorem 5.1]{panin2010quaternionic}.

Consequently $\Sigma^\infty_s HGr(m,n) \in \SH^{S^1}(k) \wedge T$ if and only if
$\Sigma^\infty_s HGr(m-1,n-1) \in \SH^{S^1}(k) \wedge T$. The claim is clear if
$m=0$, so the general case follows.
\end{proof}

We will use the following result, which is surely well known.

\begin{lemm} \label{lemm:triag-trunc}
Let $\mathcal{C}$ be a non-degenerate $t$-category and
\[ Z[-1] \xrightarrow{\partial} X \to Y \to Z \]
a  triangle.
If $\pi^\mathcal{C}_0(Y) \to \pi^\mathcal{C}_0(Z)$ is an epimorphism in
$\mathcal{C}^\heart$, then there is a unique map $\partial': Z_{\ge 0}[-1] \to
X_{\ge 0}$ such that the following diagram commutes, where all the unlabelled
maps are the canonical ones
\begin{equation*}
\begin{CD}
Z_{\ge 0}[-1] @>{\partial'}>> X_{\ge 0}@>>> Y_{\ge 0} @>>> Z_{\ge 0} \\
@VVV            @VVV @VVV          @VVV    \\
Z[-1]         @>{\partial}>> X @>>> Y         @>>> Z.
\end{CD}
\end{equation*}
Moreover, the top row is also a triangle.
\end{lemm}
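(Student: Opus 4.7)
The plan is to realise $X_{\ge 0}$ as the fiber of the truncated map $Y_{\ge 0} \to Z_{\ge 0}$, so that $\partial'$ drops out of a morphism of triangles. Functoriality of truncation gives a map $g : Y_{\ge 0} \to Z_{\ge 0}$; first I would complete it to a triangle $W \to Y_{\ge 0} \xrightarrow{g} Z_{\ge 0} \to W[1]$. The commutative square expressing that $g$ is compatible with $Y \to Z$ via the canonical truncation maps extends, by the axioms of a triangulated category, to a morphism of triangles, producing a map $W \to X$ and a map $\partial'' : Z_{\ge 0}[-1] \to W$.

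The key step---and the only place where the epimorphism hypothesis enters---is to show $W \in \mathcal{C}_{\ge 0}$. Reading off the long exact sequence of homotopy objects for the triangle defining $W$, one has $\pi^{\mathcal{C}}_i(W) = 0$ automatically for $i \le -2$ since both $Y_{\ge 0}$ and $Z_{\ge 0}$ lie in $\mathcal{C}_{\ge 0}$. For $i = -1$ the relevant segment is
\begin{equation*}
\pi^{\mathcal{C}}_0(Y) \to \pi^{\mathcal{C}}_0(Z) \to \pi^{\mathcal{C}}_{-1}(W) \to 0,
\end{equation*}
and the surjectivity hypothesis forces the right-hand group to vanish.

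Once $W \in \mathcal{C}_{\ge 0}$, the map $W \to X$ factors canonically through $X_{\ge 0}$; comparing the morphism of triangles $W \to Y_{\ge 0} \to Z_{\ge 0}$ with $X \to Y \to Z$ via the five-lemma on homotopy objects in degrees $i \ge 0$---where $Y_{\ge 0} \to Y$ and $Z_{\ge 0} \to Z$ are $\pi^{\mathcal{C}}_i$-isomorphisms---shows that this factorization induces isomorphisms on all $\pi^{\mathcal{C}}_i$, hence is an isomorphism by non-degeneracy. I would then define $\partial'$ by transporting $\partial''$ through the identification $W \wequi X_{\ge 0}$; the top row is a triangle by construction, and commutativity of the square involving $\partial$ and $\partial'$ is the remaining piece of the morphism of triangles.

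Uniqueness of $\partial'$ is a direct Hom-vanishing argument: the difference $\delta$ of two candidates composes to zero with $X_{\ge 0} \to X$, hence factors through the fiber $X_{<0}[-1]$ of that map, but $\Hom_{\mathcal{C}}(Z_{\ge 0}[-1], X_{<0}[-1]) = \Hom_{\mathcal{C}}(Z_{\ge 0}, X_{<0}) = 0$ by orthogonality of $\mathcal{C}_{\ge 0}$ and $\mathcal{C}_{<0}$. The main obstacle is really just bookkeeping between the original and truncated triangles; the epimorphism hypothesis is used once, exactly to ensure $\pi^{\mathcal{C}}_{-1}(W) = 0$, and everything else is a routine application of the $t$-structure axioms.
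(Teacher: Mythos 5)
Your approach is essentially the one the paper takes: build the fiber $W$ of the truncated map $Y_{\ge 0}\to Z_{\ge 0}$, use the epimorphism hypothesis to kill $\pi^{\mathcal C}_{-1}(W)$ and conclude $W\in\mathcal C_{\ge 0}$, factor the TR3-produced map $W\to X$ through $X_{\ge 0}$, use the five lemma plus non-degeneracy to upgrade this to an isomorphism, and then read off existence and the triangle; uniqueness by the same $\Hom$-vanishing argument. So the structure of the argument matches the paper's line by line.

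There is one small but real gap you should close. After identifying $W\simeq X_{\ge 0}$ so that $W\to X$ becomes the canonical truncation map, the map $W\to Y_{\ge 0}$ in your chosen triangle becomes \emph{some} map $X_{\ge 0}\to Y_{\ge 0}$, and it is not automatic that this is the canonical one; ``the top row is a triangle by construction'' only gives you a triangle whose middle map is unidentified. The statement requires the top row to consist of the canonical truncation maps in the two right-hand slots. The fix is short: since $X_{\ge 0}\in\mathcal C_{\ge 0}$, the truncation map $Y_{\ge 0}\to Y$ induces an isomorphism $[X_{\ge 0},Y_{\ge 0}]\xrightarrow{\sim}[X_{\ge 0},Y]$, and the composite $X_{\ge 0}\to Y_{\ge 0}\to Y$ equals $X_{\ge 0}\to X\to Y$ (which is canonical) because your square commutes; hence $X_{\ge 0}\to Y_{\ge 0}$ is forced to be canonical. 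This is exactly the point the paper pauses to verify, and once you add it your proof is complete.
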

\begin{proof}
Let $F$ be a homotopy fibre of $Y_{\ge 0} \to Z_{\ge 0}$. Since
$\pi_0^\mathcal{C} Y \to
\pi_0^\mathcal{C} Z$ is epi we have $\pi_{-1}^\mathcal{C} F = 0$ and $F \in \mathcal{C}_{\ge
0}$. We
shall show that $F \wequi X_{\ge 0}$.

By TR3, there is a commutative diagram
\begin{equation*}
\begin{CD}
Z_{\ge 0}[-1] @>>> F @>{\beta}>> Y_{\ge 0} @>>> Z_{\ge 0} \\
@VVV            @V{\alpha}VV @VVV          @VVV    \\
Z[-1]         @>>> X @>>> Y         @>>> Z.
\end{CD}
\end{equation*}
The composite $F \xrightarrow{\alpha} X \to X_{< 0}$ is zero because $F \in \mathcal{C}_{\ge 0}$
and consequently $\alpha$ factors as $F \xrightarrow{\alpha'} X_{\ge 0} \to X$.
By the five lemma, $\alpha'$ induces isomorphisms on all homotopy objects, so is
an isomorphism by non-degeneracy. This shows that in the above diagram, we may
replace $F$ by $X_{\ge 0}$ in such a way that $\alpha$ becomes the canonical
map. We need to show that then $\beta$ is also the canonical map. But since
$X_{\ge 0} \in \mathcal{C}_{\ge 0}$ we have $[X_{\ge 0}, Y_{\ge 0}] \iso [X_{\ge
0}, Y]$, and the image of $\beta$ in this latter group is the canonical map,
since the diagram commutes. This proves existence.

For uniqueness, note that the triangle
\[ X_{<0}[-1] \to X_{\ge 0} \to X \to X_{<0} \]
induces an exact sequence
\[ 0 = \Hom(Z_{\ge 0}[-1], X_{<0}[-1]) \to
    \Hom(Z_{\ge 0}[-1], X_{\ge 0}) \to \Hom(Z_{\ge 0}[-1], X), \]
whence there is indeed at most one map $\partial'$ making the square commute.
\end{proof}

We now come to the main result.

\begin{thm} \label{thm:main}
The generalized slices of Hermitian $K$-theory are given as follows:
\begin{equation*}
\tilde{s}_n \KO \wequi T^{\wedge n} \wedge
  \begin{cases}
  \tilde{s}_0(\KO) & n \equiv 0 \pmod{4} \\
  H_\mu \ZZ/2      & n \equiv 1 \pmod{4} \\
  H_\mu \ZZ        & n \equiv 2 \pmod{4} \\
  0                & n \equiv 3 \pmod{4}
  \end{cases}
\end{equation*}
Moreover the canonical decomposition \eqref{eq:s0-decomp-1} from Lemma
\ref{lemm:s0-decomp} of $\tilde{s}_0(\KO)$ is given by
\[ H_\mu \ZZ/2[1] \to \tilde{s}_0(\KO) \to \tilde{H}\ZZ, \]
and decomposition \eqref{eq:s0-decomp-2} is given by
\[ \Gm \wedge H_W \ZZ \to \tilde{s}_0 \KO \to H_\mu \ZZ. \]
\end{thm}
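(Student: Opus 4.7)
The plan is to reduce via Bott periodicity to four base cases $\tilde{s}_0 \KO^{[m]}$ for $m \in \{0,1,2,3\}$, and compute each using the two decomposition triangles of Lemma \ref{lemm:s0-decomp}. By Lemma \ref{lemm:fn-T-smash} one has $\tilde{s}_n \KO \wequi T^{\wedge n} \wedge \tilde{s}_0 \KO^{[-n]}$, and Bott periodicity $\KO^{[n+4]} \wequi \KO^{[n]}$ lets us reduce $-n$ modulo $4$. Thus the theorem reduces to: $\tilde{s}_0 \KO^{[0]}$ has the advertised form (given by the two triangles), $\tilde{s}_0 \KO^{[1]} \wequi 0$, $\tilde{s}_0 \KO^{[2]} \wequi H_\mu \ZZ$, and $\tilde{s}_0 \KO^{[3]} \wequi H_\mu \ZZ/2$.

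The case $m = 2$ is essentially contained in Lemma \ref{lemm:s0-KQH}: it supplies $s_0(\KO^{[2]}_{\ge 0}) \wequi H_\mu \ZZ \wequi f_0 \ul{\pi}_0(\KO^{[2]})_*$, and combined with $f_1 H_\mu \ZZ = 0$ (which follows from $\ul{K}_{-1}^M = 0$ via Lemma \ref{lemm:fn-shifted}), triangle \eqref{eq:s0-decomp-2} gives $\tilde{s}_0 \KO^{[2]} \wequi H_\mu \ZZ$. For $m = 1$, the low-degree Hermitian $K$-group table yields $\ul{\pi}_0(\KO^{[1]})_0 = 0$, hence $f_0 \ul{\pi}_0(\KO^{[1]})_* = 0$, so triangle \eqref{eq:s0-decomp-1} reduces $\tilde{s}_0 \KO^{[1]}$ to $s_0(\KO^{[1]}_{\ge 1})$; one shows this vanishes by an argument parallel to Lemma \ref{lemm:s0-KQH}, applying the conservative slice-preserving functor $\Omega^\infty_s$ and invoking Voevodsky's Proposition 4.4 together with a geometric (quaternionic Grassmannian) model for $GW^{[1]}$. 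The case $m = 3$ is analogous: $\ul{\pi}_0 GW^{[3]} = \ZZ/2$, so one identifies both $f_0 \ul{\pi}_0(\KO^{[3]})_*$ and $s_0(\KO^{[3]}_{\ge 0})$ with $H_\mu \ZZ/2$ via the same Voevodsky-Panin-Walter strategy, and triangle \eqref{eq:s0-decomp-2} together with $f_1 H_\mu \ZZ/2 = 0$ yields $\tilde{s}_0 \KO^{[3]} \wequi H_\mu \ZZ/2$.

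For $m = 0$, the remaining task is to produce the two advertised triangles for $\tilde{s}_0 \KO$. From triangle \eqref{eq:s0-decomp-1}, I would identify $f_0 \ul{\pi}_0(\KO)_* \wequi \tilde{H}\ZZ$ using the unit map $S \to \KO$: via Morel's theorem $\ul{\pi}_0(S)_* \wequi \ul{K}_*^{MW}$ this induces $\tilde{H}\ZZ = f_0 \ul{K}_*^{MW} \to f_0 \ul{\pi}_0(\KO)_*$, shown to be an isomorphism by checking that both sides agree with $\ul{GW}$ in weight zero and invoking the conservativity of Proposition \ref{prop:more-iheart}(3). The remaining piece $s_0(\KO_{\ge 1}) \wequi H_\mu \ZZ/2[1]$ is identified by a $T$-shift from the $m=3$ computation together with Bott periodicity. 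For triangle \eqref{eq:s0-decomp-2}, analogous identifications $f_1 \ul{\pi}_0(\KO)_* \wequi \Gm \wedge H_W \ZZ$ (using $\ul{K}_*^{MW}/h = \ul{K}_*^W$ and Lemma \ref{lemm:htpy-mods-eff}) and $s_0 \KO_{\ge 0} \wequi H_\mu \ZZ$ (again via the shifted geometric argument) are required. The main technical obstacle is the identification of the birational-slice terms $s_0(\KO^{[m]}_{\ge k})$ for $m=1,3$: these rest on the cellular structure of the quaternionic Grassmannians of Panin-Walter combined with Voevodsky's effectivity criterion, generalising the computation of Lemma \ref{lemm:s0-KQH} to the other Bott-periodicity twists of $\KO$.
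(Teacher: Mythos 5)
Your proposal correctly identifies the reduction via Bott periodicity and Lemma~\ref{lemm:fn-T-smash} to the four cases $\tilde{s}_0\KO^{[m]}$, $m\in\{0,1,2,3\}$, and the $m=2$ case is handled correctly via Lemma~\ref{lemm:s0-KQH}. However, there is a genuine gap in how you propose to handle $m\in\{0,1,3\}$.

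You propose to compute $s_0(\KO^{[1]}_{\ge 0})$, $s_0(\KO^{[3]}_{\ge 0})$, and $s_0(\KO^{[0]}_{\ge 0})$ by ``an argument parallel to Lemma~\ref{lemm:s0-KQH}, \ldots invoking Voevodsky's Proposition 4.4 together with a geometric (quaternionic Grassmannian) model for $GW^{[1]}$''. This does not work: the Panin-Walter cellular decomposition (\cite[Theorems 4.1(a), 5.1]{panin2010quaternionic}) is a specific geometric result about the quaternionic Grassmannians $HGr(m,n)$, which model \emph{only} $GW^{[2]}$. The spaces $GW^{[1]}$, $GW^{[3]}$, and $GW^{[0]}$ do have geometric models (Schlichting--Tripathi), but these are different spaces (orthogonal and isotropic Grassmannians) and no analogue of the Panin--Walter decomposition is available for them. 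So the parallel argument you sketch is not in fact available; the $m=2$ case is the \emph{only} one amenable to direct geometric computation. There is also a smaller but concrete error in your $m=0$ step: the identification $s_0(\KO_{\ge 1})\wequi H_\mu\ZZ/2[1]$ ``by a $T$-shift from the $m=3$ computation'' does not follow, since $T$-smashing gives $s_0(T\wedge \KO^{[3]}_{\ge 0}) \wequi T\wedge s_{-1}(\KO^{[3]}_{\ge 0})$ by Lemma~\ref{lemm:fn-T-smash}, which is not the same as $s_0(\KO^{[3]}_{\ge 0})[1]$.

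The missing idea in the paper's proof is the Wood/Karoubi cofiber sequence $\Gm\wedge\KO\xrightarrow{\eta}\KO\to KGL\to\KO^{[1]}$ (triangle~\eqref{eq:wood-triangle}, citing \cite{rondigs2013slices}). Smashing this with $T^{\wedge j}$ for $j=0,1,2,3$, applying $f_0$, and using the technical Lemma~\ref{lemm:triag-trunc} on truncating triangles, one bootstraps all the $s_0(\KO^{[m]}_{\ge k})$ from the already-known $s_0\KO^{[2]}_{\ge 0}$ (Panin--Walter) and the classical slice computation for $KGL$. Your argument will not close without this or a comparable reduction, because direct geometric access to the effectivity of the $m\ne 2$ Grothendieck--Witt spaces is not currently available.
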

\begin{proof}
Since $\KO \wedge T^{\wedge 4} \wequi \KO$, the periodicity is clear, and we
need only deal with $n \in \{0,1,2,3\}$. It follows from Lemma \ref{lemm:fn-T-smash}
that $\tilde{s}_n \KO = T^{\wedge n} \wedge \tilde{s}_0(T^{\wedge -n} \wedge
\KO) = T^{\wedge n} \wedge \tilde{s}_0
\KO^{[-n]}$, and similarly for $s_n$.

We first deal with $n \in \{1,2,3\}$. Since then $\ul{\pi}_0(\KO^{[-n]})_{-1} =
0$ (see again Table \ref{tab:ko-groups}) we have $f_1 \ul{\pi}_0(\KO^{[-n]})_* =
0$ and hence by decomposition \eqref{eq:s0-decomp-2} from Lemma
\ref{lemm:s0-decomp} it is enough to show: $s_0(\KO^{[-1]}_{\ge 0}) = H_\mu
\ZZ/2, s_0(\KO^{[-2]}_{\ge 0}) = H_\mu
\ZZ$ and $s_0(\KO^{[-3]}_{\ge 0}) = 0$. The case $n=2$ is Lemma
\ref{lemm:s0-KQH}.

We shall now use the  triangle \cite[Theorem 4.4]{rondigs2013slices}
\begin{equation} \label{eq:wood-triangle}
  \Gm \wedge \KO \xrightarrow{\eta} \KO \xrightarrow{f} KGL
   \xrightarrow{h} \KO \wedge T = \KO^{[1]}.
\end{equation}

Smashing with $T^{\wedge 2}$ and applying $f_0$ we get (using that $T \wedge KGL \wequi KGL$)
\[ f_0 \KO^{[2]} \xrightarrow{f} f_0 KGL \xrightarrow{h} f_0 \KO^{[3]}. \]
I claim that $h: \ul{\pi}_0^\eff f_0KGL \to \ul{\pi}_0^\eff f_0\KO^{[3]} \in
\SH(k)^{\eff,\heart}$ is epi. By Proposition \ref{prop:more-iheart}(3), it
suffices to show that the induced map of Nisnevich sheaves
$h: \ZZ = \ul{\pi}_0(KGL)_0 \to \ul{\pi}_0(\KO^{[3]})_0 = \ZZ/2$ is an epimorphism.
All
symplectic forms have even rank, so the
map $f: \ul{\pi}_0(\KO^{[2]})_0 \to \ul{\pi}_0(KGL)_0 = \ZZ$ has image
$2\ZZ$ and thus non-zero cokernel. This implies the claim.
We may thus
apply Lemma \ref{lemm:triag-trunc} to $\mathcal{C} = \SH(k)^\eff$ and this
triangle. Consequently there is a triangle
\[ s_0 \KO^{[2]}_{\ge 0} \to s_0 KGL_{\ge 0} \to s_0 \KO^{[3]}_{\ge 0}. \]
Note that $f_0 KGL \in \SH(k)^{\eff}_{\ge 0}$ since $K$-theory of smooth schemes is
connective. Thus the triangle is isomorphic to
\[ H_\mu \ZZ \xrightarrow{2} H_\mu \ZZ \to s_0 \KO^{[3]}_{\ge 0} \]
yielding the required computation $s_0 \KO^{[3]}_{\ge 0} \wequi H_\mu \ZZ/2$.

Throughout the proof we will keep using Proposition \ref{prop:more-iheart}(3)
all the time. To simplify notation we will no longer talk about
$\ul{\pi}_i^\eff$ but only about $\ul{\pi}_i(\bullet)_0$; any statement about
the latter should be understood to correspond to a statement about the former.

By a similar argument, smashing with $T$ instead of $T^{\wedge 2}$, and
using that $\ZZ = \ul{\pi}_0(KGL)_0 \xrightarrow{h} \ul{\pi}_0(\KO^{[2]})_0 = \ZZ$
is an isomorphism, we conclude from 
\[ f_0 \KO^{[1]} \to f_0 KGL \to f_0 \KO^{[2]} \]
that $s_0 \KO^{[1]}_{\ge 0} = 0$.

We have thus handled the cases $n \in \{1,2,3\}$. Consider the triangle
\[ f_0 \KO^{[3]} \to f_0 KGL \to f_0 \KO^{[4]}, \]
obtained by smashing triangle \eqref{eq:wood-triangle} with $T^{\wedge 3}$ and
applying $f_0$.
We have $\ul{\pi}_0(\KO^{[3]})_0 = \ZZ/2$ and $\ul{\pi}_0(KGL)_0 = \ZZ$, whence
$\ul{\pi}_0(\KO^{[3]})_0 \to \ul{\pi}_0(KGL)_0$ must be the zero map and consequently
$\ul{\pi}_1(\KO^{[4]})_0 \to \ul{\pi}_0(\KO^{[3]})_0$ must be epi. It follows that we may
apply Lemma \ref{lemm:triag-trunc} to the rotated triangle
\[ f_0 KGL[-1] \to f_0\KO^{[4]}[-1] \to f_0 \KO^{[3]}. \]
Now $(E[-1])_{\ge 0} \wequi E_{\ge 1}[-1]$ and so, rotating back, we get a
triangle
\[s_0 \KO^{[3]}_{\ge 0} \to  s_0KGL_{\ge 1}  \to s_0\KO^{[4]}_{\ge 1}. \]
I claim that $s_0 KGL_{\ge 1} = 0$. Indeed we have a  triangle
\[ s_0 KGL_{\ge 1} \to s_0 KGL_{\ge 0} \to s_0 \ul{\pi}_0(KGL)_*, \]
and the two terms on the right are isomorphic by what we have already said.

We thus conclude that $s_0 \KO^{[0]}_{\ge 1} \wequi s_0 \KO^{[4]}_{\ge 1} \wequi s_0
\KO^{[3]}_{\ge 0}[1] \wequi H_\mu
\ZZ/2[1]$. The unit map $S \to \KO$ induces an isomorphism $\tilde{H}\ZZ = f_0
\ul{\pi}_0(S)_* \to f_0 \ul{\pi}_0(\KO)_*$, and hence the decomposition \eqref{eq:s0-decomp-1}
of $\tilde{s}_0 \KO$ follows.

It remains to establish the second decomposition. We first
show that $s_0(\KO_{\ge 0}) = H_\mu \ZZ$. For this we consider the
triangle
\[ f_0 \KO^{[0]} \to f_0 KGL \to f_0 \KO^{[1]} \]
obtained by applying $f_0$ to triangle \eqref{eq:wood-triangle}.
Since $\ul{\pi}_0(\KO^{[1]})_0 = 0$, by Lemma \ref{lemm:triag-trunc} we get a
triangle
\[ s_0 \KO^{[0]}_{\ge 0} \to s_0 KGL_{\ge 0} \to s_0 \KO^{[1]}_{\ge 0} \]
and we have already seen that $s_0 \KO^{[1]}_{\ge 0} = 0$ and $s_0 KGL_{\ge 0} =
H_\mu \ZZ$. Thus $s_0(\KO_{\ge 0}) = H_\mu \ZZ$ as claimed.

Finally the map $\ul{K}^W_* \wedge \Gm \xrightarrow{\eta} \ul{K}^{MW}_* \to
\ul{\pi}_0(\KO)_*$
(which makes sense since $\eta h = 0$ and so $\eta: \ul{K}_{*+1}^{MW} \to
\ul{K}_*^{MW}$ factors through $\ul{K}_{*+1}^{MW}/h = \ul{K}_{*+1}^W$; see also
\cite{morel2004puissances})
induces an isomorphism on
$\ul{\pi}_*(\bullet)_{-1}$ and consequently
\[ \Gm \wedge H_W \ZZ \wequi f_1(\ul{K}^W_* \wedge \Gm) \wequi f_1 \ul{K}^{MW}_*
\wequi f_1 \ul{\pi}_0(\KO)_*, \]
where the first equivalence is by Lemma \ref{lemm:fn-T-smash}. This concludes
the proof.
\end{proof}

\section{The Homotopy Sheaves of $\tilde{H}\ZZ$ and $H_W \ZZ$}
\label{sec:morel-conjecture}

In this section we prove the following result.

\begin{thm}[(Morel's Structure Conjecture\footnote{Morel conjectured a form of
this result in personal communication.})] \label{thm:morel-conjecture}
Let $k$ be a perfect field of characteristic different from two.

The natural maps $\tilde{H} \ZZ = f_0 \ul{K}^{MW}_* \to \ul{K}^{MW}_*$ and $H_W \ZZ
= f_0 \ul{K}^{W}_* \to \ul{K}^{W}_*$ induce isomorphisms on
$\ul{\pi}_0(\bullet)_*$. Moreover the natural maps $\tilde{H} \ZZ \to H_\mu \ZZ$
and $H_W \ZZ \to H_\mu \ZZ/2$ (obtained by applying $f_0$ to
$\ul{K}^{MW}_* \to \ul{K}^{MW}_*/\eta \wequi \ul{K}^M_*$ and
$\ul{K}^W_* \to \ul{K}^W_*/\eta \wequi \ul{K}^M_*/2$, respectively)
induce isomorphisms on $\ul{\pi}_i(\bullet)_*$ for $i \ne 0$.
\end{thm}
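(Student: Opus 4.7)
The strategy, as signposted in the introduction, is to split into a formal regime ($i \le 0$ or $j \le 0$) handled using the results of Section \ref{sec:effective-spectra}, and a main regime ($i > 0$, $j > 0$) handled by playing the two triangles of Theorem \ref{thm:main} against each other. In the formal regime, the $i = 0$ case is the assertion that the counits $\tilde{H}\ZZ = f_0 \ul{K}^{MW}_* \to \ul{K}^{MW}_*$ and $H_W \ZZ = f_0 \ul{K}^W_* \to \ul{K}^W_*$ induce isomorphisms on $\ul{\pi}_0(\bullet)_*$. Since Lemma \ref{lemm:htpy-mods-eff} shows that both $\ul{K}^{MW}_*$ and $\ul{K}^W_*$ are effective homotopy modules, this is immediate from Proposition \ref{prop:more-iheart}(1): the fibre of each counit lies in $\SH(k)_{\ge 1}$. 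For $i < 0$ all four spectra lie in $\SH(k)_{\ge 0}$, so both sides of the claimed identifications vanish. For $j \le 0$ I would exploit the slice triangle $\Gm \wedge H_W \ZZ \to \tilde{H}\ZZ \to H_\mu \ZZ$ for $\ul{K}^{MW}_*$ (arising from $f_1 \ul{K}^{MW}_* \wequi \Gm \wedge H_W \ZZ$, established in the proof of Theorem \ref{thm:main}, together with $s_0 \ul{K}^{MW}_* \wequi H_\mu \ZZ$) and its analogue for $\ul{K}^W_*$, then induct downward on $j$ using the standard vanishing of motivic cohomology in negative weight.

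For the main regime $i, j > 0$, I would combine the two triangles from Theorem \ref{thm:main},
\[ H_\mu \ZZ/2[1] \to \tilde{s}_0 \KO \to \tilde{H}\ZZ, \qquad \Gm \wedge H_W \ZZ \to \tilde{s}_0 \KO \to H_\mu \ZZ, \]
yielding long exact sequences
\[ \cdots \to \ul{\pi}_{i-1}(H_\mu \ZZ/2)_j \to \ul{\pi}_i(\tilde{s}_0 \KO)_j \to \ul{\pi}_i(\tilde{H}\ZZ)_j \to \ul{\pi}_{i-2}(H_\mu \ZZ/2)_j \to \cdots \]
and
\[ \cdots \to \ul{\pi}_i(H_W \ZZ)_{j-1} \to \ul{\pi}_i(\tilde{s}_0 \KO)_j \to \ul{\pi}_i(H_\mu \ZZ)_j \to \ul{\pi}_{i-1}(H_W \ZZ)_{j-1} \to \cdots \]
in which the unknowns are the positive-weight homotopy sheaves of $\tilde{H}\ZZ$, $H_W \ZZ$ and $\tilde{s}_0 \KO$. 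The third of these can be pinned down independently via $\tilde{f}_n \KO \wequi f_n \KO_{\ge n}$ (Lemma \ref{lemm:fn-trunc}) and the well-understood $\ul{\pi}_*(\KO)_*$ coming from hermitian $K$-theory; the weight-zero case is already made explicit in the introduction. Substituting the resulting computation of $\ul{\pi}_*(\tilde{s}_0 \KO)_*$ into the two long exact sequences, and matching against the known sheaves $\ul{\pi}_*(H_\mu \ZZ)_*$ and $\ul{\pi}_*(H_\mu \ZZ/2)_*$, should yield the desired isomorphisms for $i \ne 0$. A double induction on $(i, j)$ may be required to break the apparent circularity between the two triangles.

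The principal difficulty will be controlling the connecting homomorphisms in these long exact sequences once positive weights and the motivic operations $\eta$, $h$, and the mod-$2$ Bockstein come into play. This is what forces one to invoke the resolution of the Milnor conjectures, so that $\ul{\pi}_*(H_\mu \ZZ/2)_*$ is identified with mod-$2$ Galois cohomology, together with Voevodsky's computation of the motivic Steenrod algebra, so as to pin down the relevant maps between the integral and mod-$2$ sides. With these inputs in hand, the two long exact sequences, anchored by the independent computation of $\ul{\pi}_*(\tilde{s}_0 \KO)_*$, will force $\ul{\pi}_i(\tilde{H}\ZZ)_* \iso \ul{\pi}_i(H_\mu \ZZ)_*$ and $\ul{\pi}_i(H_W\ZZ)_* \iso \ul{\pi}_i(H_\mu \ZZ/2)_*$ for $i \ne 0$, completing the proof.
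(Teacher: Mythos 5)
Your handling of the formal regime is essentially correct: the $i=0$ case follows from Lemma~\ref{lemm:htpy-mods-eff} and Proposition~\ref{prop:more-iheart}, and the $i<0$ case from connectivity. (One small caution: the slice triangle for $\tilde{H}\ZZ$ is $\Gm \wedge H_W\ZZ \to \tilde{H}\ZZ \to s_0\tilde{H}\ZZ$, and $s_0\tilde{H}\ZZ$ is $H_\mu\ZZ \oplus H_\mu\ZZ/2[2]$, not $H_\mu\ZZ$ — see Proposition~\ref{prop:slices-HW}(3). You are conflating the slice triangle of $\tilde{H}\ZZ$ with triangle~\eqref{eq:s0-decomp-2} for $\KO$, which has the same left- and middle-hand terms after passing to effective covers but a different right-hand term.)

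The main regime is where the gap sits. Your plan hinges on pinning down $\ul{\pi}_i(\tilde{s}_0\KO)_j$ for $j>0$ ``independently'' from Lemma~\ref{lemm:fn-trunc} and known Hermitian $K$-groups, and then propagating this through the two long exact sequences. This step does not go through. The functors $f_0, f_1$ appearing in $\tilde{f}_n \KO \wequi f_n \KO_{\ge n}$ are slice covers, not homotopy truncations; computing $\ul{\pi}_i(f_0 X)_j$ for $j>0$ is not a matter of reading off $\ul{\pi}_*(X)_*$ — this is precisely the kind of computation that is as hard as understanding motivic cohomology itself (compare $f_0 \ul{K}^M_* = H_\mu\ZZ$, whose positive-weight homotopy is higher Chow groups). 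In weight zero the computation is easy, thanks to the $t$-exactness of $r$ (Proposition~\ref{prop:effective-t-structure}(3)), which is why the introduction can state it; but that does not extend to positive weight. Without an independent input, the two long exact sequences relate three unknowns circularly.

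What the paper does instead is break the circularity with an internal input: (i) the $\eta$-cofibre of $H_W\ZZ$ splits, $H_W\ZZ/\eta \wequi H_\mu\ZZ/2 \oplus H_\mu\ZZ/2[2]$, and the composite $H_\mu\ZZ/2[2] \to H_W\ZZ\wedge\Gm[1] \to H_\mu\ZZ/2\wedge\Gm[1]$ is identified with $\tau$ via the motivic Steenrod algebra computation (Lemma~\ref{lemm:HW-mod-eta}); (ii) one then inducts on weight along the self-triangle $H_W\ZZ\wedge\Gm \to H_W\ZZ \to H_W\ZZ/\eta$, using that $\tau$ is an isomorphism on positive stems by the Milnor conjectures (Lemma~\ref{lemm:HW-pi-ge1}); (iii) the result for $\tilde{H}\ZZ$ is then deduced from the one for $H_W\ZZ$ using Morel's exact sequence $0 \to \ul{K}^{MW}_* \to \ul{K}^M_*\oplus\ul{K}^W_* \to \ul{K}^M_*/2 \to 0$ and the split-triangle trick of Lemma~\ref{lemm:triang-split-trick}. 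You correctly sense that the Milnor conjectures and the Steenrod algebra must be invoked, but without the $\eta$-cofibre splitting, the $\tau$-identification, and Morel's exact sequence, the plan as written does not assemble into a proof.
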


The proof will proceed through a series of lemmas. Note that $H_W\ZZ,
\tilde{H}\ZZ \in \SH(k)_{\ge 0}$ by Proposition
\ref{prop:effective-t-structure}(3), so in the theorem only $i \ge 0$ is
interesting.

Throughout this subsection, we fix the perfect field $k$ of characteristic not
two. Actually the only place where we explicitly use the assumption on the characteristic is
in Lemma \ref{lemm:HW-mod-eta} below.

\begin{lemm} \label{lemm:pi0-HW}
We have $\ul{\pi}_0(\tilde{H} \ZZ)_* = \ul{K}_*^{MW}$ and $\ul{\pi}_0(H_W \ZZ)_*
= \ul{K}^W_*$.
\end{lemm}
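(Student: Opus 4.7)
The plan is to chain together results already established: effectivity of the homotopy modules in question (Lemma \ref{lemm:htpy-mods-eff}), the fact that $ri^\heart \iso \id$ (from the proof of Proposition \ref{prop:more-iheart}(2)), and the $\ul{\pi}_0$-formula from Proposition \ref{prop:more-iheart}(1). Both $\ul{K}^{MW}_*$ and $\ul{K}^W_*$ are effective homotopy modules, so each lifts to an object of $\SH(k)^{\eff,\heart}$, and this lifting is essentially what $f_0$ computes.

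In more detail, by Lemma \ref{lemm:htpy-mods-eff} there exist $M_{MW}, M_W \in \SH(k)^{\eff,\heart}$ with $i^\heart M_{MW} \iso \ul{K}^{MW}_*$ and $i^\heart M_W \iso \ul{K}^W_*$. From the proof of Proposition \ref{prop:more-iheart}(2), the composite $r \circ i^\heart$ is naturally isomorphic to the identity on $\SH(k)^{\eff,\heart}$. Therefore
\[ \tilde{H}\ZZ = f_0 \ul{K}^{MW}_* = i r i^\heart M_{MW} \iso i M_{MW}, \]
and similarly $H_W \ZZ \iso i M_W$. I would then apply Proposition \ref{prop:more-iheart}(1) to the (already heart-level, and in particular non-negative) objects $M_{MW}, M_W \in \SH(k)^\eff_{\ge 0}$ to obtain
\[ \ul{\pi}_0(\tilde{H}\ZZ)_* \iso \ul{\pi}_0(i M_{MW})_* \iso i^\heart \ul{\pi}_0^\eff(M_{MW}) \iso i^\heart M_{MW} \iso \ul{K}^{MW}_*, \]
and analogously for $\ul{K}^W_*$.

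The only thing left to verify is that, under these identifications, the isomorphism matches the natural map $\tilde{H}\ZZ \to \ul{K}^{MW}_*$ stated in the theorem (i.e.\ the counit $ir \Rightarrow \id$, evaluated on $\ul{K}^{MW}_*$). This is a diagram chase: the counit $iri^\heart M_{MW} \to i^\heart M_{MW}$ is obtained by applying $i$ to the unit-for-$r i^\heart \iso \id$, so on $\ul{\pi}_0(\bullet)_*$ it pulls back to the identity under the chain of isomorphisms above. There is no real obstacle here; the whole statement is a formal consequence of the results of Section \ref{sec:effective-spectra} once one knows $\ul{K}^{MW}_*$ and $\ul{K}^W_*$ are effective, which is exactly Lemma \ref{lemm:htpy-mods-eff}.
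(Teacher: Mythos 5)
Your proposal is correct and uses exactly the same chain of results as the paper's proof: effectivity of $\ul{K}^{MW}_*$ and $\ul{K}^W_*$ (Lemma \ref{lemm:htpy-mods-eff}), the identity $ri^\heart \iso \id$ from Proposition \ref{prop:more-iheart}(2), and the $\ul{\pi}_0$-formula of Proposition \ref{prop:more-iheart}(1). The paper writes the same computation as $\ul{\pi}_0(f_0\ul{K}^{MW}_*)_* = \ul{\pi}_0(ir\ul{K}^{MW}_*)_* \iso i^\heart r \ul{K}^{MW}_* \iso \ul{K}^{MW}_*$, applying (1) before simplifying $r\ul{K}^{MW}_*$, whereas you simplify $ri^\heart M_{MW} \iso M_{MW}$ first and then apply (1); these are just re-orderings of the same argument.
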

\begin{proof}
This follows from the results of the second half of Section
\ref{sec:effective-spectra}.
Namely the homotopy modules $\ul{K}^{MW}_*$ and
$\ul{K}^{W}_*$ are effective (Lemma \ref{lemm:htpy-mods-eff}), so
\[ \ul{\pi}_0(f_0\ul{K}^{MW}_*)_* = \ul{\pi}_0(ir\ul{K}^{MW}_*)_*
    \iso i^\heart r \ul{K}^{MW}_* \iso \ul{K}^{MW}_*, \]
(where the first equality is by definition, the second is by Proposition
\ref{prop:more-iheart} part (1), and the third is by part (2) of that
proposition and effectivity of $\ul{K}^{MW}_*$)
and similarly for $\ul{K}^W_*$.
\end{proof}

\begin{lemm} \label{lemm:HW-wt-one}
We have 
\begin{equation*}
\ul{\pi}_i(\Gm \wedge H_W \ZZ)_0 =
  \begin{cases}
    \ul{K}_1^{W} &i=0 \\
    \ZZ/2 &i=1 \\
    0 & \text{else}
  \end{cases}
\end{equation*}
The canonical map $H_W \ZZ \to H_\mu \ZZ/2$ induces an isomorphism on
$\ul{\pi}_1(\bullet)_1$.
\end{lemm}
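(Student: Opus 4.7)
Plan: The approach is to play the two distinguished triangles of Theorem \ref{thm:main} off against each other, as suggested in the introduction.

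First, I would compute $\ul{\pi}_i(\tilde{s}_0 \KO)_0$ using the first triangle $H_\mu \ZZ/2[1] \to \tilde{s}_0 \KO \to \tilde{H}\ZZ$. Since $\tilde{H}\ZZ = f_0 \ul{K}^{MW}_*$ is the effective cover of an object in the heart, the argument of Lemma \ref{lemm:pi0-HW} gives $\ul{\pi}_i(\tilde{H}\ZZ)_0 = \ul{K}_0^{MW} \cdot \delta_{i,0} = \ul{GW} \cdot \delta_{i,0}$, while standard motivic cohomology yields $\ul{\pi}_i(H_\mu \ZZ/2)_0 = \ZZ/2 \cdot \delta_{i,0}$. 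The long exact sequence in weight $0$ then gives $\ul{\pi}_0(\tilde{s}_0\KO)_0 = \ul{GW}$, $\ul{\pi}_1(\tilde{s}_0 \KO)_0 = \ZZ/2$, and $0$ in other degrees.

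Next, I would feed this into the long exact sequence of the second triangle $\Gm \wedge H_W \ZZ \to \tilde{s}_0 \KO \to H_\mu \ZZ$ in weight $0$, using $\ul{\pi}_i(H_\mu \ZZ)_0 = \ZZ \cdot \delta_{i,0}$. Since $\ul{\pi}_i(\Gm \wedge H_W \ZZ)_0 = \ul{\pi}_i(H_W \ZZ)_1$, the sheaves in the first assertion of the lemma can be read off immediately: for $i \ge 2$ the sequence forces $0$; for $i = 1$ it reduces to $0 \to \ul{\pi}_1(\Gm \wedge H_W \ZZ)_0 \to \ZZ/2 \to 0$, yielding $\ZZ/2$; and for $i = 0$ it identifies $\ul{\pi}_0(\Gm \wedge H_W \ZZ)_0$ with $\ker(\ul{GW} \to \ZZ) \cong \ul{K}_1^W$, matching Lemma \ref{lemm:pi0-HW}.

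For the isomorphism on $\ul{\pi}_1(\cdot)_1$, both source and target are $\ZZ/2$, so it suffices to establish non-vanishing. I would consider the fiber sequence $F \to H_W \ZZ \to H_\mu \ZZ/2$ in $\SH(k)$ and take its long exact sequence at weight $1$. By Lemma \ref{lemm:pi0-HW} and the first Milnor conjecture, the map $\ul{\pi}_0(H_W \ZZ)_1 \to \ul{\pi}_0(H_\mu \ZZ/2)_1$ is identified with the canonical surjection $\ul{K}_1^W \cong \ul{I} \twoheadrightarrow \ul{I}/\ul{I}^2 \cong \ul{K}_1^M/2$, so the image of $\ul{\pi}_0(F)_1$ in $\ul{\pi}_0(H_W \ZZ)_1 = \ul{K}_1^W$ is $\ul{I}^2$. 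If this map is actually injective (i.e., no $\ZZ/2$-extension), then the connecting map $\ul{\pi}_1(H_\mu \ZZ/2)_1 \to \ul{\pi}_0(F)_1$ vanishes and the desired map on $\ul{\pi}_1(\cdot)_1$ is surjective, hence an isomorphism. The main obstacle is precisely ruling out this extension; my plan is to exploit that $\eta$ acts trivially on $H_\mu \ZZ/2$ so that the map factors as $H_W \ZZ \to H_W \ZZ/\eta \to H_\mu \ZZ/2$, and then to use an octahedron with the cofibre sequence $\Gm \wedge H_W \ZZ \xrightarrow{\eta} H_W \ZZ \to H_W \ZZ/\eta$ to identify $F$ via a canonical lift $\tilde{\eta}: \Gm \wedge H_W \ZZ \to F$; the injectivity of $\ul{\pi}_0(F)_1 \to \ul{K}_1^W$ should then follow formally.
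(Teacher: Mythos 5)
Your argument for the first assertion matches the paper's: you compute $\ul{\pi}_i(\tilde{s}_0\KO)_0$ from the first triangle of Theorem \ref{thm:main} and then read off $\ul{\pi}_i(\Gm\wedge H_W\ZZ)_0$ from the long exact sequence of the second triangle, using Lemma \ref{lemm:pi0-HW} in degree $0$. That part is fine.

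For the second assertion your route is genuinely different, and there is a real gap. You reduce the claim to the injectivity of $\ul{\pi}_0(F)_1 \to \ul{K}_1^W$, where $F$ is the fibre of $c : H_W\ZZ \to H_\mu\ZZ/2$. But note that this injectivity is \emph{equivalent} to the statement you are trying to prove: the connecting map $\ZZ/2 = \ul{\pi}_1(H_\mu\ZZ/2)_1 \to \ul{\pi}_0(F)_1$ vanishes iff $\ul{\pi}_1(c)_1 : \ZZ/2 \to \ZZ/2$ is surjective, which (both groups being $\ZZ/2$) holds iff it is an isomorphism. Your octahedron produces a triangle $\Gm\wedge H_W\ZZ \xrightarrow{\tilde\eta} F \to G$ with $G$ the fibre of $H_W\ZZ/\eta \to H_\mu\ZZ/2$, and on $\ul{\pi}_0(\cdot)_1$ the lift $\tilde\eta$ gives you only that $\ul{I}^2$ injects into $\ul{\pi}_0(F)_1$. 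To conclude you would need $\ul{\pi}_0(G)_1 = 0$, but you give no argument for this; ``should then follow formally'' does not. The only readily available identification of $G$ is via $H_W\ZZ/\eta \wequi H_\mu\ZZ/2 \oplus H_\mu\ZZ/2[2]$, which is Lemma \ref{lemm:HW-mod-eta} --- proved \emph{after} and \emph{using} the lemma at hand, so invoking it here would be circular.

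The paper sidesteps this by a different comparison. After applying $f_0$ (equivalently $r^\heart$), the two short exact sequences of effective homotopy modules
$0 \to \ul{K}^M_* \xrightarrow{h} \ul{K}^{MW}_* \to \ul{K}^W_* \to 0$ and
$0 \to \ul{K}^M_* \xrightarrow{2} \ul{K}^M_* \to \ul{K}^M_*/2 \to 0$
yield a morphism from the triangle $H_\mu\ZZ \xrightarrow{h} \tilde{H}\ZZ \to H_W\ZZ$ to $H_\mu\ZZ \xrightarrow{2} H_\mu\ZZ \to H_\mu\ZZ/2$ whose leftmost vertical map is the identity. Comparing the boundary maps of the two long exact sequences in weight $1$, one finds $\delta = \delta'\circ c$ where $\delta : \ul{\pi}_1(H_W\ZZ)_1 \to \ul{K}_1^M$ has image $\ker(h : \ul{K}_1^M \to \ul{K}_1^{MW}) = \mu_2 \ne 0$ and $\delta'$ is injective; so $c\ne 0$ on $\ul{\pi}_1(\cdot)_1$ and hence is an isomorphism. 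The key computational input is thus the purely algebraic fact $\ker(h : \ul{K}_1^M \to \ul{K}_1^{MW}) = \mu_2$, which you do not use and which is what makes the argument non-circular.
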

\begin{proof}
By Theorem \ref{thm:main}, triangle \eqref{eq:s0-decomp-2} from Lemma \ref{lemm:s0-decomp} for $\tilde{s}_0
\KO$ reads
\[ \Gm \wedge H_W \ZZ \to \tilde{s}_0 \KO \to H_\mu \ZZ. \]
We know the $\ul{\pi}_i(\tilde{s}_0 \KO)_0$
from Theorem \ref{thm:main} (use triangle \eqref{eq:s0-decomp-1} from Lemma
\ref{lemm:s0-decomp} for $\tilde{s}_0 \KO$), and we know $\ul{\pi}_i(H_\mu \ZZ)_0$ from the
definition. We also know $\ul{\pi}_0(\Gm \wedge H_W \ZZ)_0$ from Lemma
\ref{lemm:pi0-HW}, and $\ul{\pi}_i(\Gm \wedge H_W \ZZ)_0 = 0$ for $i < 0$ by Proposition
\ref{prop:effective-t-structure}(3).
The claim
about the remaining $\ul{\pi}_i(H_W \ZZ)_0$ follows from the long exact sequence
of the triangle.

To prove the last claim, consider the diagram of homotopy modules
\begin{equation*}
\begin{CD}
\ul{K}^M_* @>h>> \ul{K}^{MW}_* @>>> \ul{K}^W_* \\
@|               @VVV           @VVV    \\
\ul{K}^M_* @>2>> \ul{K}^M_* @>>> \ul{K}^M_*/2.
\end{CD}
\end{equation*}
Applying $f_0$, both sequences become exact as sequences in
the abelian category $\SH(k)^{\eff,\heart}$. Equivalently, by Proposition
\ref{prop:more-iheart}(3), the sequences of sheaves
$\ZZ \to \ul{GW} \to \ul{W}$ and $\ZZ \to \ZZ \to \ZZ/2$ are exact. Thus we get
a morphism of triangles
\begin{equation*}
\begin{CD}
H_\mu \ZZ @>h>> \tilde{H}\ZZ @>>> H_W \ZZ \\
@|               @VVV           @VcVV    \\
H_\mu \ZZ @>2>> H_\mu \ZZ @>>> H\mu \ZZ/2,
\end{CD}
\end{equation*}
where $c$ is the canonical map we are interested in.
A diagram chase (using Lemma \ref{lemm:pi0-HW}) concludes that $\ul{\pi}_1(c)_1$
is an isomorphism as claimed.
\end{proof}

Recall that the motivic Hopf map $\eta: \mathbb{A}^2 \setminus 0 \to
\mathbb{P}^1$ induces a stable map of the same name $\eta: \Gm \to S$
\cite[Section 6.2]{morel-trieste}. For $E
\in \SH(k)$ we write $E/\eta$ for a cone on the map $\id \wedge \eta: E \wedge
\Gm \to E \wedge S \wequi E$. We denote by $\tau: H_\mu\ZZ/2[1] \to
H_\mu\ZZ/2 \wedge \Gm$ the unique non-zero map \cite[top of p. 33]{hoyois2013motivic}.

\begin{lemm} \label{lemm:HW-mod-eta}
There is an isomorphism $H_W \ZZ / \eta \wequi H_\mu \ZZ/2 \oplus H_\mu
\ZZ/2[2]$, such that $H_W \ZZ \to H_\mu \ZZ/2$ is the canonical map, and the boundary
map $H_\mu \ZZ/2[2] \to H_W \ZZ \wedge \Gm[1]$ has the property that the
composite
\[ H_\mu \ZZ/2[2] \to H_W \ZZ \wedge \Gm[1] \to H_\mu \ZZ/2 \wedge \Gm[1] \]
is $\tau[1]$.
\end{lemm}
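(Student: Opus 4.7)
My plan is to exhibit the claimed decomposition by constructing two projection maps $H_W\ZZ/\eta \to H_\mu\ZZ/2$ and $H_W\ZZ/\eta \to H_\mu\ZZ/2[2]$ and then verifying that together they induce an equivalence. The first projection comes essentially for free: since $\ul{K}^M_* = \ul{K}^{MW}_*/\eta$, the homotopy module $\ul{K}^M_*$ is killed by $\eta$, so $\eta$ acts as zero on $H_\mu\ZZ$ and hence on the $H_\mu\ZZ$-module $H_\mu\ZZ/2$. Consequently the canonical map $c : H_W\ZZ \to H_\mu\ZZ/2$ is $\eta$-trivial and extends uniquely along $H_W\ZZ \to H_W\ZZ/\eta$ to a map $\bar c : H_W\ZZ/\eta \to H_\mu\ZZ/2$, which is the first claimed projection.

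To verify that $\bar c$ is an isomorphism on $\ul{\pi}_0(\bullet)_*$, I would invoke the Milnor conjecture on quadratic forms (this is where the hypothesis that the characteristic is different from two enters): it identifies $\ul{I}^n/\ul{I}^{n+1} \cong \ul{K}^M_n/2$. Combined with the classical facts that $\ul{K}^W_n = \ul{I}^n$ for $n \ge 0$ and $\ul{K}^W_n = \ul{W}$ for $n < 0$, this shows that $\eta : \ul{K}^W_{n+1} \to \ul{K}^W_n$ is injective with cokernel $\ul{K}^M_n/2$. The long exact sequence associated to $H_W\ZZ \wedge \Gm \xrightarrow{\eta} H_W\ZZ \to H_W\ZZ/\eta$ (together with $\ul{\pi}_{-1}(H_W\ZZ)_* = 0$, which holds since $H_W\ZZ \in \SH(k)^\eff_{\ge 0}$) then yields $\ul{\pi}_0(H_W\ZZ/\eta)_* = \ul{K}^M_*/2$, and an inspection of the construction shows that $\bar c$ realises this identification.

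For the second summand inclusion $H_\mu\ZZ/2[2] \to H_W\ZZ/\eta$, my approach is to smash the two triangles of Theorem \ref{thm:main} describing $\tilde s_0 \KO$ with $S/\eta$. Using $\eta = 0$ on $H_\mu\ZZ$ and $H_\mu\ZZ/2$, triangle \eqref{eq:s0-decomp-1} becomes
\[ H_\mu\ZZ/2[1] \oplus H_\mu\ZZ/2 \wedge \Gm[2] \to (\tilde s_0 \KO)/\eta \to \tilde H\ZZ/\eta, \]
while triangle \eqref{eq:s0-decomp-2} becomes
\[ \Gm \wedge H_W\ZZ/\eta \to (\tilde s_0 \KO)/\eta \to H_\mu\ZZ \oplus H_\mu\ZZ \wedge \Gm[1]. \]
Combined with the Wood triangle $\Gm \wedge \KO \xrightarrow{\eta} \KO \to KGL$ (which gives $\KO/\eta \wequi KGL$) and the identification $\tilde s_0 KGL \wequi H_\mu\ZZ$, this web of triangles should pin down $H_W\ZZ/\eta$ sufficiently to produce the required second projection map.

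The main obstacle will be passing from homotopy-module information to an actual direct-sum decomposition of spectra, since Postnikov-type extensions do not split automatically. I would handle this by arguing via the universal properties of the two projection maps just constructed, invoking Proposition \ref{prop:effective-t-structure}(1) to verify that their sum is an equivalence by checking on all $\ul{\pi}_i(\bullet)_0$. Finally, to identify the composite $H_\mu\ZZ/2[2] \to H_W\ZZ \wedge \Gm[1] \to H_\mu\ZZ/2 \wedge \Gm[1]$ as $\tau[1]$, I would use that $\tau$ is characterized as the unique non-zero map $H_\mu\ZZ/2[1] \to H_\mu\ZZ/2 \wedge \Gm$: it then suffices to check that the composite is non-zero, which should reduce to the non-vanishing of $\tau$ itself.
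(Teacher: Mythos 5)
Your proposal takes a genuinely different route from the paper, but it has several concrete gaps at exactly the points where the hard work lies.

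The paper's proof observes that $H_W\ZZ/\eta \wequi s_0(H_W\ZZ)$ lives in $\DM^\eff$, computes all the homotopy sheaves $\ul{\pi}_i(H_W\ZZ/\eta)_0$ (namely $\ZZ/2$ for $i\in\{0,2\}$ and $0$ otherwise) directly from Lemma \ref{lemm:HW-wt-one}, identifies the truncations $(H_W\ZZ/\eta)_{\ge 1}\wequi H_\mu\ZZ/2[2]$ and $(H_W\ZZ/\eta)_{\le 0}\wequi H_\mu\ZZ/2$ using the description of the heart of $\DM^\eff$ (this is one place where $\mathrm{char}\,k\ne 2$ enters), and then splits the Postnikov triangle via the vanishing $\Hom_{\DM^\eff}(H_\mu\ZZ/2,H_\mu\ZZ/2[3])=0$. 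That last vanishing is the actual mechanism converting homotopy-sheaf information into a direct-sum decomposition, and your proposal supplies no substitute for it. Your first projection $\bar c$ is fine (the factorisation through $H_W\ZZ/\eta$ follows from $\eta$-triviality, though ``$\eta$ acts as zero on $H_\mu\ZZ$'' deserves its own justification, since it is not simply a consequence of $\ul K^M_*$ being $\eta$-torsion as a homotopy module). But the second projection $H_W\ZZ/\eta\to H_\mu\ZZ/2[2]$ is never constructed: the ``web of triangles'' paragraph does not produce a map, and even to see that $\ul{\pi}_2(H_W\ZZ/\eta)_0\cong\ZZ/2$ you need something like Lemma \ref{lemm:HW-wt-one}, which your proposal does not invoke.

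The final $\tau$-identification is also not correct as stated. Once the motivic Steenrod algebra computation reduces the options for the composite to $0$ or $\tau[1]$, the remaining task is to show that this particular composite is nonzero; this is a statement about $H_W\ZZ/\eta$, not about $\tau$, and ``reduce to the non-vanishing of $\tau$ itself'' is circular. The paper carries out this non-vanishing check by a diagram chase comparing the $\eta$-cofiber triangles for $H_W\ZZ$ and $H_\mu\ZZ/2$, using $\ul{\pi}_1(H_W\ZZ)_0=0$, $\ul{\pi}_1(H_W\ZZ\wedge\Gm)_0=\ZZ/2$, and the last sentence of Lemma \ref{lemm:HW-wt-one}; some step of that sort is required and is missing from your write-up.
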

\begin{proof}
It follows from the long exact sequence of homotopy sheaves and Lemma
\ref{lemm:HW-wt-one} that $\ul{\pi}_i(H_W \ZZ / \eta)_0$ is given by $\ZZ/2$ if $i
\in \{0, 2\}$ and is zero otherwise. Consider the triangle
\[(H_W \ZZ/\eta)_{\ge 1} \to H_W \ZZ/\eta \to  (H_W \ZZ/\eta)_{\le 0}. \]
We have $H_W \ZZ/\eta = s_0(H_W \ZZ)$ (see also
Proposition \ref{prop:slices-HW}(2)) and so $H_W \ZZ/\eta$ is an \emph{effective
motive}. We may consider the above triangle as coming from the homotopy
$t$-structure on $\DM^\eff$.
Since the heart of the category of effective motives
can be modeled as homotopy invariant sheaves with transfers (here we need the
assumption on the characteristic), we
find that $(H_W \ZZ/\eta)_{\ge 1} \wequi H_\mu \ZZ/2[2]$ and $(H_W
\ZZ/\eta)_{\le 0} \wequi H_\mu \ZZ/2$.
Since $\Hom_{\DM^\eff}(H_\mu \ZZ/2, H_\mu \ZZ/2[3]) =
0$ the triangle splits.

The composite $\alpha: H_\mu \ZZ/2[2] \to H_W \ZZ \wedge \Gm[1] \to H_\mu \ZZ/2 \wedge
\Gm[1]$ defines a cohomology operation of weight $(0,1)$. By the computation
of the motivic Steenrod algebra \cite[Theorem 1.1 (1)]{hoyois2013motivic},
this is either $\tau[1]$ or $0$. Here again we use the assumption that
$char(k) \ne 2$.
Consider again the triangle $H_W \ZZ \wedge \Gm
\to H_W \ZZ \to H_W \ZZ/\eta \wequi H_\mu \ZZ/2 \oplus H_\mu \ZZ/2[2]$, and its
morphism to the triangle for $H_\mu\ZZ/2 /\eta$. Since $\ul{\pi}_1(H_W \ZZ)_0 =
0$ and $\ul{\pi}_1(H_W \ZZ \wedge \Gm) = \ZZ/2$, the boundary map $\ZZ/2 =
\ul{\pi}_2(\ZZ/2[2])_0 \to \ul{\pi}_1(H_W \ZZ \wedge \Gm)_0$ must be an isomorphism.
Since also $\ul{\pi}_1(H_W \ZZ \wedge \Gm)_0 \to \ul{\pi}_1(H_\mu \ZZ/2 \wedge
\Gm)_0$ is an isomorphism by the last sentence of Lemma \ref{lemm:HW-wt-one}, we
conclude that $\alpha$ is not the zero map. This was to be shown.
\end{proof}

We will use the following easy fact about ``split triangles''.
\begin{lemm}\label{lemm:triang-split-trick}
Let $\mathcal{C}$ be a triangulated category and
\[ A \to B \oplus C \to D \]
a  triangle. Then $A \to B$ is an isomorphism if and only if $C \to D$ is an isomorphism.

In particular if $0 \to A \to B \oplus C \to D \to 0$ is an exact sequence in an
abelian category, then $A \to B$ is an isomorphism if and only if $C \to D$ is
an isomorphism.
\end{lemm}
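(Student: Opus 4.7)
The plan is to reduce to a diagram chase on long exact sequences of Hom groups and then invoke Yoneda. Write the distinguished triangle as $A \xrightarrow{u} B \oplus C \xrightarrow{v} D \xrightarrow{w} A[1]$, where $u = (f, g')$ and $v = (h', h)$, so that $f : A \to B$ and $h : C \to D$ are the maps of interest (projection/inclusion composites). For any object $X$, applying $[X,-]$ yields a long exact sequence in which the map $u_*[1] : [X, A[1]] \to [X, B[1]] \oplus [X, C[1]]$ appears, and its first component is $f_*[1]$. This will be the key lever.

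First I would assume $f$ is an isomorphism and deduce that $h$ is. Because $f_*[1]$ is then an isomorphism for every $X$, the map $u_*[1]$ is injective, and exactness forces $w_* : [X, D] \to [X, A[1]]$ to vanish. Hence $v_*$ surjects onto $[X, D]$. Given $\delta \in [X, D]$, write $\delta = h'\beta + h\gamma$; using $f$ iso, let $\alpha \in [X, A]$ be the unique preimage of $\beta$ under $f_*$, and correct by subtracting $u_*\alpha$: then $(\beta, \gamma) - u_*\alpha = (0, \gamma - g'\alpha)$, whence $v_*(0, \gamma - g'\alpha) = h(\gamma - g'\alpha) = \delta$, so $h_*$ is surjective. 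For injectivity: if $h\gamma = 0$ then $(0,\gamma) \in \ker v_* = \operatorname{im} u_*$, say $(0,\gamma) = u_*\alpha$; but $f\alpha = 0$ forces $\alpha = 0$ and hence $\gamma = 0$. Yoneda concludes that $h$ is an isomorphism. The converse direction is formally dual, obtained either by applying $[-, X]$ or by rotating the triangle and running the same argument with the roles of $(A, B)$ and $(C, D)$ interchanged.

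The abelian-category version is then immediate: a short exact sequence $0 \to A \to B \oplus C \to D \to 0$ determines a distinguished triangle in the bounded derived category of the ambient abelian category, and the triangulated statement applies. Alternatively the identical diagram chase can be carried out inside the abelian category, with no use of the derived category needed. I do not expect any real obstacle here; the only conceptual point is the observation that once $f$ is an isomorphism the next boundary in the long exact sequence has an injective first component, which forces $w_*$ to vanish on all probes $X$ and effectively splits the triangle into its two summand pieces.
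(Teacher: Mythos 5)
Your argument is correct and follows essentially the same route as the paper's: apply $[X,-]$ to the triangle, observe that the isomorphism on one component kills the relevant boundary map in the long exact sequence, chase the resulting short exact sequence of abelian groups, and conclude by Yoneda. The paper proves the implication in the other direction first and disposes of the converse by self-duality of the triangulated axioms, which corresponds to your contravariant $[-,X]$ alternative; your secondary suggestion of ``rotating the triangle'' does not by itself produce a new triangle of the required form $X \to Y \oplus Z \to W$ with roles swapped, so the duality/contravariant route is the one to rely on.
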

\begin{proof}
Since the axioms of triangulated categories are self dual, it suffices to show
one implication. Thus suppose that $C \to D$ is an isomorphism.

Let $E \in \mathcal{C}$. Consider the long exact sequence
\[ \dots \to [E, D[-1]] \xrightarrow{\partial}
  [E, A] \to [E, B] \oplus [E, C] \to [E, D]
  \xrightarrow{\partial} [E, A[1]] \to \dots. \]
Since $C \to D$ is an isomorphism $[E, C[i]] \to [E, D[i]]$ is surjective, and
thus $\partial \equiv 0$.
We hence get a short exact sequence
\[ 0 \to [E, A] \xrightarrow{(i, j)} [E, B] \oplus [E, C] \xrightarrow{(p\,q)} [E, D] \to 0. \]
But in such a situation surjectivity of $q$ implies surjectivity of $i$, and
injectivity of $q$ implies injectivity of $i$. Hence $[E, A] \to [E, B]$ is an
isomorphism. Since $E$ was arbitrary, we conclude by the Yoneda lemma.
\end{proof}

\begin{lemm} \label{lemm:HW-pi-ge1}
The canonical map $H_W \ZZ \to H_\mu \ZZ/2$ induces an isomorphism on
$\ul{\pi}_k(\bullet)_n$ for $k \ge 1$ and $n \in \ZZ$.
\end{lemm}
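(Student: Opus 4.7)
The strategy is to analyze the cofiber $C := \mathrm{cofib}(\phi: H_W \ZZ \to H_\mu \ZZ/2)$ using the splitting of $H_W\ZZ/\eta$ from Lemma \ref{lemm:HW-mod-eta} and the key relation $(\phi \wedge \id)\partial_2 = \tau[1]$.

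First I would observe that $\phi \circ \eta = 0$: indeed $\eta$ acts as zero on $\ul{K}^M_*/2 = \ul{\pi}_0(H_\mu\ZZ/2)_*$ (since $\ul{K}^M = \ul{K}^{MW}/\eta$), and by the conservative exact functor of Proposition \ref{prop:more-iheart}(3), this implies $\eta$ acts as zero on $H_\mu\ZZ/2$ itself. Applying the octahedral axiom to $H_W\ZZ \wedge \Gm \xrightarrow{\eta} H_W\ZZ \xrightarrow{\phi} H_\mu\ZZ/2$ (whose composite is null, so its cofiber is $H_\mu\ZZ/2 \oplus H_W\ZZ \wedge \Gm[1]$) yields a distinguished triangle
\[ H_W\ZZ/\eta \xrightarrow{\alpha} H_\mu\ZZ/2 \oplus H_W\ZZ \wedge \Gm[1] \to C, \]
where the first component of $\alpha$ is the canonical factorization of $\phi$ through $H_W\ZZ/\eta$ (a consequence of $\phi \circ \eta = 0$) and the second is the boundary of the $\eta$-cofiber sequence.

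Substituting the splitting $H_W\ZZ/\eta \wequi H_\mu\ZZ/2 \oplus H_\mu\ZZ/2[2]$ from Lemma \ref{lemm:HW-mod-eta}, the map $\alpha$ takes a block lower-triangular form with identity in the upper-left entry (by the canonical-map clause of Lemma \ref{lemm:HW-mod-eta}). A change of basis in the target absorbs the off-diagonal boundary term, so that $\alpha$ becomes $\id \oplus \partial_2$, and hence $C \wequi \mathrm{cofib}(\partial_2: H_\mu\ZZ/2[2] \to H_W\ZZ \wedge \Gm[1])$. The crucial input from Lemma \ref{lemm:HW-mod-eta} is that $(\phi \wedge \id)\partial_2 = \tau[1]$.

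The desired isomorphism on $\ul{\pi}_k(\bullet)_n$ for $k \ge 1$ translates, via the long exact sequence of the cofiber of $\phi$, into the vanishing of $\ul{\pi}_k(C)_n$ for $k \ge 2$ together with the identification $\ul{\pi}_1(C)_n \iso \ker(\phi_{*,0})$. Using $C \wequi \mathrm{cofib}(\partial_2)$, both assertions become statements about the map $\partial_{2*}$ appearing in
\[ \ul{\pi}_{k-2}(H_\mu\ZZ/2)_n \xrightarrow{\partial_{2*}} \ul{\pi}_{k-1}(H_W\ZZ)_{n+1} \to \ul{\pi}_k(C)_n \to \ul{\pi}_{k-3}(H_\mu\ZZ/2)_n \xrightarrow{\partial_{2*}} \ul{\pi}_{k-2}(H_W\ZZ)_{n+1}. \]
I would proceed by induction on $k$, with base case $k = 1$ at weight $1$ being Lemma \ref{lemm:HW-wt-one} and other base-case weights handled via contractions of homotopy modules and the connectivity of $H_\mu\ZZ/2$. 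In the induction step, the hypothesis that $\phi_*$ is iso on $\ul{\pi}_j$ for $1 \le j < k$ makes $(\phi \wedge \id)_*$ iso on the relevant $\ul{\pi}_{k-1}, \ul{\pi}_{k-2}$, and the relation $(\phi \wedge \id)\partial_2 = \tau[1]$ identifies $\partial_{2*}$ (up to these isos) with the $\tau$-multiplication $\tau_*: \ul{\pi}_{k-2}(H_\mu\ZZ/2)_n \to \ul{\pi}_{k-1}(H_\mu\ZZ/2)_{n+1}$. The main obstacle is then establishing the required surjectivity and injectivity of $\tau$-multiplication at the relevant bidegrees: this is precisely where the motivic Steenrod algebra computation and the Milnor / Beilinson--Lichtenbaum theorems enter, as the introduction foreshadows.
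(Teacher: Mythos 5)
Your structural reduction is essentially sound: the octahedral argument identifying $C := \mathrm{cofib}(\phi)$ with $\mathrm{cofib}(\partial_2)$ via the splitting of $H_W\ZZ/\eta$ is a valid repackaging, and the $\tau$-relation from Lemma \ref{lemm:HW-mod-eta} together with Milnor/Beilinson--Lichtenbaum is indeed the engine of the proof, exactly as in the paper. But there are two problems.

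First, a small one: your justification that $\phi\circ\eta = 0$ is off. You deduce that $\eta$ acts as zero on the spectrum $H_\mu\ZZ/2$ from the fact that it acts as zero on the homotopy module $\ul{\pi}_0(H_\mu\ZZ/2)_*$, citing the conservative functor of Proposition \ref{prop:more-iheart}(3). That functor lives on the heart $\SH(k)^{\eff,\heart}$ and detects vanishing of objects and morphisms there; it says nothing about a map of spectra such as $\eta: H_\mu\ZZ/2\wedge\Gm \to H_\mu\ZZ/2$, whose source and target have nonzero higher homotopy sheaves. The correct (and standard) reason $\eta$ kills $H_\mu\ZZ/2$ is that $H_\mu\ZZ/2$ is an $H_\mu\ZZ$-module (equivalently, a slice, equivalently, an oriented theory), and $\eta$ acts as zero on $H_\mu\ZZ$.

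Second, and more seriously: your induction on $k$ does not close. To run the inductive step for $\ul{\pi}_k(C)_n$ with $k\ge 2$ you need $\phi_*$ to be an isomorphism on $\ul{\pi}_{k-1}(\bullet)_{n+1}$ for \emph{all} weights $n$; in particular, for $k=2$ you need the full $k=1$ statement at every weight. But your proposed base case only establishes $\phi_*$ iso on $\ul{\pi}_1(\bullet)_n$ for $n\le 1$: Lemma \ref{lemm:HW-wt-one} gives $n=1$, and contractions of homotopy modules only propagate \emph{downward} in weight (knowing $\ul{\pi}_1(\bullet)_{n+1}$ gives $\ul{\pi}_1(\bullet)_n$ by applying $(-)_{-1}$, not the reverse), while connectivity handles $n\le 0$. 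For $n\ge 2$ nothing is established, and it cannot be established independently: the cofiber-of-$\partial_2$ long exact sequence at weight $n$ expresses $\ul{\pi}_1(H_W\ZZ)_{n+1}$ in terms of $\partial_{2*}$ and $\ul{\pi}_2(C)_n$, which is exactly what you are trying to control. The paper resolves this by inducting on the weight $n$ rather than on $k$: its long exact sequence for $H_W\ZZ\wedge\Gm \to H_W\ZZ \to H_\mu\ZZ/2\oplus H_\mu\ZZ/2[2]$ expresses $\ul{\pi}_k(H_W\ZZ)_{n+1}$ for \emph{all} $k\ge 1$ in terms of weight-$n$ data, where the inductive hypothesis applies, and the base case $n\le 0$ is trivially true since both sides vanish in positive $\pi$-degree and nonpositive weight. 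To repair your argument you would need to switch to this weight induction; the intermediate introduction of $C$ and $\mathrm{cofib}(\partial_2)$ is then an unnecessary detour, since the relevant exact sequence already couples weights $n$ and $n+1$ in the right direction.
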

\begin{proof}
Consider the long exact sequence of homotopy sheaves associated with the
triangle $H_W \ZZ \wedge \Gm \to H_W \ZZ \to H_\mu \ZZ/2 \oplus H_\mu \ZZ/2[2]$:
\begin{gather*}
\dots \to \ul{\pi}_{k+1}(H_W \ZZ)_n
 \to \ul{\pi}_{k+1} (H_\mu \ZZ/2)_n \oplus \ul{\pi}_{k-1}(H_\mu \ZZ/2)_n
 \to \ul{\pi}_k(H_W \ZZ)_{n+1} \\
 \to \ul{\pi}_k(H_W \ZZ)_n
 \to \ul{\pi}_k(H_\mu \ZZ/2)_n \oplus \ul{\pi}_{k-2}(H_\mu \ZZ/2)_n
 \to \dots
\end{gather*}
We shall use induction on $n$. If $n=0$ (or $n \le 0$) the claim is clear. We may assume by
induction that $\ul{\pi}_{k}(H_W \ZZ)_n \to \ul{\pi}_{k} (H_\mu \ZZ/2)_n$ is
an isomorphism for $k \ge 1$. It follows that (still for $k \ge 1$) we get short
exact sequences
\[ 0 \to \ul{\pi}_{k+1}(H_W \ZZ)_n
   \to \ul{\pi}_{k+1} (H_\mu \ZZ/2)_n \oplus \ul{\pi}_{k-1}(H_\mu \ZZ/2)_n
   \to \ul{\pi}_k(H_W \ZZ)_{n+1} \to 0. \]
Lemma \ref{lemm:triang-split-trick} now implies that $\ul{\pi}_{k-1}(H_\mu
\ZZ/2)_n \to \ul{\pi}_k(H_W \ZZ)_{n+1}$ is an isomorphism. By the last
part of Lemma \ref{lemm:HW-mod-eta} the composite
\[ \ul{\pi}_{k-1}(H_\mu \ZZ/2)_n \to \ul{\pi}_k(H_W \ZZ)_{n+1}
   \to \ul{\pi}_k(H_\mu \ZZ/2)_{n+1} \]
(where the last map is the canonical one) is $\tau$ and hence is an isomorphism
for $k \ge 1$ by Voevodsky's resolution of the Milnor conjectures \cite{voevodsky2003motivic}
(which implies that $H^{*,*}(k, \ZZ/2) = K_*^M/2[\tau]$).

It follows that $\ul{\pi}_k(H_W \ZZ)_{n+1} \to \ul{\pi}_k(H_\mu \ZZ/2)_{n+1}$
must also be an isomorphism. This concludes the induction and hence the proof.
\end{proof}

\begin{proof}[of Theorem \ref{thm:morel-conjecture}]
It remains to prove the claim about $\ul{\pi}_i(\tilde{H}\ZZ)_*$ for $i > 0$.
The exact sequence $0 \to \ul{K}_*^{MW} \to \ul{K}_*^M \oplus \ul{K}_*^W \to
\ul{K}_*^M/2 \to 0$ \cite[Théorème 5.3]{morel2004puissances} induces a  triangle
\[ \tilde{H} \ZZ \to H_W \ZZ \oplus H_\mu \ZZ \to H_\mu \ZZ/2. \]
By Lemma \ref{lemm:pi0-HW}, the map $\ul{\pi}_0(\tilde{H} \ZZ)_* \to \ul{\pi}_0 ( H_W \ZZ \oplus H_\mu
\ZZ)_*$ is just the canonical map $\ul{K}_*^{MW} \to \ul{K}_*^M \oplus
\ul{K}_*^W$ and thus injective. Hence by Lemma \ref{lemm:triag-trunc} we get a
 triangle
\[ (\tilde{H} \ZZ)_{\ge 1} \to (H_W \ZZ)_{\ge 1} \oplus (H_\mu \ZZ)_{\ge 1} \to
   (H_\mu \ZZ/2)_{\ge 1}. \]
Since $(H_W \ZZ)_{\ge 1} \to (H_\mu \ZZ/2)_{\ge 1}$ is an equivalence by Lemma
\ref{lemm:HW-pi-ge1}, we find that $(\tilde{H} \ZZ)_{\ge 1} \to (H_\mu \ZZ)_{\ge
1}$ is an equivalence by Lemma \ref{lemm:triang-split-trick}. This concludes the
proof.
\end{proof}

\paragraph{The Slices of $\tilde{H}\ZZ$ and $H_W \ZZ$.}
A minor extension of part the argument of the above proof also yields the
following.

\begin{prop} \label{prop:slices-HW}
\begin{enumerate}[(1)]
\item The canonical map $\tilde{H} \ZZ \to H_W \ZZ$ induces an isomorphism on $f_n$
and $s_n$ for $n \ge 1$.
\item For $n \ge 0$ we have $f_n H_W \ZZ \wequi
\Gmp{n} H_W \ZZ$.
\item We have $s_0 \tilde{H}\ZZ \wequi H_\mu \ZZ \oplus
H_\mu \ZZ/2[2]$, and for $n \ge 0$ we have $s_n H_W \ZZ \wequi \Gmp{n}
(H_\mu \ZZ/2 \oplus H_\mu \ZZ/2[2])$.
\end{enumerate}
\end{prop}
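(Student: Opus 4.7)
My plan is to prove (2) first, since it is the key computational input, and then derive (1) and (3) by formal manipulations of triangles. The base case $n=0$ of (2) is immediate because $H_W \ZZ$ is effective by construction. For the inductive step, I would apply $f_n$ (with $n \ge 1$) to the cofibre triangle
\[ \Gm \wedge H_W \ZZ \xrightarrow{\eta} H_W \ZZ \to H_W \ZZ/\eta, \]
invoking Lemma \ref{lemm:HW-mod-eta} to identify $H_W \ZZ/\eta \wequi H_\mu \ZZ/2 \oplus H_\mu \ZZ/2[2]$. Since $H_\mu \ZZ$ is a zero-slice, so is $H_\mu \ZZ/2$ (as $f_n$ commutes with the cone of multiplication by $2$), and in particular $f_n(H_W \ZZ/\eta) = 0$ for $n \ge 1$. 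The triangle therefore yields $f_n(\Gm \wedge H_W \ZZ) \xrightarrow{\iso} f_n H_W \ZZ$. Using Lemma \ref{lemm:fn-T-smash} together with $T \wequi S^1 \wedge \Gm$ and the fact that $f_n$ commutes with $S^1$-suspension, one obtains $f_n(\Gm \wedge H_W \ZZ) \wequi \Gm \wedge f_{n-1} H_W \ZZ$, and the inductive hypothesis completes the argument.

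Granted (2), the second claim of (3) is automatic: the defining triangle $\Gmp{n+1} H_W \ZZ \to \Gmp{n} H_W \ZZ \to s_n H_W \ZZ$ becomes, after the identifications of (2), nothing but $\Gmp{n}$ applied to $\Gm \wedge H_W \ZZ \to H_W \ZZ \to H_W \ZZ/\eta$, so that $s_n H_W \ZZ \wequi \Gmp{n}(H_\mu \ZZ/2 \oplus H_\mu \ZZ/2[2])$. For (1), I would start from the triangle $H_\mu \ZZ \xrightarrow{h} \tilde H \ZZ \to H_W \ZZ$ obtained by applying the triangulated functor $f_0$ to the short exact sequence $0 \to \ul{K}^M_* \xrightarrow{h} \ul{K}^{MW}_* \to \ul{K}^W_* \to 0$ of homotopy modules. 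Because $H_\mu \ZZ \wequi s_0 S$, applying $f_n$ (for $n \ge 1$) to the triangle $f_1 S \to S \to H_\mu \ZZ$ and using $f_n f_1 S \wequi f_n S$ gives $f_n H_\mu \ZZ = 0$. The triangle above then shows that $f_n \tilde H \ZZ \to f_n H_W \ZZ$, and hence also $s_n \tilde H \ZZ \to s_n H_W \ZZ$, are isomorphisms for $n \ge 1$.

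For the first half of (3), I would apply $s_0$ to the triangle $\tilde H \ZZ \to H_W \ZZ \oplus H_\mu \ZZ \to H_\mu \ZZ/2$ used in the proof of Theorem \ref{thm:morel-conjecture}, obtaining
\[ s_0 \tilde H \ZZ \to (H_\mu \ZZ/2 \oplus H_\mu \ZZ/2[2]) \oplus H_\mu \ZZ \xrightarrow{\phi} H_\mu \ZZ/2, \]
where I have substituted $s_0 H_W \ZZ \wequi H_\mu \ZZ/2 \oplus H_\mu \ZZ/2[2]$ from the second half of (3) just proved. The essential point is that, by the last part of Lemma \ref{lemm:HW-mod-eta}, the induced map $s_0 c_W : s_0 H_W \ZZ \to H_\mu \ZZ/2$ is precisely the projection onto the first summand. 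Consequently $\phi$ admits the obvious section sending $x \in H_\mu \ZZ/2$ to $(x,0,0)$, the triangle splits, and $s_0 \tilde H \ZZ \wequi H_\mu \ZZ \oplus H_\mu \ZZ/2[2]$.

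The most delicate step is this final identification of $s_0 c_W$ with projection: everything else reduces to formal manipulations of triangles, invocations of Lemma \ref{lemm:fn-T-smash}, and the vanishing of $f_n$ on motivic cohomology with integer and mod-$2$ coefficients. This last splitting is where the concrete content of Lemma \ref{lemm:HW-mod-eta} (which in turn depended on the motivic Steenrod algebra and the Milnor conjectures) enters decisively into the slice computation.
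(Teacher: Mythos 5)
Your proposal is correct in substance, but in parts (1) and (2) it takes a genuinely different route from the paper, and there are two small points worth flagging.

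For part (2), the paper argues directly: by Lemma \ref{lemm:fn-T-smash}, $f_n H_W\ZZ \wequi \Gmp{n} \wedge f_0(H_W\ZZ \wedge \Gmp{-n})$, and then one shows that $\eta^n\colon H_W\ZZ \to H_W\ZZ \wedge \Gmp{-n}$ induces an isomorphism on all $\ul{\pi}_i(\bullet)_0$ (for $i=0$ this is $\eta$ acting on Witt $K$-theory; for $i\ne 0$ both sides vanish because $\ul{\pi}_i(H_W\ZZ)_0=0$ in the effective heart and the contraction of a zero sheaf is zero), so $f_0(\eta^n)$ is an equivalence by conservativity (Proposition \ref{prop:effective-t-structure}(1)). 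Your inductive argument via the triangle $\Gm \wedge H_W\ZZ \to H_W\ZZ \to H_W\ZZ/\eta$ and the cofibre identification from Lemma \ref{lemm:HW-mod-eta} reaches the same conclusion, and the manipulation with $T \wequi S^1\wedge\Gm$ is fine. However, be careful about a latent circularity: the proof of Lemma \ref{lemm:HW-mod-eta} uses the identification $s_0 H_W\ZZ \wequi H_W\ZZ/\eta$, which is exactly the $n=1$ case of (2). You would need to supply the short direct argument (essentially the paper's conservativity argument restricted to $n=1$) before invoking the lemma in your induction step; as written, your base case $n=0$ alone does not feed the lemma.

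For part (1) your route is actually slicker than the paper's. You use the triangle $H_\mu\ZZ \xrightarrow{h} \tilde{H}\ZZ \to H_W\ZZ$ together with the fact that $H_\mu\ZZ$ is a zero slice (hence $f_n H_\mu\ZZ = 0$ for $n\ge 1$), which is a purely formal argument. The paper instead deduces (1) from the computation of homotopy sheaves in Theorem \ref{thm:morel-conjecture}: the maps $\ul{\pi}_i(\tilde{H}\ZZ)_{-n} \to \ul{\pi}_i(H_W\ZZ)_{-n}$ are isomorphisms for $n\ge 1$ (degree $0$ because $\eta$ kills $h$, degree $\ne 0$ because motivic cohomology vanishes in negative weights), hence $f_n$ of the map is an equivalence. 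Your argument buys independence from the full strength of the structure theorem.

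For (3) the two arguments agree in essence: compute $s_n H_W\ZZ$ from (2) and Lemma \ref{lemm:HW-mod-eta}, then use the triangle $\tilde{H}\ZZ \to H_W\ZZ \oplus H_\mu\ZZ \to H_\mu\ZZ/2$ to get $s_0\tilde{H}\ZZ$. The paper invokes Lemma \ref{lemm:triang-split-trick} at this last step; you instead produce a section of the map to $H_\mu\ZZ/2$, which is equivalent. One small gap: when you assert that $s_0(c_W)\colon s_0 H_W\ZZ \to H_\mu\ZZ/2$ is the projection onto the first summand, the content of Lemma \ref{lemm:HW-mod-eta} only tells you that the composite $H_W\ZZ \to H_W\ZZ/\eta \to H_\mu\ZZ/2$ is the canonical map; you still need to observe that this pins down $s_0(c_W)$ uniquely, which it does because $[\Gm\wedge H_W\ZZ[1], H_\mu\ZZ/2]=0$ (the source being $1$-effective and the target a zero slice). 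With those two small points supplied, the proof is correct.
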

\begin{proof}
Since $\ul{\pi}_i(\tilde{H} \ZZ)_{-n} \to \ul{\pi}_i(H_W \ZZ)_{-n}$ is an
isomorphism for $n \ge 1$, claim (1) is clear. For claim (2), it follows from
Lemma \ref{lemm:fn-T-smash} that $f_n H_W \ZZ \wequi \Gmp{n} f_0(H_W \ZZ
\wedge \Gmp{-n})$.
But $\eta^n: H_W \ZZ \to H_W \ZZ \wedge \Gm{-n}$ induces an isomorphism
on $\ul{\pi}_i(\bullet)_0$ for $n \ge 0$, so $f_0(H_W \ZZ
\wedge \Gmp{-n}) \wequi f_0(H_W \ZZ) = H_W \ZZ$.

To prove (3), using the  triangle $\tilde{H} \ZZ \to H_W \ZZ
\oplus H_\mu \ZZ \to H_\mu \ZZ/2$ one finds using Lemma \ref{lemm:triang-split-trick} that the
claim for $s_0 \tilde{H} \ZZ$ reduces to the one for $H_W \ZZ$. By (2), $s_n
H_W \ZZ \wequi (\Gmp{n+1} H_W \ZZ)/\eta \wequi \Gmp{n+1}(H_W
\ZZ/\eta)$, and $H_W \ZZ / \eta \wequi H_\mu \ZZ/2 \oplus H_\mu \ZZ/2[2]$ by
Lemma \ref{lemm:HW-mod-eta}.
\end{proof}

\bibliographystyle{plainc}
\bibliography{bibliography}

\begin{thebibliography}{10}

\bibitem{beilinson1982faisceaux}
Alexander~A Beilinson, Joseph Bernstein, and Pierre Deligne.
\newblock Faisceaux pervers. Analysis and topology on singular spaces, I
  (Luminy, 1981), 5--171.
\newblock {\em Ast{\'e}risque}, 100, 1982.

\bibitem{calmes2014finite}
Baptiste Calm{\`e}s and Jean Fasel.
\newblock The category of finite {C}how-{W}itt correspondences.
\newblock {\em arXiv preprint arXiv:1412.2989}, 2014.
\newblock \href{https://arxiv.org/abs/1412.2989}{arXiv:1412.2989}.

\bibitem{garkusha2012motivic}
Grigory Garkusha and Ivan Panin.
\newblock On the motivic spectral sequence.
\newblock {\em Journal of the Institute of Mathematics of Jussieu}, pages
  1--34, 2012.

\bibitem{garkusha2015homotopy}
Grigory Garkusha and Ivan Panin.
\newblock Homotopy invariant presheaves with framed transfers.
\newblock {\em arXiv preprint arXiv:1504.00884}, 2015.

\bibitem{grayson2005motivic}
Daniel~R Grayson.
\newblock The motivic spectral sequence.
\newblock In {\em Handbook of K-theory}, pages 39--69. Springer, 2005.

\bibitem{hornbostel2005a1}
Jens Hornbostel.
\newblock A1-representability of Hermitian K-theory and Witt groups.
\newblock {\em Topology}, 44(3):661--687, 2005.

\bibitem{hoyois-equivariant}
Marc Hoyois.
\newblock The six operations in equivariant motivic homotopy theory.
\newblock {\em Advances in Mathematics}, 305:197--279, 2017.

\bibitem{hoyois2013motivic}
Marc Hoyois, Shane Kelly, and Paul~Arne {\O}stv{\ae}r.
\newblock The motivic Steenrod algebra in positive characteristic.
\newblock {\em to appear in J.Eur.Math.Soc.}

\bibitem{levine2008homotopy}
Marc Levine.
\newblock The homotopy coniveau tower.
\newblock {\em Journal of Topology}, 1(1):217--267, 2008.

\bibitem{lurie-ha}
Jacob Lurie.
\newblock Higher Algebra, May 2016.

\bibitem{lecture-notes-mot-cohom}
Carlo Mazza, Vladimir Voevodsky, and Charles Weibel.
\newblock {\em Lecture notes on motivic cohomology}.
\newblock American Mathematical Soc., 2006.

\bibitem{morel-friedlander-milnor}
Fabien Morel.
\newblock On the Friedlander-Milnor conjecture for groups of small rank.

\bibitem{morel-trieste}
Fabien Morel.
\newblock An introduction to $\mathbb{A}^1$-homotopy theory.
\newblock {\em ICTP Trieste Lecture Note Ser. 15}, pages 357--441, 2003.

\bibitem{morel2004motivic-pi0}
Fabien Morel.
\newblock On the motivic $\pi_0$ of the sphere spectrum.
\newblock In {\em Axiomatic, enriched and motivic homotopy theory}, pages
  219--260. Springer, 2004.

\bibitem{morel2004puissances}
Fabien Morel.
\newblock Sur les puissances de l’id{\'e}al fondamental de l’anneau de
  Witt.
\newblock {\em Commentarii Mathematici Helvetici}, 79(4):689--703, 2004.

\bibitem{morel2005stable}
Fabien Morel.
\newblock The stable $\mathbb{A}^1$-connectivity theorems.
\newblock {\em K-theory}, 35(1):1--68, 2005.

\bibitem{A1-alg-top}
Fabien Morel.
\newblock {\em $\mathbb{A}^1$-Algebraic Topology over a Field}.
\newblock Lecture Notes in Mathematics. Springer Berlin Heidelberg, 2012.

\bibitem{A1-homotopy-theory}
Fabien Morel and Vladimir Voevodsky.
\newblock $\mathbb{A}^1$-homotopy theory of schemes.
\newblock {\em Publications Mathématiques de l'Institut des Hautes Études
  Scientifiques}, 90(1):45--143, 1999.

\bibitem{panin2010motivic}
Ivan Panin and Charles Walter.
\newblock On the motivic commutative ring spectrum BO.
\newblock {\em arXiv preprint arXiv:1011.0650}, 2010.

\bibitem{panin2010quaternionic}
Ivan Panin and Charles Walter.
\newblock Quaternionic Grassmannians and Pontryagin classes in algebraic
  geometry.
\newblock {\em arXiv preprint arXiv:1011.0649}, 2010.

\bibitem{rondigs2013slices}
Oliver R{\"o}ndigs and Paul~Arne {\O}stv{\ae}r.
\newblock Slices of hermitian K-theory and Milnor's conjecture on quadratic
  forms.
\newblock {\em To appear in Geometry \& Topology}, 2013.

\bibitem{schlichting2010mayer}
Marco Schlichting.
\newblock The Mayer-Vietoris principle for Grothendieck-Witt groups of schemes.
\newblock {\em Inventiones mathematicae}, 179(2):349--433, 2010.

\bibitem{schlichting2016hermitian}
Marco Schlichting.
\newblock Hermitian K-theory, derived equivalences and Karoubi's fundamental
  theorem.
\newblock {\em Journal of Pure and Applied Algebra}, 2016.

\bibitem{schlichting2015geometric}
Marco Schlichting and Girja~S Tripathi.
\newblock Geometric models for higher Grothendieck--Witt groups in
  $\mathbb{A}^1$-homotopy theory.
\newblock {\em Mathematische Annalen}, 362(3-4):1143--1167, 2015.

\bibitem{spitzweck2012motivic}
Markus Spitzweck and Paul~Arne {\O}stv{\ae}r.
\newblock Motivic twisted K--theory.
\newblock {\em Algebraic \& Geometric Topology}, 12(1):565--599, 2012.

\bibitem{suslin2003grayson}
Andrei Suslin.
\newblock On the Grayson spectral sequence.
\newblock {\em Proceedings of the Steklov Institute of
  Mathematics-Interperiodica Translation}, 241:202--237, 2003.

\bibitem{voevodsky293possible}
V.~Voevodsky.
\newblock A possible new approach to the motivic spectral sequence for
  algebraic K-theory, Recent progress in homotopy theory (Baltimore, MD, 2000)
  371--379.
\newblock {\em Contemp. Math}, 293.

\bibitem{voevodsky-slice-filtration}
V.~Voevodsky.
\newblock Open Problems in the Motivic Stable Homotopy Theory , I.
\newblock In {\em International Press Conference on Motives, Polylogarithms and
  Hodge Theory}. International Press, 2002.

\bibitem{voevodsky2003motivic}
Vladimir Voevodsky.
\newblock Motivic cohomology with $\mathbf{Z}/2$-coefficients.
\newblock {\em Publications Math{\'e}matiques de l'Institut des Hautes
  {\'E}tudes Scientifiques}, 98:59--104, 2003.

\bibitem{voevodsky2003zero}
Vladimir Voevodsky.
\newblock On the zero slice of the sphere spectrum.
\newblock {\em Proc. Steklov Inst. Math.}, 246:93--102, 2004.

\bibitem{walter2003grothendieck}
Charles Walter.
\newblock Grothendieck-Witt groups of triangulated categories.
\newblock {\em preprint}, 2003.

\end{thebibliography}

\end{document}